\newtheorem{thm}{Theorem}[section]
\newtheorem{lemma}[thm]{Lemma}
\newtheorem{prop}[thm]{Proposition}
\newtheorem{cor}[thm]{Corollary}
\theoremstyle{definition}
\newtheorem{defin}[thm]{Definition}
\theoremstyle{remark}
\newtheorem{rem}[thm]{Remark}
\numberwithin{equation}{section}
\newcommand{\cev}[1]{\reflectbox{\ensuremath{\vec{\reflectbox{\ensuremath{#1}}}}}}
\newcommand{\dif}{\mathrm{d}}
\newcommand{\dd}{\mathrm{d}}
\newcommand{\totdif}{\mathrm{D}}
\DeclareMathOperator{\sgn}{sgn}
\newcommand{\mf}{\mathscr{F}}
\newcommand{\mr}{\mathbb{R}}
\newcommand{\R}{\mathbb{R}}
\newcommand{\prst}{\mathbb{P}}
\newcommand{\p}{\mathbb{P}}
\newcommand{\stred}{\mathbb{E}}
\newcommand{\E}{\mathbb{E}}
\newcommand{\ind}{\mathbf{1}}
\newcommand{\mn}{\mathbb{N}}
\DeclareMathOperator{\supp}{supp}
\DeclareMathOperator{\diver}{div}
\DeclareMathOperator*{\esssup}{ess\,sup}
\newcommand{\me}{\mathrm{e}}
\title[Scalar conservation laws with rough flux and stochastic forcing]{Scalar conservation laws with rough flux and stochastic forcing}
\author{Martina Hofmanov\'a}
\address[M. Hofmanov\'a]{Technical University Berlin, Institute of Mathematics, Stra\ss e des 17. Juni 136, 10623 Berlin, Germany}
\email{hofmanov@math.tu-berlin.de}
\begin{document}
\begin{abstract}
In this paper, we study scalar conservation laws where the flux is driven by a geometric H\"older $p$-rough path for some $p\in (2,3)$ and the forcing is given by an It\^o stochastic integral driven by a Brownian motion.
In particular, we derive the corresponding kinetic formulation and define an appropriate notion of kinetic solution. In this context, we are able to establish well-posedness, i.e. existence, uniqueness and the $L^1$-contraction property that leads to continuous dependence on initial condition. Our approach combines tools from rough path analysis, stochastic analysis and theory of kinetic solutions for conservation laws. As an application, this allows to cover the case of flux driven for instance by another (independent) Brownian motion enhanced with L\'evy's stochastic area.
\end{abstract}

\subjclass[2010]{60H15, 35R60, 35L65}

\keywords{scalar conservation laws, rough paths, kinetic formulation, kinetic solution, BGK approximation, method of characteristics}

\date{\today}

\maketitle


\section{Introduction}

The goal of the present paper is to develop a well-posedness theory for the following scalar rough conservation law
\begin{equation}\label{eq}
\begin{split}
\dif u+\diver\big(A(x,u)\big)\,\dif z&=g(x,u)\,\dif W,\qquad t\in(0,T),\,x\in\mr^N,\\
u(0)&=u_0,
\end{split}
\end{equation}
where $z=(z^1,\dots,z^M)$ is a deterministic rough driving signal and $W=(W^1,\dots,W^K)$ is a Wiener process and the stochastic integral is understood in the It\^ o sense. The coefficients $A:\mr^N\times\mr\rightarrow\mr^{N\times M}$, $g:\mr^N\times\mr\rightarrow\mr^K$ satisfy a sufficient regularity assumption introduced in Section \ref{sec:hypotheses}. The above equation can be rewritten by using the Einstein summation convention as follows
\begin{equation*}
\begin{split}
\dif u+\partial_{x_i}\big(A_{ij}(x,u)\big)\,\dif z^j&=g_k(x,u)\,\dif W^k,\qquad t\in(0,T),\,x\in\mr^N,\\
u(0)&=u_0.
\end{split}
\end{equation*}
As an application of our analysis, one can replace $z$, for instance, by another Brownian motion $B$, which is independent of $W$, and give meaning to
\begin{equation}\label{eq:stoch1}
\begin{split}
\dif u+\diver\big(A(x,u)\big)\circ\dif B&=g(x,u)\,\dif W,\qquad t\in(0,T),\,x\in\mr^N,\\
u(0)&=u_0.
\end{split}
\end{equation}


Conservation laws and related equations have been paid an increasing attention lately and have become a very active area of research, counting nowadays quite a number of results for deterministic and stochastic setting, that is for conservation laws either of the form
\begin{equation}\label{eq:det}
\partial_t u+\diver\big(A(u)\big)=0
\end{equation}
(see \cite{vov1}, \cite{car}, \cite{vov}, \cite{kruzk}, \cite{lpt1}, \cite{lions}, \cite{perth}, \cite{tadmor}) or
\begin{equation}\label{eq:stoch}
\dif u+\diver\big(A(u)\big)\dif t=g(x,u)\dif W
\end{equation}
where the It\^o stochastic forcing is driven by a finite- or infinite-dimensional Wiener process (see \cite{bauzet}, \cite{karlsen}, \cite{debus2}, \cite{debus}, \cite{DV}, \cite{feng}, \cite{bgk},  \cite{holden}, \cite{kim},  \cite{stoica}, \cite{wittbold}). Degenerate parabolic PDEs were studied in \cite{car}, \cite{chen} and in the stochastic setting in \cite{BVW}, \cite{degen2}, \cite{hof}.

Since the theory of rough paths viewed as a tool that allows for deterministic treatment of stochastic differential equations has been of growing interest recently, several attempts have already been made to extend this theory to conservation laws as well. First, Lions, Perthame and Souganidis (see \cite{lps}, \cite{lps1}) developed a pathwise approach for
$$\dif u+\diver\big(A(x,u)\big)\circ \dif W=0$$
where $W$ is a continuous real-valued signal and $\circ$ stands for the Stratonovich product in the Brownian case, then Friz and Gess (see \cite{friz2}) studied
$$\dif u+\diver f(t,x,u)\dif t=F(t,x,u)\dif t+\Lambda_k(x,u,\nabla u)\dif z^k$$
where $\Lambda_k$ is affine linear in $u$ and $\nabla u$ and $z=(z^1,\dots,z^K)$ is a rough driving signal and Gess and Souganidis \cite{gess} considered
$$\dif u+\diver\big(A(x,u)\big)\dif z=0$$
where $z=(z^1,\dots,z^M)$ is a geometric $\alpha$-H\" older path and in \cite{GS} they studied the long-time behavior for $z$ being a Brownian motion.



In order to find a suitable concept of solution for problems of the form \eqref{eq}, it was observed already some time ago that, on the one hand, classical $C^1$ solutions do not exist in general and, on the other hand, weak or distributional solutions lack uniqueness.
The first claim is a consequence of the fact that any smooth solution has to be constant along characteristic lines, which can intersect in finite time (even in the case of smooth data) and shocks can be produced. The second claim demonstrates the inconvenience that often appears in the study of PDEs and SPDEs: the usual way of weakening the equation leads to the occurrence of nonphysical solutions and therefore additional assumptions need to be imposed in order to select the physically relevant ones and to ensure uniqueness. Hence one needs to find some balance that allows to establish existence of a unique (physically reasonable) solution.

Towards this end, we pursue the ideas of kinetic approach, a concept of solution that was first introduced by Lions, Perthame, Tadmor \cite{lions} for deterministic hyperbolic conservation laws and further studied in \cite{vov1}, \cite{chen}, \cite{vov}, \cite{lpt1}, \cite{lions}, \cite{tadmor}, \cite{perth}. This direction also appears in several works on stochastic conservation laws and degenerate parabolic SPDEs, see \cite{degen2}, \cite{debus2}, \cite{debus}, \cite{DV}, \cite{bgk}, \cite{hof} and in the (rough) pathwise works \cite{GS}, \cite{gess}, \cite{lps}, \cite{lps1}. In comparison to the notion of entropy solution introduced by Kru\v{z}kov \cite{kruzk} and further developed e.g. in \cite{bauzet}, \cite{car}, \cite{feng}, \cite{kim}, \cite{wittbold}, kinetic solutions are more general in the sense that they are well defined even in situations when neither the original conservation law nor the corresponding entropy inequalities can be understood in the sense of distributions which is part of the definition of entropy solution. Usually this happens due to lack of integrability of the flux and entropy-flux terms, e.g. $A(u)\notin L^1_{\text{loc}}$. Therefore, further assumptions on initial data or the flux function $A$ are in place in order to overcome this issue and remain in the entropy setting. It will be seen later on that no such restrictions are necessary in the kinetic approach as the equation that is actually to be solved -- the so-called kinetic formulation, see \eqref{eq:kinform} -- is in fact linear. In addition, various proofs simplify as methods for linear PDEs are available.

Let us now shortly present the main ideas of our approach. Apart from the above mentioned difficulties there is another one that originates in the low regularity of driving signals and solution. Namely, the corresponding rough integrals are not well defined so we present a formulation, see \eqref{eq:weakkinformul}, that does not include any rough path driven terms and therefore provides a suitable notion of kinetic solution in this context. To this end, we adapt the ideas of \cite{lps}, \cite{lps1}, \cite{gess} where the authors introduced a method of modified test functions to eliminate the rough terms. Our method then combines this approach with the ideas for stochastic conservation laws treated in \cite{debus}, \cite{bgk}.
As usual for this class of problems, we define a second notion of solution -- a generalized kinetic solution -- which, roughly speaking, takes values in the set of Young measures. The general idea is that in order to get existence of such a solution, weak convergence (in some $L^p$ space) of the corresponding approximations is sufficient which allows for an easier proof. The key result is then the Reduction Theorem that states that any generalized kinetic solution is actually a kinetic one, that is, the Young measure at hand is a parametrized Dirac mass.

Concerning the existence part, we make use of the so-called Bhatnagar-Gross-Krook approximation (BGK for short) which allows to describe the conservation law as the hydrodynamic limit of the corresponding BGK model, as the microscopic scale $\varepsilon$ vanishes. This is nowadays a standard tool in the deterministic setting where the literature is quite extensive (see \cite{vov1}, \cite{vov}, \cite{lpt1}, \cite{lions}, \cite{tadmor}, \cite{perth}). Even though the general concept is analogous in the stochastic case, the techniques are significantly different, the result was established recently by the author \cite{bgk}. To be more precise, the key point is to solve the corresponding characteristic system which for the general case of \eqref{eq} reads as follows
\begin{equation}\label{eq:charr}
\begin{split}
\dif\varphi^0_t&= -\partial_{x_i}A_{ij}(\varphi_t)\,\dif z^j+g_k(\varphi_t)\dif W^k,\\
\dif\varphi^i_t&=\partial_u A_{ij}(\varphi_t)\,\dif z^j,\qquad i=1,\dots,N.
\end{split}
\end{equation}
Already at this level one can understand the difficulties coming from the complex structure of \eqref{eq}.
Namely, in the case of \eqref{eq:det} the characteristic system reduces to a set of independent equations (note that the flux function $A$ is independent of the space variable)
\begin{equation*}
\begin{split}
\dif\varphi^0_t&=0\\
\dif\varphi^i_t&=\partial_u A_{ij}(\varphi^0_t)\,\dif t,\qquad i=1,\dots,N,
\end{split}
\end{equation*}
which can be solved immediately and as a consequence the method simplifies. For the stochastic case \eqref{eq:stoch} we obtain
\begin{equation*}
\begin{split}
\dif\varphi^0_t&= g_k(\varphi_t)\,\dif W^k\\
\dif\varphi^i_t&=\sum_{j=1}^M\partial_u A_{ij}(\varphi^0_t)\,\dif t,\qquad i=1,\dots,N,
\end{split}
\end{equation*}
hence the first coordinate of the characteristic curve is governed by an SDE and further difficulties arise due to randomness (see \cite{bgk}).
If $A$ is also $x$-dependent we observe an additional term in the first equation of the characteristic system \eqref{eq:charr}, however, let us point out that there is a major difference between this and the term coming from the forcing. In particular, if $g=0$ then the flow of diffeomorphisms generated by \eqref{eq:charr} is volume preserving as can be seen easily by calculating divergence of the corresponding vector field. This does not hold true anymore if forcing in nonconservative form is present in the equation, unless $g$ is independent of the solution, i.e. we consider additive noise in \eqref{eq}.

The method of BGK approximation has another advantage especially when dealing with rough path driven conservation laws. As already mentioned above, the problem boils down to solving the characteristic system \eqref{eq:charr}, i.e. an ordinary (stochastic, rough) differential equation and the theory for those problems is well-established, unlike the theory for rough path driven equations in infinite dimension, i.e. rough PDEs. Furthermore, the BGK model also provides an explicit formula for the approximate solutions and therefore the necessary estimates independent of the microscopic scale $\varepsilon$ come rather naturally.

The exposition is organized as follows. In Section \ref{sec:hypotheses}, we introduce the basic setting, define the notion of kinetic solution and state our main result, Theorem \ref{thm:main}. In order to make the paper more self-contained, Section \ref{sec:rough} provides a brief overview of the relevant concepts from rough path theory. Section \ref{sec:uniqueness} is devoted to the proof of uniqueness, reduction of a generalized kinetic solution to a kinetic one and the $L^1$-contraction property, Theorem \ref{thm:reduction} and Corollary \ref{cor:contraction}. The remainder of the paper deals with the existence part of Theorem \ref{thm:main} which is divided into several parts and finally established through Theorem \ref{thm:bgkconvergence}. Subsections \ref{sec:roughtr} and \ref{sec:bgksol} give a rough-pathwise existence of unique solutions to the BGK approximation. In these two sections we work with a fixed realization of the driving signals or, to be more precise, a fixed realization of a joint lift of $(t,z,W)$. In Subsection \ref{sec:conv}, the stochastic approach is resumed and we pass to the limit and obtain a kinetic solution to \eqref{eq}.

\section{Definitions and the main result}
\label{sec:hypotheses}

Let us now introduce the precise setting of \eqref{eq}. We work on a finite time interval $[0,T],\,T>0$ and on the whole space concerning the space variable $x\in\mr^N$.
Throughout the paper, we assume that $z$ can be lifted to a geometric H\"older $p$-rough path for some $p\in (2,3)$ and denote its lift by $\mathbf{z}=(\mathbf{z}^1,\mathbf{z}^2)$. Next we consider the following joint lift of $(z,W)$: we define $\mathbf{\Lambda}=(\mathbf{\Lambda}^1,\mathbf{\Lambda}^2)$ by
\begin{align}\label{jointlift}
\begin{aligned}
\mathbf{\Lambda}_t^{1,i}&=\begin{cases}
							z^i_t, & \text{ if }\,i\in\{1,\dots,M\},\\
							W_t^{i-M}, &\text{ if }\,i\in\{M+1,\dots,M+K\},
							\end{cases}\\
\mathbf{\Lambda}_t^{2,i,j}&=\begin{cases}
							\mathbf{z}_t^{2,i,j}, & \text{ if }\,i,j\in\{1,\dots,M\},\\
							\int_0^t W^{i-M}_r\circ\dif W^{j-M}_r &\text{ if }\,i,j\in\{M+1,\dots,M+K\},\\
							\int_0^t z^i_r\dif W^{j-M}_r & \text{ if }\,i\in\{1,\dots,M\},\,j\in\{M+1,\dots,M+K\},\\
							z^j_tW^{i-M}_t-\int_0^tz^j_r\dif W^{i-M}_r& \text{ if }\,i\in\{M+1,\dots,M+K\},\,j\in\{1,\dots,M\}.
					\end{cases}
\end{aligned}
\end{align}
It was shown in \cite{DOR15} that such a stochastic process exists and could be considered as the canonical joint lift of $z$ and $W$. Note that although in the original equation \eqref{eq} we consider It\^o stochastic integral, the lift of $W$ used in the construction of $\mathbf{\Lambda}$ above corresponds to the Stratonovich version.

Regarding the coefficients in \eqref{eq}, let us fix the following notation. Let
\begin{align*}
a&=(a_{ij})=(\partial_u A_{ij}):\mr^N\times\mr\longrightarrow \mr^{N \times M},\\
b&=(b_{j})=(\diver_x A_{\cdot j}):\mr^N\times\mr\longrightarrow \mr^M,
\end{align*}
and assume that $a,\,b\in \mathrm{Lip}^{\gamma+2}$ and $g\in \mathrm{Lip}^{\gamma+3}$ for some $\gamma>p.$ Here we adopt the notation of \cite[Definition 10.2]{friz}, namely, a mapping $V:\mr^e\rightarrow\mr^d$ belongs to $\mathrm{Lip}^\beta$ provided it is bounded, $\lfloor \beta \rfloor$-times continuously differentiable with bounded derivatives of all orders and its $\lfloor \beta\rfloor^{\text{th}}$ derivative is $\{\beta\}$-H\"older continuous.

Furthermore,we suppose that
\begin{equation}\label{eq:null}
b(x,0)=0\quad g(x,0)=0\;\qquad\forall x\in\mr^N,
\end{equation}
and denote
$$G^2(x,\xi)=\sum_{k=1}^K|g_k(x,\xi)|^2,\qquad \forall x\in\mr^N,\,\xi\in\mr.$$

\subsection{Notations}

We adopt the following notations. The brackets $\langle\cdot,\cdot \rangle$ are used to denote the duality between the space of distributions over $\mr^N\times\mr$ and $C_c^1(\mr^N\times\mr)$. We denote similarly the integral
$$\langle f,h\rangle=\int_{\mr^N}\int_\mr f(x,\xi) h(x,\xi)\,\dif x\,\dif \xi,\qquad f\in L^p(\mr^N\times\mr),\;h\in L^q(\mr^N\times\mr),$$
where $p,q\in[1,\infty]$ are conjugate exponents. By $\mathcal{M}_b([0,T)\times\R^N\times\R)$ we denote the space of Borel measures over $[0,T)\times\R^N\times\R$ and $\mathcal{M}^+_b([0,T)\times\R^N\times\R)$ are then nonnegative Borel measures. We also use the shorthand
$$n(\phi)=\int_{[0,T)\times\mr^N\times\mr}\phi(t,x,\xi)\,\dd n(t,x,\xi),\qquad n\in\mathcal{M}_b([0,T)\times\R^N\times\R),\,\phi \in C_c([0,T)\times\R^N\times\R).$$
and $C_0([0,T]\times\mr^N\times\mr)$ denotes the space of continuous functions $[0,T]\times\mr^N\times\mr$ that vanish at infinity, i.e. for large $(x,\xi)$.
The differential operators gradient $\nabla$ and divergence $\diver$ are (unless otherwise stated) understood with respect to the space variable $x$.


\subsection{Definitions}
\label{subsec:def}

As the next step, let us introduce the kinetic formulation of \eqref{eq} as well as the basic definitions concerning the notion of kinetic solution. The motivation behind this approach is given by the nonexistence of a strong solution and, on the other hand, the nonuniqueness of weak solutions, even in simple cases. The idea is to establish an additional criterion -- the kinetic formulation --
which is automatically satisfied by any strong solution to \eqref{eq} (in case it exists) and which permits to ensure the well-posedness.

We start with the definition of kinetic measure.

\begin{defin}[Kinetic measure]\label{def:kinmeasure}
A mapping $m$ from $\Omega$ to $\mathcal{M}_b^+([0,T]\times\mr^N\times\mr)$, the set of nonnegative bounded measures over $[0,T]\times\mr^N\times\mr$, is said to be a kinetic measure provided
\begin{enumerate}
 \item $m$ is measurable in the following sense: for each $\phi\in C_0([0,T]\times\mr^N\times\mr)$, the mapping $m(\phi):\Omega\rightarrow\mr$ is measurable,
 \item the mapping $$\int_{[0,T)\times\R^N\times\R}\dd m(t,x,\xi):\Omega\to\R$$ is measurable and
$$\E\int_{[0,T)\times\mr^{N}\times\R}\dd m(t,x,\xi)<\infty,$$
 \item for any $\phi\in C_0(\mr^N\times\mr)$, $t\mapsto m(\ind_{[0,t]}\phi)$ is progressively measurable.
\end{enumerate}
\end{defin}

Formally speaking, the kinetic formulation corresponding to the conservation law at hand is given as follows
\begin{equation}\label{eq:kinform}
\begin{split}
\dif F+\nabla F\cdot a\,\dif z-\partial_\xi F\, b\,\dif z&=-\partial_\xi F\, g\,\dif W+\frac{1}{2}\partial_\xi(G^2\partial_\xi F)\,\dif t+\partial_\xi m,\\
F(0)&=F_0,
\end{split}
\end{equation}
where $F=\ind_{u>\xi}$ and $m$ is a kinetic measure\footnote{Here $u$ is a function of $(\omega,t,x)$ so $F(\omega,t,x,\xi)=\ind_{u(\omega,t,x)>\xi}$ is well-defined and regarded as a function of four variables $(\omega,t,x,\xi)$.}. However, since the expected regularity of solutions is low and consequently the rough path driven integrals are not well defined, it is necessary to define a suitable notion of weak solution to this problem.
This leads us to the notion of kinetic solution to rough path driven conservation laws that we introduce in this work.
Note that it is a consistent extension of the corresponding notion of kinetic solution for the case of a smooth driving signal $z$, for further discussion on this subject we refer the reader to Subsection \ref{subsec:smoothdrivers}.

\begin{defin}[Kinetic solution]\label{kinsol}
Let $u_0\in L^1\cap L^2(\Omega\times\mr^N).$
Then a progressively measurable
$$u\in  L^2(\Omega;L^2(0,T;L^2(\mr^N)))$$
satisfying
\begin{equation}\label{fd}
\E\esssup_{0\leq t\leq T}\|u(t)\|_{L^1_x}\leq C
\end{equation}
is said to be a kinetic solution to \eqref{eq} with initial datum $u_0$ provided there exists a kinetic measure $m$ such that the pair $(F=\ind_{u>\xi},m)$ satisfies, for all $\phi\in C^1_c(\mr^N\times\mr)$ and $\alpha\in C^1_c([0,T))$, $\p$-a.s.,
\begin{align}\label{eq:weakkinformul}
\begin{split}
\int_0^T&\big\langle F(t),\phi(\theta_t)\big\rangle\partial_t\alpha(t)\,\dif t+\big\langle F_0,\phi\big\rangle\alpha(0)\\
&=\int_0^T\langle\partial_\xi F(t) g,\phi(\theta_t)\rangle\alpha(t)\,\dif W-\frac{1}{2}\int_0^T\langle\partial_\xi(G^2\partial_\xi F(t)),\phi(\theta_t)\rangle\alpha(t)\,\dif t+ m\big(\alpha\partial_\xi\phi(\theta_t)\big),
\end{split} 
\end{align}
where $\theta=(\theta^0,\theta^x)$ is the inverse flow corresponding to
\begin{align}\label{eq:fl1}
\begin{aligned}
\dif\pi^0_t&=-b(\pi_t)\,\dif \mathbf{z}\\
\dif\pi^x_t&=a(\pi_t)\,\dif \mathbf{z}.
\end{aligned}
\end{align}
\end{defin}


To be more precise, it follows from \cite[Proposition 11.11]{friz} that under our assumptions \eqref{eq:fl1} possesses a unique solution $\pi$ that defines a flow of $C^2$-diffeomorphisms. We denote by $\pi_{s,t}(x,\xi)$ the solution of \eqref{eq:fl1} starting from $(x,\xi)$ at time $s$. To simplify the notation, we write $\pi_t$ instead of $\pi_{0,t}$ and we denote the corresponding inverse flow by $\theta$. Then $\theta_{t,s}=\pi_{t,s}^{-1} $ is the unique solution to the time-reversed problem
\begin{equation}\label{eq:fl2}
\begin{split}
\dif\theta^0_{t,s}&=-b(\theta_{t,s})\,\dif \cev{\mathbf{z}}^s,\\
\dif\theta^x_{t,s}&=a(\theta_{t,s})\,\dif \cev{\mathbf{z}}^s,
\end{split}
\end{equation}
where $\cev{\mathbf{z}}^s(\cdot)=\mathbf{z}(s-\cdot)$ is the time-reversed path to $\mathbf{z}$. We point out that the flow $\pi$ as well as $\theta$ is volume preserving as can be seen easily by calculating divergence of the corresponding vector field and recalling the fact that divergence free vector fields generate volume preserving flows. Thus the Jacobian of $\pi_{s,t}$ satisfies: $\mathrm{J}\pi_{s,t}\equiv1$ and similarly for $\theta$.

Note that with a classical argument of separability, the set of full probability where \eqref{eq:weakkinformul} holds true does not depend on the particular choice of test functions $\phi,\,\alpha$.


We proceed with a reminder of Young measures and the related definition of kinetic function that will eventually lead to the notion of generalized kinetic solution, see Definition \ref{genkinsol}. The concept of Young measures was developed in \cite{young} as a
technical tool for describing composite limits of nonlinear functions with weakly convergent sequences, for further reading we refer the reader e.g. to \cite{malek}.

In what follows, we denote by $\mathcal{P}_1(\mr)$ the set of probability measures on $\mr$.

\begin{defin}[Young measure]
Let $(X,\lambda)$ be a $\sigma$-finite measure space. A mapping $\nu:X\rightarrow\mathcal{P}_1(\mr)$ is called a Young measure provided it is weakly measurable, that is, for all $\phi\in C_b(\mr)$ the mapping $z\mapsto \nu_z(\phi)$ from $X$ to $\mr$ is measurable.
%
A Young measure $\nu$ is said to vanish at infinity if
\begin{equation*}
\int_X\int_\mr|\xi|\,\dif\nu_z(\xi)\,\dif\lambda(z)<\infty.
\end{equation*}
\end{defin}

\begin{defin}[Kinetic function]
Let $(X,\lambda)$ be a $\sigma$-finite measure space.
A measurable function $f:X\times\mr\rightarrow[0,1]$ is called a kinetic function on $X$ if there exists a Young measure $\nu$ on $X$ that vanishes at infinity such that for a.e. $z\in X$ and for all $\xi\in\mr$
$$f(z,\xi)=\nu_z(\xi,\infty).$$
\end{defin}

\begin{lemma}[Compactness of Young measures]\label{kinetcomp}
Let $(X,\lambda)$ be a $\sigma$-finite measure space such that $L^1(X)$ is separable. Let $(\nu^n)$ be a sequence of Young measures on $X$ such that for some $p\in[1,\infty)$
\begin{equation}\label{eq:estyoungm}
\sup_{n\in\mn}\int_X\int_\mr|\xi|^p\,\dif\nu^n_z(\xi)\,\dif \lambda(z)<\infty.
\end{equation}
Then there exists a Young measure $\nu$ on $X$ and a subsequence still denoted by $(\nu^n)$ such that for all $h\in L^1(X)$ and all $\phi\in C_b(\mr)$
$$\lim_{n\rightarrow \infty}\int_X h(z)\int_\mr \phi(\xi)\,\dif\nu^n_z(\xi)\,\dif\lambda(z)=\int_X h(z)\int_\mr \phi(\xi)\,\dif\nu_z(\xi)\,\dif\lambda(z)$$
Moreover, if $f_n,\,n\in\mn,$ are the kinetic functions corresponding to $\nu^n,\,n\in\mn,$ such that \eqref{eq:estyoungm} holds true, then there exists a kinetic function $f$ (which correponds to the Young measure $\nu$ whose existence was ensured by the first part of the statement) and a subsequence still denoted by $(f^n)$ such that
$$f_n\overset{w^*}{\longrightarrow} f\quad \text{ in }\quad L^\infty(X\times\mr).$$

\begin{proof}
Various results of this form are somewhat classical in the literature, a proof for the case of $(X,\lambda)$ being a finite measure space can be found in \cite[Theorem 5, Corollary 6]{debus}, however, one can actually observe that this additional assumption is not used in the proof and therefore the same proof applies to our setting of $(X,\lambda)$ being a $\sigma$-finite measure space.
\end{proof}

\end{lemma}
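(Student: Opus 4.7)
The plan is to follow the standard Young-measure compactness argument (as in \cite{debus}) and verify that finiteness of $\lambda$ is never genuinely invoked, only $\sigma$-finiteness plus separability of $L^1(X)$. First I would identify a Young measure $\nu$ with the positive linear functional $(h,\phi)\mapsto \int_X h(z)\,\nu_z(\phi)\,\dif\lambda(z)$ on $L^1(X;C_0(\mr))$. Since $L^1(X)$ is separable by assumption and $C_0(\mr)$ is separable, so is $L^1(X;C_0(\mr))$, and the functionals induced by $\nu^n$ have norm at most $1$ because each $\nu^n_z$ is a probability measure. Banach--Alaoglu then produces a subsequence and a weak-$*$ limit represented by a weak-$*$ measurable map $z\mapsto\nu_z\in\mathcal{M}_b^+(\mr)$ with $\nu_z(\mr)\leq 1$.

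Next I would promote $\nu_z$ to a probability measure and upgrade the convergence from $\phi\in C_0(\mr)$ to $\phi\in C_b(\mr)$. Fatou applied to \eqref{eq:estyoungm} after the weak-$*$ passage gives $\int_X\int_\mr|\xi|^p\,\dif\nu_z\,\dif\lambda<\infty$, so in particular no mass escapes to $\pm\infty$ in the limit. Decomposing $X=\bigcup_k X_k$ with $\lambda(X_k)<\infty$ and testing against $h=\ind_{X_k}$ and a sequence of cutoffs $\chi_R\in C_0(\mr)$ with $\chi_R\uparrow 1$ yields $\nu_z(\mr)=1$ for $\lambda$-a.e.\ $z$ by a standard diagonal/monotone argument. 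To extend the convergence to $\phi\in C_b(\mr)$, I split $\phi=\phi\chi_R+\phi(1-\chi_R)$ and estimate the tail uniformly in $n$ via Chebyshev, using
\[
\Bigl|\int_X h(z)\int_\mr \phi(\xi)(1-\chi_R(\xi))\,\dif\nu^n_z(\xi)\,\dif\lambda(z)\Bigr|\leq \|\phi\|_\infty\,R^{-p}\int_X |h(z)|\int_\mr|\xi|^p\,\dif\nu^n_z(\xi)\,\dif\lambda(z),
\]
which is uniformly small by \eqref{eq:estyoungm}; the remaining piece converges by the first step.

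For the kinetic-function statement, I would note $0\leq f_n\leq 1$ and extract, again by Banach--Alaoglu, a weak-$*$ limit $f\in L^\infty(X\times\mr)$. To identify $f(z,\xi)=\nu_z((\xi,\infty))$ it suffices to test against products $h(z)\ind_{(a,b)}(\xi)$ with $h\in L^1(X)$ and $a<b$; by Fubini and the identity
\[
\int_a^b f_n(z,\xi)\,\dif\xi=\int_\mr \psi_{a,b}(\xi)\,\dif\nu^n_z(\xi),\qquad \psi_{a,b}(\xi)=(\xi-a)^+\wedge(b-a),
\]
with $\psi_{a,b}\in C_b(\mr)$, the identification reduces to the convergence already established in the first part of the lemma. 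The main obstacle is pinpointing each place where the finite-measure hypothesis might have been used in \cite{debus} (separability of the predual, the choice $h\equiv 1$, any Egorov/Vitali step) and replacing it by a harmless localisation to $X_k$ together with the tightness on $\mr$ provided by the moment bound; once this bookkeeping is done, no new idea beyond the finite-measure proof is required.
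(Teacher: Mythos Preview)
Your outline is correct and follows exactly the route the paper indicates: the paper's own proof merely refers to \cite[Theorem~5, Corollary~6]{debus} and observes that the finiteness hypothesis on $\lambda$ is never used, which is precisely what you verify step by step. One small point to tighten: in your tail estimate the bound $\|\phi\|_\infty R^{-p}\int_X|h(z)|\int_\mr|\xi|^p\,\dif\nu^n_z\,\dif\lambda$ need not be finite for general $h\in L^1(X)$, since $z\mapsto\int_\mr|\xi|^p\,\dif\nu^n_z$ is only in $L^1(X)$, not $L^\infty(X)$; approximate $h$ first by a bounded $L^1$ function (or restrict to $h=\ind_{X_k}$ and then pass to general $h$ by density), and the argument goes through unchanged.
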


\begin{rem}\label{chi}
If $f : X \times \R\to [0, 1]$ is a kinetic function corresponding to the Young measure $\nu$ satisfying, for some $p\in(1,\infty)$,
$$\int_X\int_\mr|\xi|^p\,\dif\nu_z(\xi)\,\dif \lambda(z)<\infty,$$
then we denote by $\chi_f$ the function defined by $\chi_f(z,\xi) = f (z,\xi)- \ind_{0>\xi}$.
Contrary to $f$, this modification is integrable on $E\times\R$ whenever $E\subset X$ with $\lambda(E)<\infty$. Indeed,
$$\chi_f(z,\xi)=\begin{cases}
				-\int_{(-\infty,\xi]}\dd\nu_z,&\xi<0,\\
				\int_{(\xi,\infty)}\dd\nu_z,&\xi\geq0,
				\end{cases}
				$$
hence
\begin{align*}
|\xi|^p\int_X|\chi_f(z,\xi)|\dd\lambda(z)\leq \int_X\int_\R|\zeta|^p\dd\nu_z(\zeta)\dd\lambda(z)<\infty
\end{align*}
which implies
\begin{align*}
\int_E\int_\R|\chi_f(z,\xi)|\dd\xi\dd\lambda(z)\leq \int_\R\frac{1}{1+|\xi|^p}\dd\xi\bigg(\lambda(E)+\int_X\int_\R|\zeta|^p\dd\nu_z(\zeta)\dd\lambda(z)\bigg)<\infty.
\end{align*}
Besides, if $f$ is at equilibrium, that is, there exists $u\in L^1(X)$ such that $f=\ind_{u>\xi}$ and $\nu=\delta_{u=\xi}$ then we will rather write $\chi_u$ instead of $\chi_f$ and it holds true that
$$\int_\R|\chi_{u(z)}(\xi)|\dd\xi= |u(z)|.$$
\end{rem}

\begin{defin}[Generalized kinetic solution]\label{genkinsol}
Let $F_0:\Omega\times\mr^N\times\mr\rightarrow[0,1]$ be a kinetic function. A progressively measurable function $F:\Omega\times[0,T]\times\mr^N\times\mr\rightarrow[0,1]$ is called a generalized solution to \eqref{eq} with initial datum $F_0$ if $F(t)$ is a kinetic function for a.e. $t\in[0,T]$, there exists $C>0$ such that
\begin{equation}\label{integrov}
\E\esssup_{0\leq t\leq T}\int_{\mr^N}\int_{\mr}|\xi|\,\dif\nu_{t,x}(\xi)\,\dif x+\E\int_0^T\int_{\mr^N}\int_{\mr}|\xi|^2\,\dif\nu_{t,x}(\xi)\,\dif x\leq C, 
\end{equation}
where $\nu=-\partial_\xi F$, and if there exists a kinetic measure $m$ such that \eqref{eq:weakkinformul} holds true.
\end{defin}

\subsection{Kinetic solutions in the case of smooth driver $z$}
\label{subsec:smoothdrivers}

For the sake of completeness, let us now present a formal derivation of \eqref{eq:kinform} in case of sufficiently smooth solution to the conservation law \eqref{eq} driven by a smooth path $z$ and a Wiener process $W$. We denote such a solution by $u$ and note that it satisfies \eqref{eq} pointwise in $x\in\R^N$. Let us fix $x\in\mr^N$ and derive the equation satisfied by $\ind_{u(\cdot,x)>\xi}$ which is understood as a distribution in $\xi$. Towards this end, we denote by $\langle \cdot,\cdot\rangle_\xi$ the duality between the space of distributions over $\R$ and $C^\infty_c(\R)$ and observe that if $\phi(\xi)=\int_{-\infty}^\xi\phi_1(\zeta)\,\dif\zeta$ for some $\phi_1\in C_c^\infty(\mr)$, then
$$\langle \ind_{u>\xi},\phi_1\rangle_\xi=\phi(u)-\phi(-\infty)=\phi(u).$$
Therefore using the It\^o formula, we obtain
\begin{align*}
\dif&\,\langle \ind_{u>\xi},\phi_1\rangle_\xi=-\phi_1(u)\diver\big(A(x,u)\big)\dif z+\phi_1(u) g(x,u)\dif W+\frac{1}{2}\phi'_1(u)G^2(x,u)\dif t\\
&=-\phi_1(u)a(x,u)\cdot\nabla u\,\dif z-\phi_1(u)b(x,u)\dif z+\phi_1(u) g(x,u)\dif W+\frac{1}{2}\phi'_1(u)G^2(x,u)\dif t
\end{align*}
and since
$$\nabla\ind_{u>\xi}=\delta_{u=\xi}\nabla u\qquad\text{in}\qquad \mathcal{D}'(\R^N\times\R)\quad\text{a.s.}$$
we test the above against $\phi_2\in C_c^\infty(\mr^N)$ and deduce
\begin{align*}
\dif\langle \ind_{u>\xi},\phi_1\phi_2\rangle&=-\langle a(x,\xi)\nabla\ind_{u>\xi},\phi_1\phi_2\rangle\dif z-\langle b(x,\xi)\delta_{u=\xi},\phi_1\phi_2\rangle\dif z+\langle g(x,\xi)\delta_{u=\xi},\phi_1\phi_2\rangle\dif W\\
&\quad+\frac{1}{2}\langle G^2(x,\xi)\delta_{u=\xi},\phi'_1\phi_2\rangle\dif t\\
&=-\langle a(x,\xi)\nabla\ind_{u>\xi},\phi_1\phi_2\rangle\dif z+\langle b(x,\xi)\partial_\xi \ind_{u>\xi},\phi_1\phi_2\rangle\dif z-\langle g(x,\xi)\partial_\xi \ind_{u>\xi},\phi_1\phi_2\rangle\dif W\\
&\quad+\frac{1}{2}\langle \partial_\xi(G^2(x,\xi)\partial_\xi\ind_{u>\xi}),\phi_1\phi_2\rangle\dif t
\end{align*}
and conclude that the kinetic formulation \eqref{eq:kinform} with the kinetic measure $m=0$ is valid in the sense of $\mathcal{D}'(\mr^N\times\mr)$ a.s. In general, the kinetic measure is not known in advance and becomes part of the solution that takes account of possible singularities of the solution $u$. In particular, it vanishes in the above computation because we assumed certain level of regularity of $u$.

In order to justify the formulation \eqref{eq:weakkinformul}, note that, due to \cite[Theorem 4]{caruana} (similarly to Proposition \ref{prop:aux}), if $\phi\in C^1_c(\R^N\times\R)$ then the composition $\phi(\theta_t)$ is the unique strong solution to
\begin{align*}
\dd \phi(\theta_t)+\nabla \phi(\theta_t)\cdot a\,\dd z-\partial_\xi\phi(\theta_t)\,b\,\dd z&=0,\\
\phi(\theta_0)&=\phi.
\end{align*}
As a consequence, if $(F,m)$ solves \eqref{eq:kinform} in the sense of $\mathcal{D}'(\R^N\times\R)$ a.s.\footnote{For instance $(F,m)=(\ind_{u>\xi},0)$ from the discussion above.} then applying formally the It\^o formula to the product $\langle F(t),\phi(\theta_t)\rangle$ we deduce
\begin{align}\label{hu}
\begin{aligned}
 \langle F(t),\phi(\theta_t)\rangle&=\langle F_0,\phi\rangle -\int_0^t\langle\nabla F\cdot a,\phi(\theta_t)\rangle\dd z+\int_0^t\langle\partial_\xi F\, b,\phi(\theta_t)\rangle\dd z\\
&\quad-\int_0^t\langle\partial_\xi F\, g,\phi(\theta_t)\rangle\dd W+\frac{1}{2}\int_0^t\langle \partial_\xi(G^2\partial_\xi F),\phi(\theta_t)\rangle\dd t- m\big(\ind_{[0,t)}\partial_\xi \phi(\theta_t)\big)\\
&\quad-\int_0^t\langle F,\nabla\phi(\theta_t)\cdot a\rangle\dd z+\int_0^t\langle F,\partial_\xi\phi(\theta_t)\, b\rangle\dd z,
\end{aligned}
\end{align}
where
\begin{align*}
&-\int_0^t\langle\nabla F\cdot a,\phi(\theta_t)\rangle\dd z+\int_0^t\langle\partial_\xi F\, b,\phi(\theta_t)\rangle\dd z-\int_0^t\langle F,\nabla\phi(\theta_t)\cdot a\rangle\dd z+\int_0^t\langle F,\partial_\xi\phi(\theta_t)\, b\rangle\dd z\\
&\qquad\qquad=-\int_0^t\big\langle\nabla \big[F\phi(\theta_t)\big],a\big\rangle\dd z+\int_0^t\big\langle\partial_\xi\big[F\phi(\theta_t)\big], b\big\rangle\dd z\\
&\qquad\qquad=\int_0^t\big\langle \big[F\phi(\theta_t)\big],\diver a-\partial_\xi b\big\rangle\dd z=0
\end{align*}
due to the fact that $\diver a-\partial_\xi b=0$. Thus, \eqref{hu} is a stronger version of \eqref{eq:weakkinformul} that does not require time dependent test functions.

To conclude, let us also shortly discuss the connection with the notion of kinetic solution from \cite{debus} in the case of $z_t=t$. As no rough integrals then appear on the left hand side of \eqref{eq:kinform}, the flow transformation method used in Definition \ref{kinsol} was not needed in \cite[Definition 2]{debus}. Consequently, their notion of kinetic solution relied on solving the corresponding version of \eqref{eq:kinform} directly in the sense of distributions. According to the above reasoning, this is possible in the case of sufficiently smooth solution to \eqref{eq}. Since the setting of \cite{debus} differs from ours in several ways: spatial domain, $x$-dependence of the coefficient $A$, integrability assumptions on a kinetic solution as well as on a kinetic measure, we will not compare the two notions of kinetic solution further.

\subsection{The main result}
\label{subsec:main}

Our main result reads as follows.

\begin{thm}\label{thm:main}
Let $u_0\in L^2(\Omega;L^1(\mr^N))\cap L^4(\Omega;L^2(\mr^N)) $. Under the above assumptions, the following statements hold true:
\begin{enumerate}
\item[\emph{(i)}] There exists a unique kinetic solution to \eqref{eq}. In addition, it satisfies
\begin{equation}\label{eqqq}
\E\sup_{0\leq t\leq T}\|u(t)\|_{L^1_x}^2+\E\bigg|\int_0^T\|u(t)\|_{L^2_{x}}^2\,\dif t\bigg|^2\leq C\Big(\E\|u_0\|_{L^1_x}^2+\E\|u_0\|_{L^2_x}^4\Big).
\end{equation}
\item[\emph{(ii)}] Any generalized kinetic solution is actually a kinetic solution, that is, if $F$ is a generalized kinetic solution to \eqref{eq} with initial datum $\ind_{u_0>\xi}$ then there exists a kinetic solution $u$ to \eqref{eq} with initial datum $u_0$ such that $F=\ind_{u>\xi}$ for a.e. $(t,x,\xi)$.
\item[\emph{(iii)}] If $u_1,\,u_2$ are kinetic solutions to \eqref{eq} with initial data $u_{1,0}$ and $u_{2,0}$, respectively, then for a.e. $t\in[0,T]$
\begin{equation*}
\E\|(u_1(t)-u_2(t))^+\|_{L^1_x}\leq \E\|(u_{1,0}-u_{2,0})^+\|_{L^1_x}.
\end{equation*}
\end{enumerate}
\end{thm}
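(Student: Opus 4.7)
The three assertions are interconnected: uniqueness in (i) is a direct consequence of (iii), while the existence half of (i) naturally produces only a generalized kinetic solution and therefore relies on (ii) for the identification of its limit. I would therefore first establish (ii) and (iii) jointly via a doubling of variables argument applied to the flow-transformed weak formulation \eqref{eq:weakkinformul}, and only then construct a solution by a BGK relaxation procedure built on the characteristic system \eqref{eq:charr}. The key conceptual benefit of the transformed formulation \eqref{eq:weakkinformul} is that no rough integrals appear in it, so that doubling can be carried out by essentially classical means once the composition with $\theta_t$ is handled properly.

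\textbf{Doubling of variables for (ii) and (iii).} Given two (generalized) kinetic solutions $F_1, F_2$ with kinetic measures $m_1, m_2$, I would test the weak formulation \eqref{eq:weakkinformul} for $F_1$ against $\phi_2(\theta_t(y,\zeta))$ and symmetrically for $\bar F_2 = 1 - F_2$ against $\phi_1(\theta_t(x,\xi))$, after tensorizing both equations with a standard mollifier $\rho_\delta$ in the doubled variables. Since the flow $\theta_t$ is volume preserving and acts only through the test functions, the nonlinear rough terms do not reappear under differentiation. The It\^o stochastic integrals cancel in expectation, and the standard computation for the quadratic variation produces the usual $\tfrac{1}{2}\partial_\xi(G^2\partial_\xi F)$ terms whose combination is controlled by the identity $\delta_{u_1=\xi}\delta_{u_2=\xi}(g(x,u_1)-g(x,u_2))^2 \geq 0$. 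Using positivity of $m_1, m_2$ and passing $\delta \to 0$, I would deduce
\begin{align*}
\E\int_{\mr^N}\!\!\int_\R F_1(t)\bar F_2(t)\,\dd x\,\dd\xi \;\leq\; \E\int_{\mr^N}\!\!\int_\R F_{1,0}\bar F_{2,0}\,\dd x\,\dd\xi.
\end{align*}
Specializing to $F_i = \ind_{u_i>\xi}$ and using the elementary identity $\int_\R \ind_{u_1>\xi}(1-\ind_{u_2>\xi})\dd\xi = (u_1-u_2)^+$ gives (iii), and hence uniqueness in (i). Taking instead $F_1 = F_2 = F$ a generalized kinetic solution with equilibrium initial datum $\ind_{u_0>\xi}$ yields $\E\int F(1-F)\dd x\dd\xi \leq 0$, so that $F \in \{0,1\}$ a.e., which forces $F = \ind_{u>\xi}$ for some progressively measurable $u$, establishing (ii).

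\textbf{Existence via BGK approximation.} For $\varepsilon > 0$ introduce the BGK relaxation
\begin{align*}
\dif F^\varepsilon + \nabla F^\varepsilon\cdot a\,\dd\mathbf{z} - \partial_\xi F^\varepsilon\, b\,\dd\mathbf{z} + \partial_\xi F^\varepsilon\, g\,\dd W - \tfrac{1}{2}\partial_\xi(G^2\partial_\xi F^\varepsilon)\dd t = \tfrac{1}{\varepsilon}\bigl(\ind_{u^\varepsilon>\xi} - F^\varepsilon\bigr)\dd t,
\end{align*}
with $u^\varepsilon(t,x) = \int_\R(F^\varepsilon - \ind_{0>\xi})\dd\xi$. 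Subsections \ref{sec:roughtr} and \ref{sec:bgksol} are designed to solve this pathwise along the characteristics \eqref{eq:charr}, producing an explicit Duhamel/Feynman--Kac-type formula for $F^\varepsilon$ which immediately yields the $L^1\cap L^2$ bounds \eqref{eqqq} uniformly in $\varepsilon$ together with an $\varepsilon$-uniform bound on the defect measure $\tfrac{1}{\varepsilon}(F^\varepsilon - \ind_{u^\varepsilon>\xi})$. Lemma \ref{kinetcomp} then extracts a (weak-$*$) limit $F$ which is a generalized kinetic solution with a nonnegative kinetic measure $m$ arising as the limit of the defect measure; part (ii) converts $F$ into a genuine kinetic solution $u$, and \eqref{eqqq} persists by lower semicontinuity.

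\textbf{Main obstacle.} The hardest step is the joint handling of the rough characteristic flow and the It\^o forcing in the BGK analysis and in its limit. Since the flow $\theta$ is built from the deterministic lift $\mathbf{z}$ while the noise $W$ lives on $\Omega$, one must ensure that the Stratonovich-based joint lift $\mathbf{\Lambda}$ from \eqref{jointlift} is compatible with It\^o stochastic integration throughout, in particular when passing to the limit $\varepsilon\to 0$: continuity of the rough flow in the rough-path metric has to be combined with the probabilistic tightness arguments on the stochastic part. All other steps (doubling, reduction, Young measure compactness) are essentially structural once the weak formulation \eqref{eq:weakkinformul} is seen to contain no rough integrals.
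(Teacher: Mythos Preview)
Your overall architecture matches the paper exactly: (ii) and (iii) via doubling applied to the flow-transformed formulation \eqref{eq:weakkinformul}, existence via BGK along the characteristic system, and reduction of the BGK limit via (ii). The BGK sketch and the role of Lemma~\ref{kinetcomp} are accurate.

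There is, however, a genuine gap in your doubling argument. When you mollify in the transformed variables, the test function is $\varrho_\delta\big((x,\xi)-\theta_{s,t}(\cdot,\cdot)\big)$, and its $\sigma$-derivative equals $-\nabla_{x,\xi}\varrho_\delta\cdot\partial_\sigma\theta_{s,t}(z,\sigma)$, \emph{not} $-\partial_\xi\varrho_\delta$. Hence the kinetic-measure terms and the It\^o correction terms do not recombine as in the classical case: after symmetrizing between the two copies one is left with remainders carrying the factor $\partial_\sigma\theta_{s,t}(z,\sigma)-\partial_\zeta\theta_{s,t}(y,\zeta)$. By the second-derivative bound of Theorem~\ref{thm:existence} this difference is only of order $C(t-s)^{1/p}\delta$, so it does \emph{not} vanish as $\delta\to 0$ on a fixed interval $[0,t]$. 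The paper deals with this by proving the doubling estimate only on short subintervals $[t_{i-1},t_i]$ of size $h$ (Proposition~\ref{prop:doubling}), restarting the evolution at each $t_i$ via Lemma~\ref{lemma:neworigin}, summing telescopically in Theorem~\ref{thm:reduction}, and then letting $h\to 0$; the remainders, being $h^{1/p}$ times uniformly bounded quantities, disappear in this limit. Without this time-partition step your inequality $\E\int F_1\bar F_2\le\E\int F_{1,0}\bar F_{2,0}$ does not follow.

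A smaller but related omission: ``the It\^o integrals cancel in expectation'' hides real work. Multiplying the two formulations produces cross-terms between a continuous martingale and a c\`adl\`ag BV function (coming from $m_i$), which requires an integration-by-parts formula of the type in \cite{protter,revuz}; the surviving pieces are then reabsorbed using the mild representation of Corollary~\ref{cor:strongerversion} before the expected sign conditions on $m_1,m_2$ can be used.
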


\subsection{Application to conservation laws with stochastic flux and forcing}

It is immediately possible to apply our theory to conservation laws of the form \eqref{eq:stoch1}, where $B$ is a stochastic process that is independent on $W$ and that can be enhanced to a stochastic process $\mathbf B$ for which almost every realization is a geometric $p$-H\"older rough path. For instance, one may consider another Brownian motion but also more general Gaussian or Markov processes (the reader is referred to \cite[Part 3]{friz} for further examples).

Let $B$ be a Brownian motion defined on a stochastic basis $(\bar\Omega,\bar{\mf},(\bar{\mf}_t),\bar\prst)$ and let $\mathbf B(\bar\omega)$ be a realization of its (Stratonovich) lift. For $\bar\omega$ fixed, we set
$$\mathbf z=(\mathbf z^1,\mathbf z^2):=\big(\mathbf B^1(\bar\omega),\mathbf B^2(\bar\omega)\big)$$
and Theorem \ref{thm:main} yields the existence of a unique kinetic solution
$$u(\bar\omega)\in L^1(\Omega;L^\infty(0,T;L^1(\mr^N)))\cap L^2(\Omega;L^2(0,T;L^2(\mr^N)))$$
together with the corresponding $L^1$-contraction property.
Let us now set
$$\tilde\Omega=\bar\Omega\times\Omega,\qquad \tilde\mf=\bar\mf\otimes\mf,\qquad \tilde\mf_t=\bar\mf_t\otimes\mf_t,\qquad \tilde\prst=\bar\prst\otimes\prst$$
and
$$\tilde B(\bar\omega,\omega):=B(\bar\omega),\qquad \tilde W(\bar\omega,\omega):=W(\omega).$$
Both these processes are Brownian motions on $(\tilde \Omega,\tilde\mf,(\tilde\mf_t),\tilde\prst)$ and if $\tilde {\mathbf B}$ denotes the (Stratonovich) lift of $\tilde B$ then
$$\tilde {\mathbf B}(\bar\omega,\omega)=\mathbf B(\bar\omega).$$
Moreover, standard results in rough path theory guarantee that the RDE solutions to \eqref{eq:fl1} and \eqref{eq:fl2} driven by $\mathbf z=\tilde{\mathbf B}(\bar\omega,\omega)$ coincide $\tilde\prst$-a.s. with the corresponding SDE solutions and in particular they are $(\tilde\mf_t)$-adapted. It remains to verify that the It\^o integral in the definition of kinetic solution \eqref{eq:weakkinformul}, which is now constructed for a.e. $\bar\omega $ as an It\^o integral on $ (\Omega,\mf,(\mf_t),\prst)$, can be extended to $(\tilde\Omega,\tilde\mf,(\tilde\mf_t),\tilde\prst)$. But that follows directly from its construction: the corresponding Riemann sums converge in $\prst$ for a.e. $\bar\omega$, therefore by Fubini's theorem they also converge in $\tilde\prst.$

As a consequence, \eqref{eq:stoch1} or more precisely its kinetic formulation \eqref{eq:weakkinformul} is solved in the natural (stochastic) sense and Theorem \ref{thm:main} applies pathwise in $\bar\omega$.

\section{Elements of rough path theory}
\label{sec:rough}

In this section, we recall some basic notions and results from rough path theory which are used throughout the paper. For the general exposition we refer the reader to \cite{friz}, \cite{lyons1}, \cite{lyons2}.

Let $G^{[p]}(\mr^d)$ denote the step-$[p]$ nilpotent free group over $\mr^d$ and let us consider the following rough differential equation (RDE)
\begin{equation}\label{eq:rde}
\begin{split}
\dif y&=V(y)\dif\mathbf{x},\\
y(0)&=y_0,
\end{split}
\end{equation}
where $V=(V_1,\dots,V_d)$ is a family of sufficiently smooth vector fields on $\mr^e$ and $\mathbf{x}:[0,T]\rightarrow G^{[p]}(\mr^d)$ is a geometric H\"older $p$-rough path, namely, it is a path with values in $G^{[p]}(\mr^d)$ satisfying
$$\|\mathbf{x}\|_{\frac{1}{p}\text{-H\"ol};[0,T]}=\sup_{0\leq s< t\leq T}\frac{\|\mathbf{x}_{s,t}\|}{|s-t|^{1/p}}<\infty.$$
We denote by $C^{0,\frac{1}{p}\text{-H\"ol}}([0,T];G^{[p]}(\mr^d))$ the space of all geometric H\"older $p$-rough paths.

Definition of a solution to problems of the form \eqref{eq:rde} is based on Davie's lemma (see \cite[Lemma 10.7]{friz}) that gives uniform estimates for ODE solutions depending only on the rough path regularity (e.g. $p$-variation) of the canonical lift of a regular driving signal $x:[0,T]\rightarrow\mr^d.$ As a consequence, a careful limiting procedure yields a reasonable notion of solution to \eqref{eq:rde}. To be more precise, we define a solution to \eqref{eq:rde} as follows.

\begin{defin}\label{def:solutionrde}
Let $\mathbf{x}\in C^{0,\frac{1}{p}\text{-H\"ol}}([0,T];G^{[p]}(\mr^d))$ be a geometric H\"older $p$-rough path and suppose that $(x^n)$ is a sequence of Lipschitz paths with the corresponding step-$[p]$ lifts denoted by $\mathbf{x}^n$, i.e.
$$\mathbf{x}^n_t:=S_{[p]}(x^n)_t=\bigg(1,\int_{0<v<t}\dif x^n_v,\dots,\int_{0<v_1<\cdots<v_{[p]}<t}\dif x^n_{v_1}\otimes\cdots\otimes \dif x^n_{v_{[p]}}\bigg),$$
such that
$$\mathbf{x}^n\longrightarrow \mathbf{x}$$
uniformly on $[0,T]$ and $\sup_n \|\mathbf{x}^n\|_{\frac{1}{p}\text{-H\"ol};[0,T]}<\infty$. We say that $y\in C([0,T];\mr^e)$, also denoted by $\pi_{(V)}(0,y_0;\mathbf{x})$, is a solution to \eqref{eq:rde} provided it is a limit point (in uniform topology on $[0,T]$) of
$$\big\{\pi_{(V)}(0,y_0;x^n);\,n\geq1\big\}$$
where $\pi_{(V)}(0,y_0;x^n)$ denotes the solution to the ODE
\begin{equation*}
\begin{split}
\dif y^n&=V(y^n)\dif{x}^n,\\
y^n(0)&=y_0.
\end{split}
\end{equation*}
\end{defin}

Under a sufficient regularity assumption upon the collection of vector fields $V$, there exists a unique solution to \eqref{eq:rde} which defines a flow of diffeomorphisms.

\begin{thm}\label{thm:existence}
Let $p\geq 1$. Let $\mathbf{x}$ be a geometric H\"older $p$-rough path and suppose that the vector fields $V=(V_1,\dots,V_d)$ are $\mathrm{Lip}^{\gamma+k}$ for some $\gamma>p$. Then the following holds true.

\begin{enumerate}
 \item[\emph{(i)}] There exists a unique solution to \eqref{eq:rde}, say $\pi_{(V)}(0,y_0;\mathbf{x})\in C([0,T];\mr^e)$, and
the map
$$\phi:(t,y)\in[0,T]\times\mr^e\mapsto\pi_{(V)}(0,y;\mathbf{x})_t\in\mr^e$$
is a flow of $C^k$-diffeomorphisms.
 \item[\emph{(ii)}] There exists $C>0$ that depends on $|V|_{\text{\emph{Lip}}^{\gamma+k}}$ and $\|\mathbf{x}\|_{\frac{1}{p}\text{\emph{-H\"ol}};[0,T]}$ such that for every multiindex $\alpha$ with $1\leq |\alpha|\leq k$ the following estimates hold true\footnote{Here, $|\cdot|_{\frac{1}{p}\text{{-H\"ol}};[0,T]}$ denotes the H\"older seminorm of a mapping taking values in $\mr^e$.}
$$\sup_{y\in\mr^e}|\partial_\alpha\phi(y)|_{\frac{1}{p}\text{\emph{-H\"ol}};[0,T]}\leq C,\qquad \sup_{y\in\mr^e}|\partial_\alpha\phi^{-1}(y)|_{\frac{1}{p}\text{\emph{-H\"ol}};[0,T]}\leq C.$$
%
\end{enumerate}

\begin{proof}
The proof of these results can be found in \cite[Proposition 11.11]{friz} and in \cite[Lemma 13]{crisan}.
\end{proof}
\end{thm}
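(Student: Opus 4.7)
The plan is to combine the Lyons--Davie universal limit theorem (in the form of Friz--Victoir) for the well-posedness part of (i) with an analysis of the augmented RDE satisfied by the spatial derivatives of the flow for the $C^k$-regularity in (i) and the H\"older bounds in (ii).

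First I would establish the Davie-type a priori estimate: for a Lipschitz path $x$ with step-$[p]$ lift $\mathbf{x}$ and vector fields $V\in\mathrm{Lip}^\gamma$ with $\gamma>p$, the ODE solution $y=\pi_{(V)}(0,y_0;x)$ satisfies
\begin{equation*}
\Big|y_t-y_s-\sum_{1\leq |I|\leq [p]} V_I(y_s)\,\mathbf{x}_{s,t}^{I}\Big|\leq C\,\|\mathbf{x}\|_{\frac{1}{p}\text{-H\"ol};[s,t]}^{\gamma}|t-s|^{\gamma/p},
\end{equation*}
with $C$ depending only on $|V|_{\mathrm{Lip}^\gamma}$ and $\|\mathbf{x}\|_{\frac{1}{p}\text{-H\"ol};[0,T]}$. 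This is the standard local increment bound, iterated along a partition of $[0,T]$. Given a sequence $(x^n,\mathbf{x}^n)$ converging uniformly to $\mathbf{x}$ with $\sup_n\|\mathbf{x}^n\|_{\frac{1}{p}\text{-H\"ol}}<\infty$, the estimate yields equicontinuity of $(y^n)$; Arzel\`a--Ascoli then gives a uniformly convergent subsequence whose limit $y$ depends only on $\mathbf{x}$ (not on the approximating sequence), because applying the Davie estimate to the difference of two candidate ODE solutions and invoking a Gronwall-type argument produces uniqueness. This proves existence and uniqueness of $\pi_{(V)}(0,y_0;\mathbf{x})$.

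Second, for the $C^k$-regularity of the flow $\phi(t,y)=\pi_{(V)}(0,y;\mathbf{x})_t$, I would consider the enlarged system on $\mr^e\times(\mr^e)^{\otimes 1}\times\dots\times(\mr^e)^{\otimes k}$ whose smooth-driven ODE flow is $(y,\partial_y y,\dots,\partial^k_y y)$. The augmented vector fields are polynomial in the Jacobian variables and involve $V,\nabla V,\dots,\nabla^k V$; thus the hypothesis $V\in\mathrm{Lip}^{\gamma+k}$ places them in $\mathrm{Lip}^\gamma$ on bounded sets of Jacobians. Because the equations for the derivative variables are \emph{linear} in those variables, with coefficients controlled by the already bounded $\nabla^j V(y_t)$, one obtains global-in-initial-condition existence and uniqueness via the linear RDE theory (Crisan, Friz--Victoir Sec.~10--11) together with uniform bounds. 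Passing to the limit via step one identifies the components of this augmented RDE solution with the spatial derivatives of $\phi$ of order $\leq k$. Applying the same argument to the time-reversed RDE \eqref{eq:fl2} produces the $C^k$-regularity of $\phi^{-1}$, and the flow property $\phi_{s,t}\circ\phi_{r,s}=\phi_{r,t}$ follows by passing to the limit in the corresponding ODE identity.

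Finally, for the H\"older estimates (ii), I would apply the Davie-type bound of step one to the augmented RDE of step two: for each multiindex $|\alpha|\leq k$, $\partial_\alpha\phi(y)$ is a coordinate of the augmented flow, so it inherits $\frac{1}{p}$-H\"older regularity in time with constant depending only on $|V|_{\mathrm{Lip}^{\gamma+k}}$ and $\|\mathbf{x}\|_{\frac{1}{p}\text{-H\"ol};[0,T]}$ and, crucially, \emph{uniformly in} $y\in\mr^e$ thanks to the linear structure of the derivative equations. The identical reasoning applied to the time-reversed RDE yields the analogous estimate for $\partial_\alpha\phi^{-1}$. The main obstacle is step two: justifying that the formal derivative RDE is well-posed globally despite having vector fields only locally Lipschitz in the Jacobian variables; this is precisely where one must exploit the linear dependence on those variables and appeal to linear RDE estimates rather than the bounded-coefficient Davie theory used in step one.
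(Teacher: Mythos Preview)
Your proposal is a reasonable and essentially correct sketch of the standard proof, but you should be aware that the paper does not actually prove this theorem: its entire ``proof'' consists of the single sentence referring the reader to \cite[Proposition 11.11]{friz} and \cite[Lemma 13]{crisan}. What you have written is, in outline, the content of those references---Davie's a~priori increment estimate combined with Arzel\`a--Ascoli for well-posedness, the augmented RDE on $(y,\partial_y y,\dots,\partial_y^k y)$ with its linear structure in the derivative variables for the $C^k$-flow property, and the resulting uniform H\"older bounds---so there is no genuine divergence of approach to compare. If anything, your sketch is more informative than the paper's treatment; the only caveat is the point you already flag in your last paragraph, namely that the augmented vector fields are merely locally $\mathrm{Lip}^\gamma$ (polynomial in the Jacobian variables), and one must invoke the linear RDE theory together with a~priori bounds on the Jacobians to close the argument, which is exactly how \cite{friz} and \cite{crisan} proceed.
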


\section{Contraction property}
\label{sec:uniqueness}

Let us start with the question of uniqueness. Our first result gives the existence of representatives of a generalized kinetic solution that possess certain left- and right- continuity properties. This builds the foundation for the key estimate of this section, Proposition \ref{prop:doubling}, as it allows us to strengthen the sense in which \eqref{eq:weakkinformul} is satisfied, namely, we obtain a weak formulation that is only weak in $x,\,\xi$ (cf. Corollary \ref{cor:strongerversion}) and therefore using time dependent test functions is no longer necessary.


\begin{prop}[Left- and right-continuous representatives]\label{limits}
Let $F$ be a generalized kinetic solution to \eqref{eq}. Then $F$ admits representatives $F^-$ and $F^+$ which are a.s. left- and right-continuous, respectively, in the sense of $\mathcal{D}'(\mr^N\times\mr)$. More precisely, for all $t^*\in[0,T)$ there exist kinetic functions $F^{*,+}$ on $\Omega\times\mr^N$ such that setting $F^+(t^*)=F^{*,+}$ yields $F^+=F$ for a.e. $(\omega,t,x,\xi)$ and 
\begin{equation*}
\big\langle F^+(t^*+\,\varepsilon),\phi(\theta_{t^*+\varepsilon})\big\rangle\longrightarrow\big\langle F^{+}(t^*),\phi(\theta_{t^*})\big\rangle\quad\varepsilon\downarrow 0\quad\forall\phi\in C^1_c(\mr^N\times\mr)\quad \text{a.s}.
\end{equation*}
Similarly, for all $t^*\in(0,T]$ there exist kinetic functions $F^{*,-}$ on $\Omega\times\mr^N$ such that setting $F^-(t^*)=F^{*,-}$ yields $F^-=F$ for a.e. $(\omega,t,x,\xi)$ and 
\begin{equation*}
\big\langle F^-(t^*-\,\varepsilon),\phi(\theta_{t^*-\varepsilon})\big\rangle\longrightarrow\big\langle F^{-}(t^*),\phi(\theta_{t^*})\big\rangle\quad\varepsilon\downarrow 0\quad\forall\phi\in C^1_c(\mr^N\times\mr)\quad \text{a.s}.
\end{equation*}
Furthermore, there exists a set of full measure $\mathcal{A}\subset(0,T)$ such that, for all $t\in\mathcal{A}$,
\begin{equation}\label{1s1}
\langle F^-(t),\phi(\theta_t)\rangle=\langle F^+(t),\phi(\theta_t)\rangle\qquad\forall\phi\in C^1_c(\mr^N\times\R)\quad\text{a.s}.
\end{equation}

\begin{proof}
{\em Step 1:} First, we show that $F$ admits representatives $F^-$ and $F^+$ that are left- and right-continuous, respectively, in the required sense.
It follows the ideas of \cite[Proposition 8]{debus} and \cite[Proposition 3.1]{hof}, nevertheless, as our mixed rough-stochastic setting introduces new difficulties we present the proof in full detail.

As the space $C^1_c(\mr^N\times\mr)$ (endowed with the topology of the uniform convergence on any compact set of functions and their first derivatives) is separable, let us fix a countable dense subset $\mathcal{D}_1$. Let $\phi\in \mathcal{D}_1$ and $\alpha\in C^1_c([0,T))$.
Integration by parts and the stochastic version of Fubini's theorem applied to \eqref{eq:weakkinformul} yield
\begin{equation*}
\int_0^T g_\phi(t)\alpha'(t)\dif t+\langle F_0,\phi\rangle\alpha(0)=\langle m,\partial_\xi\phi(\theta_t)\rangle(\alpha)\qquad\prst\text{-a.s.}
\end{equation*}
where
\begin{equation}\label{fceg}
\begin{split}
g_\phi(t)&=\big\langle F(t),\phi(\theta_t)\big\rangle+\int_0^t\langle\partial_\xi F g,\phi(\theta_s)\rangle\dif W-\frac{1}{2}\int_0^t\langle\partial_\xi(G^2\partial_\xi F,\phi(\theta_s)\rangle\dif s.
\end{split}
\end{equation}
Hence $\partial_tg_\phi$ is a (pathwise) Radon measure on $[0,T]$ and by the Riesz representation theorem $g_\phi\in BV([0,T])$. 
Due to the properties of $BV$-functions, we obtain that 
$g_\phi$  admits left- and right-continuous representatives which coincide except for an at most countable set.
Moreover, apart from the first one all terms in \eqref{fceg} are almost surely continuous in $t$.  Hence, on a set of full measure, denoted by $\Omega_\phi$, $\langle F(t),\phi(\theta_t)\rangle$ also
admits left- and right-continuous representatives which coincide except for an at most countable set.
Let them be denoted by
$\langle F,\phi(\theta_t)\rangle^\pm$ and set
$\Omega_0=\cap_{\phi\in\mathcal{D}_1}\Omega_\phi.$ Since $\mathcal{D}_1$ is countable, $\Omega_0$ is also a set of full measure.
Besides, for $\phi\in \mathcal{D}_1$, $(t,\omega)\mapsto \langle F(t,\omega),\phi(\theta_t)\rangle^\pm$ has left- and right-continuous trajectories 
in time, respectively,
and are thus measurable with respect to $(t,\omega)$. For $\phi\in C^1_c(\mr^N\times\mr)$, we define $\langle F(t,\omega),\phi(\theta_t)\rangle^\pm$ on 
$[0,T]\times \Omega_0$ as the limit 
of $\langle F(t,\omega),\phi_n(\theta_t)\rangle^\pm$ for any sequence $(\phi_n)$ in $\mathcal{D}_1$ converging to $\phi$ in the topology of $C^1_c(\R^N\times\R)$. Then as a pointwise limit of measurable functions $\langle F(\cdot,\cdot),\phi(\theta_t)\rangle^\pm$ is also measurable in $(t,\omega)$. Moreover, due to the uniform convergence $\phi_n$ to $\phi$ and boundedness of $F$, it has left- and right-continuous 
trajectories, respectively.
Consequently, let $\omega\in\Omega_0$ and let $\mathcal{N}_\omega\subset(0,T)$ denote the corresponding countable set of times, where
$$\langle F(t,\omega),\phi(\theta_t)\rangle^-\neq\langle F(t,\omega),\phi(\theta_t)\rangle^+\qquad\text{for some}\quad\phi\in\mathcal{D}_1.$$
It follows from the Fubini theorem that there exists a set of full measure $\mathcal{A}\subset(0,T)$ such that for all $t\in\mathcal{A}$ it holds true that $t\notin\mathcal{N}_\omega$ for a.e. $\omega\in\Omega$, i.e.
$$\langle F(t),\phi(\theta_t)\rangle^-=\langle F(t),\phi(\theta_t)\rangle^+\qquad\forall\phi\in\mathcal{D}_1\quad\text{a.s.}$$
thus
\begin{equation}\label{1s}
\langle F(t),\phi(\theta_t)\rangle^-=\langle F(t),\phi(\theta_t)\rangle^+\qquad\forall\phi\in C^1_c(\mr^N\times\R)\quad\text{a.s}.
\end{equation}

Let us proceed with the construction of $F^+,$ the construction of $F^-$ being analogous. In the sequel, we will frequently write expressions of the type $F(t,\pi_{t})$ as a shorthand for the composition of $F(t,\cdot)$ with $\pi_t$ (similarly for $F^+$ etc.). Therefore, $\langle F(t,\pi_t),\phi\rangle $ is understood as
$$\int_{\mr^{N+1}}F(t,\pi_t(x,\xi))\phi(x,\xi)\,\dd x\,\dd\xi$$
and since the flow $\pi$ is volume preserving
$$\langle F(t,\pi_t),\phi\rangle=\int_{\mr^{N+1}}F(t,x,\xi)\phi(\theta_t(x,\xi))\,\dd x\,\dd\xi=\langle F(t),\phi(\theta_t)\rangle.$$

It is now straightforward to define $F^+(t),\, t\in[0,T),$ by
\begin{equation}\label{eq:ff}
\big\langle F^+(t),\phi(\theta_t)\big\rangle:=\langle F(t),\phi(\theta_t)\rangle^+,\qquad\phi\in C^1_c(\mr^N\times\mr) \qquad \text{a.s.}
\end{equation}
and observe that $F^+(t)$ is right-continuous in the required sense.
Note that $F^+(t)$ is a.s. a well-defined distribution since, due to the flow properties of $\pi$ and $\theta$, the mapping $\phi\mapsto\phi(\theta_t)$ is a bijection on $C^1_c(\mr^N\times\mr)$ with the inverse mapping $\phi\mapsto\phi(\pi_t)$. Moreover, it belongs to $L^\infty(\mr^N\times\R)$ a.s. Indeed, for $\omega\in\Omega_0$ and $\phi\in \mathcal{D}_1$, if $t\in[0,T)$ is such that
$$\langle F(t,\omega),\phi(\theta_t)\rangle^+=\langle F(t,\omega),\phi(\theta_t)\rangle$$
then it follows immediately from the fact that $F(t)\in L^\infty(\mr^N\times\R)$ a.s.
For a general $t\in[0,T)$ we take a sequence $t_n\searrow t$ with the property above and use lower semicontinuity. Moreover, seen as a function $F^+ :\Omega\times [0, T ] \to L^p_{loc}(\R^N\times\R)$, for some $p\in[1,\infty)$, it is weakly measurable and therefore measurable. Hence, according to the Fubini theorem
$F^+$, as a function of four variables $\omega,t,x,\xi$, is measurable.

Next, we show that $F^+$ is a representative (in time) of $F$, i.e. for a.e. $t^*\in[0,T)$ it holds that $F^+(t^*)=F(t^*)$ where the equality is understood a.e. in $(\omega,x,\xi)$. Indeed, due to the Lebesgue differentiation theorem,
\begin{equation*}
\lim_{\varepsilon\rightarrow 0}\frac{1}{\varepsilon}\int_{t^*}^{t^*+\varepsilon}F\big(t,\pi_t(x,\xi)\big)\,\dif t=F\big(t^*,\pi_{t^*}(x,\xi)\big)\qquad\text{a.e. }(\omega,t^*,x,\xi)
\end{equation*}
hence by the dominated convergence theorem
\begin{equation*}
\lim_{\varepsilon\rightarrow 0}\frac{1}{\varepsilon}\int_{t^*}^{t^*+\varepsilon}\big\langle F(t,\pi_t),\phi\big\rangle\,\dif t=\big\langle F(t^*,\pi_{t^*}),\phi\big\rangle\qquad\forall\phi\in C_c^1(\mr^N\times\mr)\quad\text{a.e. } (\omega,t^*).
\end{equation*}
Since the left hand side is equal to $\big\langle F^+(t^*,\pi_{t^*}),\phi\big\rangle$ for all $t^*\in [0,T]$ and $\omega\in\Omega_0$, it follows that
$$\big\langle F^+(t^*),\phi(\theta_{t^*})\big\rangle=\big\langle F(t^*),\phi(\theta_{t^*})\big\rangle\qquad\forall\phi\in C_c^1(\mr^N\times\mr)\quad\text{a.e. } (\omega,t^*)$$
which implies
$$ F^+(t^*)= F(t^*)\qquad\text{in}\quad L^\infty(\R^N\times\R)\quad\text{a.e. } (\omega,t^*)$$
and Fubini theorem together with the measurability of $F^+$ and $F$ regarded as functions of four variables $\omega,t,x,\xi$ yields
$$ F^+(t^*,x,\xi)= F(t^*,x,\xi)\qquad\text{a.e. } (\omega,t^*,x,\xi).$$


{\em Step 2:} Second, we prove that for all $t^*\in[0,T)$
\begin{equation}\label{eq:aaa}
\big\langle F^+(t^*\pm\,\varepsilon),\phi\big\rangle\longrightarrow\big\langle F^{+}(t^*),\phi\big\rangle\quad\quad\forall\phi\in C^1_c(\mr^N\times\mr) \quad\text{a.s}.
\end{equation}
Towards this end, we verify
\begin{equation}\label{eq:aaaa}
\big\langle F^+(t^*+\varepsilon,\pi_{t^*}),\phi\big\rangle\longrightarrow\big\langle F^{+}(t^*,\pi_{t^*}),\phi\big\rangle\quad\quad\forall\phi\in C^1_c(\mr^N\times\mr)\quad\text{a.s}.
\end{equation}
and observe that since the flow $\pi$ is volume preserving, testing in \eqref{eq:aaaa} by $\phi(\pi_{t^*})\in C^1_c(\mr^N\times\mr)$ yields \eqref{eq:aaa}.
In order to prove \eqref{eq:aaaa}, we write
\begin{align*}
\big| &\langle F^+(t^*+\varepsilon,\pi_{t^*}),\phi\rangle- \langle F^+(t^*,\pi_{t^*}),\phi\rangle\big|\\
&\quad\leq\big| \langle F^+(t^*+\varepsilon,\pi_{t^*}),\phi\rangle-\big\langle F^+(t^*+\varepsilon,\pi_{t^*+\varepsilon}),\phi\big\rangle\big|+\big|\big\langle F^+(t^*+\varepsilon,\pi_{t^*+\varepsilon}),\phi\big\rangle-\big\langle F^+(t^*,\pi_{t^*}),\phi\big\rangle\big|.
\end{align*}
The second term on the right hand side vanishes a.s. as $\varepsilon\rightarrow0$ according to {\em Step 1} and regarding the first one, we have
\begin{align*}
\big| &\langle F^+(t^*+\varepsilon,\pi_{t^*}),\phi\rangle-\big\langle F^+(t^*+\varepsilon,\pi_{t^*+\varepsilon}),\phi\big\rangle\big|=\big|\big\langle F^+(t^*+\varepsilon),\phi(\theta_{t^*})-\phi(\theta_{t^*+\varepsilon})\big\rangle\big|
\end{align*}
and we argue by the dominated convergence theorem:
according to Theorem \ref{thm:existence}, if $\supp\phi\subset B_{R}$, where $B_{R}\subset\mr^N\times\mr$ is the ball of radius $R$ centred at $0$, then $\phi(\theta_{t^*})$ and $\phi(\theta_{t^*+\varepsilon})$ remain compactly supported uniformly in $\varepsilon$ and their support is included in $ B_{R+CT^{1/p}}$.
Besides,
$$\phi(\theta_{t^*}(x,\xi))-\phi(\theta_{t^*+\varepsilon}(x,\xi))\longrightarrow 0\qquad\forall (x,\xi)\in\mr^N\times\mr$$
and since $F^+$ takes values in $[0,1]$, the claim follows.

{\em Step 3:}
Now, it only remains to show that $F^+(t^*)$ is a kinetic function on $X=\Omega\times\mr^N$ for all $t^*\in[0,T)$. Towards this end, we observe that for all $t^*\in[0,T)$
$$F_n(t^*,x,\xi):=\frac{1}{\varepsilon_n}\int_{t^*}^{t^*+\varepsilon_n}F(t,x,\xi)\,\dif t$$
is a kinetic function on $X=\Omega\times\mr^N$ and by \eqref{integrov} the assumptions of Lemma \ref{kinetcomp} are fulfilled\footnote{Note that we may assume without loss of generality that the $\sigma$-field $\mathcal{F}$ is countably generated and hence, according to \cite[Proposition 3.4.5]{cohn}, the space $L^1(\Omega)$ is separable.}. Accordingly, there exists a kinetic function $F^{*,+}$
and a subsequence $(n^*_k)$ such that, on the one hand,
\begin{equation*}
F_{n_k^*}(t^*)\overset{w^*}{\longrightarrow} F^{*,+}\qquad \text{in}\qquad L^\infty(\Omega\times\mr^N\times\mr).
\end{equation*}
Since, on the other hand, we have due to {\em Step 2} that
\begin{equation*}
F_{n^*_k}(t^*)\longrightarrow F^+(t^*)\qquad \text{in}\qquad \mathcal{D}'(\mr^N\times\mr)\quad\text{a.s.},
\end{equation*}
we deduce that $F^+(t^*)=F^{*,+}$ for all $t^*\in[0,T)$, which completes the proof.

{\em Step 4:}
The proof of existence of the left-continuous representative $F^-$ on $(0,T]$ can be carried out similarly and \eqref{1s1} then follows immediately from \eqref{1s}.
\end{proof}
\end{prop}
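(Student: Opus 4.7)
The plan is to extract a bounded-variation structure from the kinetic formulation \eqref{eq:weakkinformul}, use this to build one-sided representatives of the pairings $\langle F(t),\phi(\theta_t)\rangle$, and finally promote those representatives to genuine kinetic functions via Young-measure compactness. Fixing $\phi \in C^1_c(\mr^N \times \mr)$ and $\alpha \in C^1_c([0,T))$ and testing \eqref{eq:weakkinformul}, I would integrate by parts in $t$ and invoke the stochastic Fubini theorem to absorb the It\^o integral and the $G^2$ drift into a process $g_\phi(t) = \langle F(t),\phi(\theta_t)\rangle + (\text{a.s.\ continuous in }t)$. The resulting identity exhibits $\partial_t g_\phi$ as a bounded Radon measure (coming from the kinetic measure $m$), so $g_\phi \in BV([0,T])$ almost surely and therefore admits left- and right-continuous representatives that coincide except on an at most countable set. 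Subtracting the continuous corrections transfers the same dichotomy to $\langle F(t),\phi(\theta_t)\rangle$ itself.

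I would then upgrade this pointwise-in-$\phi$ statement to a simultaneous one by fixing a countable dense subset $\mathcal{D}_1 \subset C^1_c(\mr^N \times \mr)$ and intersecting the corresponding full-measure events into $\Omega_0$, then extending by density to all $\phi \in C^1_c$ using $0 \le F \le 1$ together with the uniform compactness of $\supp\,\phi(\theta_t)$ provided by Theorem \ref{thm:existence}. To promote the distributional right-limit at $t^*$ to an actual kinetic function $F^{*,+}$, I would apply Lemma \ref{kinetcomp} to the averaged sequence $F_n(t^*) := \varepsilon_n^{-1} \int_{t^*}^{t^*+\varepsilon_n} F(t)\, \dif t$; the moment bound \eqref{integrov} supplies its hypotheses, while Lebesgue differentiation combined with volume preservation of $\pi$ (so that $\langle F(t),\phi(\theta_t)\rangle = \langle F(t,\pi_t),\phi\rangle$) identifies the weak-$*$ limit with the right-continuous representative constructed above.

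The main obstacle I expect is the time dependence of the test function $\phi(\theta_t)$: the $BV$ argument only controls pairings against a fixed $\phi$, yet the conclusion demands convergence of $\langle F^+(t^*+\varepsilon),\phi(\theta_{t^*+\varepsilon})\rangle$ as $\varepsilon \downarrow 0$. My plan is to split
\begin{equation*}
\langle F^+(t^*+\varepsilon),\phi(\theta_{t^*+\varepsilon})\rangle - \langle F^+(t^*),\phi(\theta_{t^*})\rangle
\end{equation*}
into a flow-drift piece $\langle F^+(t^*+\varepsilon),\phi(\theta_{t^*+\varepsilon})-\phi(\theta_{t^*})\rangle$, handled by dominated convergence thanks to the uniform compact support of the $\phi(\theta_t)$'s (controlled by a ball of radius $R+CT^{1/p}$ via Theorem \ref{thm:existence}) and $0 \le F \le 1$, and a fixed-$\phi$ residue $\langle F^+(t^*+\varepsilon,\pi_{t^*}) - F^+(t^*,\pi_{t^*}),\phi\rangle$, handled by the previously established right-continuity in the fixed-test-function sense. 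The left-continuous representative $F^-$ on $(0,T]$ is built by a symmetric argument, and \eqref{1s1} then follows from a Fubini argument: for each $\phi \in \mathcal{D}_1$ the two one-sided representatives disagree almost surely on an at most countable set of times, so the set of $t \in (0,T)$ at which they disagree on a positive-measure $\omega$-event has Lebesgue measure zero.
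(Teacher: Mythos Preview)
Your proposal is correct and matches the paper's approach in all essentials: the $BV$ structure extracted from the kinetic measure, the countable dense $\mathcal{D}_1$ with density extension, the splitting plus dominated convergence for the flow drift, Young-measure compactness (Lemma~\ref{kinetcomp}) applied to time averages, and the Fubini argument for $\mathcal{A}$. One orientation remark: the $BV$ argument already delivers right-continuity of $t\mapsto\langle F(t),\phi(\theta_t)\rangle$ directly (this is the pairing that appears in \eqref{eq:weakkinformul}), so your splitting is actually needed in the \emph{reverse} direction---to obtain $\langle F^+(t^*+\varepsilon),\phi\rangle\to\langle F^+(t^*),\phi\rangle$ against a time-independent $\phi$, which is what identifies the weak-$*$ limit of the averages with $F^+(t^*)$ for \emph{every} $t^*$ (Lebesgue differentiation alone would only give a.e.\ $t^*$).
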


Remark that according to Proposition \ref{limits}, a stronger version of \eqref{eq:weakkinformul} holds true provided $F$ is replaced by $F^+$ or $F^-$. The precise result is presented in the following corollary.

\begin{cor}\label{cor:strongerversion}
Let $F$ be a generalized kinetic solution to \eqref{eq} and let $F^-$ and $F^+$ be its left- and right-continuous representatives. Then the couples $(F^+,m)$ and $(F^-,m)$, respectively, satisfy for all $t\in(0,T)$ and every $\phi\in C^1_c(\mr^N\times\mr)$, a.s.,
\begin{align*}
\langle F^+(t),\phi(\theta_t)\rangle&=\langle F_{0},\phi\rangle-\int_0^t\langle\partial_\xi F g,\phi(\theta_s)\rangle\dif W+\frac{1}{2}\int_0^t\langle\partial_\xi(G^2\partial_\xi F,\phi(\theta_s)\rangle\dif s-m\big(\ind_{[0,t]}\partial_{\xi}\phi(\theta_s)\big)
\end{align*}
and
\begin{align*}
\langle F^-(t),\phi(\theta_t)\rangle&=\langle F_{0},\phi\rangle-\int_0^t\langle\partial_\xi F g,\phi(\theta_s)\rangle\dif W+\frac{1}{2}\int_0^t\langle\partial_\xi(G^2\partial_\xi F,\phi(\theta_s)\rangle\dif s-m\big(\ind_{[0,t)}\partial_{\xi}\phi(\theta_s)\big).
\end{align*}
Furthermore, setting $F^-(0):=F_0$, then for every $\phi\in C^1_c(\mr^N\times\mr)$ and $t^*\in [0,T)$ it holds true
\begin{equation}\label{eq:cont1}
\big\langle F^+(t^*)-F^-(t^*),\phi(\theta_{t^*})\big\rangle=-m\big(\ind_{\{t^*\}}\partial_\xi\phi(\theta_{t^*})\big)\quad\text{a.s}.
\end{equation}
In particular, if $\mathcal{A}\subset(0,T)$ is the set of full measure constructed in Proposition \ref{limits}, then for all $t^*\in\mathcal{A}$, $m$ does not have atom at $t^*$ a.s.

\begin{proof}
Consider \eqref{eq:weakkinformul} with a test function of the form $(s,x,\xi)\mapsto \phi(x,\xi)\alpha_\varepsilon(s)$ where $\phi\in C^1_c(\mr^N\times\mr)$ and
$$\alpha_\varepsilon(s)=\begin{cases}
						1,&\quad s\leq t,\\
						1-\frac{s-t}{\varepsilon},&\quad t\leq s\leq t+\varepsilon,\\
						0,&\quad t+\varepsilon\leq s.
						\end{cases}$$
Due to Proposition \ref{limits} we obtain convergence of the left hand side of \eqref{eq:weakkinformul} as $\varepsilon\rightarrow0$:
\begin{align*}
\int_0^T\langle F(s),\phi(\theta_s)\rangle\,\partial_s\alpha_\varepsilon(s)\,\dif s=-\frac{1}{\varepsilon}\int_t^{t+\varepsilon}\langle F(s),\phi(\theta_s)\rangle\,\dif s\longrightarrow-\big\langle F^+(t),\phi(\theta_t)\big\rangle.
\end{align*}
Then we have
\begin{align*}
m\big(\alpha_\varepsilon(s) \partial_\xi\phi(\theta_s)\big)&=\int_{[0,t)\times\mr^N\times\mr}\partial_\xi\phi(\theta_s)\,\dif m+\int_{[t,t+\varepsilon)\times\mr^N\times\mr}\Big(1-\frac{s-t}{\varepsilon}\Big)\partial_\xi\phi(\theta_s)\,\dif m\\
&\longrightarrow \int_{[0,t]\times\mr^N\times\mr}\partial_\xi\phi(\theta_s)\,\dif m
\end{align*}
and the convergence of the remaining two terms on the right hand side of \eqref{eq:weakkinformul} is obvious because they are continuous in $t$.
Therefore, we have justified the equation for $(F^+,m)$.

Concerning $(F^-,m)$, we apply a similar approach but use the function
$$\alpha_\varepsilon(s)=\begin{cases}
                         1,&s\leq t-\varepsilon,\\
			 \frac{t-s}{\varepsilon},& t-\varepsilon\leq s\leq t,\\
			 0,&t\leq s
                        \end{cases}$$
instead.

Finally, the formula \eqref{eq:cont1} follows from \eqref{eq:weakkinformul} by testing by $(t,x,\xi)\mapsto\phi(x,\xi)\alpha_\varepsilon(t)$ where
$$\alpha_\varepsilon(t)=\frac{1}{\varepsilon}\min\big\{(t-t^*+\varepsilon)^+,(t-t^*-\varepsilon)^-\big\}$$
and sending $\varepsilon\rightarrow 0$. As a consequence, $m$ has an atom at $t^*$ if and only if $F^-(t^*)\neq F^+(t^*)$ hence, in view of \eqref{1s1} the proof is complete.
\end{proof}
\end{cor}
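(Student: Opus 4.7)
The plan is to choose the time-dependent test function $\alpha$ in the weak formulation \eqref{eq:weakkinformul} so as to approximate, respectively, the indicators $\ind_{[0,t]}$, $\ind_{[0,t)}$ and $\ind_{\{t^*\}}$, and then pass to the limit using the one-sided continuity statements of Proposition \ref{limits}. Since the kinetic measure part $m(\alpha \partial_\xi \phi(\theta))$ is linear and bounded in $\alpha$, and $\phi(\theta_s)$ is supported uniformly in $s$ in a compact set (by the Hölder bounds from Theorem \ref{thm:existence}), all dominated convergence steps are routine once pointwise convergence is established.

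\textbf{The $F^+$ identity.} For fixed $t \in (0,T)$ and $\varepsilon$ small enough that $t+\varepsilon<T$, I would plug $\alpha_\varepsilon(s)$ equal to $1$ on $[0,t]$, dropping linearly to $0$ on $[t,t+\varepsilon]$ and vanishing thereafter into \eqref{eq:weakkinformul}. Then $\partial_s\alpha_\varepsilon=-\tfrac{1}{\varepsilon}\ind_{[t,t+\varepsilon]}$, and the left-hand side of \eqref{eq:weakkinformul} becomes $-\tfrac{1}{\varepsilon}\int_t^{t+\varepsilon}\langle F(s),\phi(\theta_s)\rangle\,\dd s$, which by right-continuity (combined with $F=F^+$ a.e., Proposition \ref{limits}) converges to $-\langle F^+(t),\phi(\theta_t)\rangle$. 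The stochastic and Lebesgue integrals on the right are a.s.\ continuous functions of the upper limit, hence pass to the limit trivially. Finally, $\alpha_\varepsilon\to \ind_{[0,t]}$ pointwise and boundedly, so $m(\alpha_\varepsilon\partial_\xi\phi(\theta))\to m(\ind_{[0,t]}\partial_\xi\phi(\theta))$ by dominated convergence. The $F^-$ identity is obtained analogously with the mirror-image tent $\alpha_\varepsilon$ equal to $1$ on $[0,t-\varepsilon]$, decreasing linearly to $0$ on $[t-\varepsilon,t]$ and vanishing on $[t,T)$; this time the pointwise limit is $\ind_{[0,t)}$.

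\textbf{The jump formula \eqref{eq:cont1}.} For $t^*\in(0,T)$ I would use the tent $\alpha_\varepsilon(s)=\tfrac{1}{\varepsilon}\min\{(s-t^*+\varepsilon)^+,(t^*+\varepsilon-s)^+\}$, supported in $[t^*-\varepsilon,t^*+\varepsilon]$ with peak $1$ at $t^*$. Its derivative gives $\tfrac{1}{\varepsilon}\int_{t^*-\varepsilon}^{t^*}\langle F,\phi(\theta)\rangle\dd s-\tfrac{1}{\varepsilon}\int_{t^*}^{t^*+\varepsilon}\langle F,\phi(\theta)\rangle\dd s$, which converges to $\langle F^-(t^*)-F^+(t^*),\phi(\theta_{t^*})\rangle$ by Proposition \ref{limits}. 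Since $\alpha_\varepsilon\to\ind_{\{t^*\}}$ pointwise, boundedly and a.e.-zero, the two a.s.-continuous integrals on the right vanish, while the measure term tends to $m(\ind_{\{t^*\}}\partial_\xi\phi(\theta_{t^*}))$. (At $t^*=0$ one uses $F^-(0)=F_0$ and $\alpha_\varepsilon(0)=1$; the boundary contribution $\langle F_0,\phi\rangle$ then cancels exactly.) Equivalently, one may simply subtract the $F^-$ and $F^+$ identities at $t=t^*$, which is a bit cleaner.

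\textbf{No atom on $\mathcal{A}$.} For $t^*\in\mathcal{A}$, \eqref{1s1} forces the left-hand side of \eqref{eq:cont1} to vanish for all $\phi\in C^1_c(\R^N\times\R)$, so $m(\ind_{\{t^*\}}\partial_\xi[\phi\circ\theta_{t^*}])=0$. Since $\theta_{t^*}$ is a $C^2$-diffeomorphism, $\phi\mapsto\phi\circ\theta_{t^*}$ is a bijection of $C^1_c$, so this amounts to $\int\partial_\xi\psi\,\dd m|_{\{t^*\}}=0$ for every $\psi\in C^1_c(\R^N\times\R)$, i.e.\ $\partial_\xi m|_{\{t^*\}}=0$ in $\mathcal{D}'$. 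A nonnegative Radon measure that is translation-invariant in $\xi$ and finite in mass must be the zero measure, hence $m(\{t^*\}\times\R^N\times\R)=0$. The only subtlety worth attention is making the pointwise-plus-bounded convergence for $m(\alpha_\varepsilon\partial_\xi\phi(\theta))$ rigorous (uniform compact support of $s\mapsto\phi(\theta_s)$ via Theorem \ref{thm:existence}(ii) and finiteness of $m$), but this is routine given the setup.
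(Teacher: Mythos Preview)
Your proof is correct and follows essentially the same approach as the paper: the same piecewise-linear cutoffs $\alpha_\varepsilon$ are used to extract $\langle F^\pm(t),\phi(\theta_t)\rangle$ from the weak formulation, and the same tent function is used for the jump identity. Your final ``no atom on $\mathcal{A}$'' paragraph is in fact more explicit than the paper's one-line conclusion, spelling out why $\partial_\xi(m|_{\{t^*\}})=0$ forces the finite nonnegative slice $m|_{\{t^*\}}$ to vanish.
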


\begin{lemma}\label{lem:equil}
Let $F$ be a generalized kinetic solution to \eqref{eq} and let $F^+$ be its right-continuous representative. Assume that the initial condition $F_0$ is at equilibrium: there exists $u_0\in L^1(\Omega\times\mr^N)$ such that $F_0=\ind_{u_0>\xi}$. Then $F^+(0)=F_0$ and in particular the corresponding kinetic measure $m$ has no atom at $0$ a.s., i.e. the restriction of $m$ to $\{0\}\times\mr^N\times\mr$ vanishes a.s.

\begin{proof}
%
Let $m_0$ be the restriction of $m$ to $\{0\}\times\mr^N\times\mr$. Then we deduce from \eqref{eq:cont1} at $t^*=0$ (recall that $F^-(0)=F_0$) that
\begin{equation}\label{m0}
\langle F^+(0)-\ind_{u_0>\xi},\phi\rangle=-m_0\big(\partial_\xi \phi\big)\qquad\forall \phi\in C^1_c(\mr^N\times\mr)\quad\text{a.s}.
\end{equation}
Let $H_R$ be a smooth truncation on $\R$ such that $0\leq H_R\leq 1$, $H_R(\xi)= 1$ if $|\xi|\leq R$ and $H_R(\xi)= 0$ if $|\xi|\geq 2R$, $|\partial_\xi H_R|
\leq 1$. For $\phi\in C^1_c(\mr^N)$ we intend to pass to the limit $R\to\infty$ in
\begin{equation}\label{fg67}
\langle F^+(0)-\ind_{u_0>\xi},\phi H_R\rangle=-m_0\big(\phi\partial_\xi H_R\big)
\end{equation}
Since $m$ and consequently $m_0$ is a finite measure a.s., the right hand side converges to $0$ a.s. as $R\to \infty$ due to the dominated convergence theorem, whereas for the left hand side, we write
$$\langle F^+(0)-\ind_{u_0>\xi},\phi H_R\rangle=\langle F^+(0)-\ind_{0>\xi},\phi H_R\rangle-\langle\ind_{u_0>\xi}-\ind_{0>\xi},\phi H_R\rangle.$$
We make use of Remark \ref{chi} and there introduced shorthand notation $\chi_{u_0}=\chi_{\ind_{u_0>\xi}}$,
which together with the dominated convergence theorem implies the convergence
$$\langle\ind_{u_0>\xi}-\ind_{0>\xi},\phi H_R\rangle\longrightarrow\langle \chi_{u_0},\phi\rangle.$$
Consequently, we deduce that also the remaining term from \eqref{fg67}, i.e.
$$\langle F^+(0)-\ind_{0>\xi},\phi H_R\rangle$$
is converging since all the other terms converge.
Therefore, necessarily,
$$\langle F^+(0)-\ind_{0>\xi},\phi H_R\rangle\longrightarrow\langle F^+(0)-\ind_{0>\xi},\phi\rangle.$$

As a consequence, we obtain
$$\int_\mr F^+(0,\xi)-\ind_{0>\xi}\,\dif\xi=u_0\quad\text{a.s.}$$
and using this fact one can easily observe that
$$p(\xi):=\int_{-\infty}^\xi\ind_{u_0>\zeta}-F^+(0,\zeta)\,\dif \zeta=\int_{-\infty}^\xi(\ind_{u_0>\zeta}-\ind_{0>\zeta})-(F^+(0,\zeta)-\ind_{0>\zeta})\,\dif \zeta\geq0\quad\text{a.s.}$$
Indeed, $p(-\infty)=p(\infty)=0$ and $p$ is increasing if $\xi\in (-\infty,u_0)$ and decreasing if $\xi\in(u_0,\infty)$.
However, it follows from \eqref{m0} that $p=-m_0$ and since $m_0$ is nonnegative, we deduce that $m_0\equiv 0$.
\end{proof}
\end{lemma}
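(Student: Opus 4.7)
The plan is to apply the atomicity formula \eqref{eq:cont1} from Corollary \ref{cor:strongerversion} at $t^*=0$. Let $m_0$ denote the restriction of $m$ to $\{0\}\times\R^N\times\R$. Since $\theta_0$ is the identity and by convention $F^-(0)=F_0=\ind_{u_0>\xi}$, the formula reduces to
\begin{equation*}
\langle F^+(0)-\ind_{u_0>\xi},\phi\rangle=-m_0(\partial_\xi\phi),\qquad\phi\in C^1_c(\R^N\times\R).
\end{equation*}
The entire lemma then boils down to proving $m_0\equiv 0$ almost surely, as the displayed identity forces $F^+(0)=\ind_{u_0>\xi}$ distributionally, hence a.e.

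The first step is to eliminate the $\xi$-derivative on the right-hand side by truncation. Testing with $\phi(x,\xi)=\phi_1(x)H_R(\xi)$, for $\phi_1\in C^1_c(\R^N)$ and a smooth cutoff $H_R$ equal to $1$ on $[-R,R]$ and vanishing outside $[-2R,2R]$, and then letting $R\to\infty$, the right-hand side $-m_0(\phi_1\partial_\xi H_R)$ vanishes since $m_0$ is almost surely a finite Borel measure and $\partial_\xi H_R$ is uniformly bounded with support escaping to infinity. For the left-hand side, I would rewrite $F^+(0)-\ind_{u_0>\xi}=\chi_{F^+(0)}-\chi_{u_0}$ using the notation of Remark \ref{chi}. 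Both $\chi$ terms are integrable: $\int|\chi_{u_0}|\,\dif\xi=|u_0|\in L^1(\R^N)$, and $\chi_{F^+(0)}$ is integrable on $\supp(\phi_1)\times\R$ by the first-moment bound in \eqref{integrov} combined with Remark \ref{chi}. Dominated convergence then gives, for a.e.\ $x$, almost surely,
\begin{equation*}
\int_\R\bigl(F^+(0,x,\xi)-\ind_{u_0(x)>\xi}\bigr)\,\dif\xi=0.
\end{equation*}

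Next I would introduce the primitive
\begin{equation*}
p(x,\xi):=\int_{-\infty}^\xi\bigl(\ind_{u_0>\zeta}-F^+(0,x,\zeta)\bigr)\,\dif\zeta
\end{equation*}
and show $p\ge 0$ pointwise. The display above yields $p(x,+\infty)=0$, while $p(x,-\infty)=0$ is trivial. The integrand is nonnegative on $\{\zeta<u_0\}$ (since $F^+(0)\le 1$) and nonpositive on $\{\zeta>u_0\}$ (since $F^+(0)\ge 0$), so $p$ increases on $(-\infty,u_0)$ and decreases on $(u_0,+\infty)$; combined with the vanishing boundary values, this forces $p\ge 0$.

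Finally, I would connect $p$ to $m_0$. Testing the main identity with $\phi_1(x)\psi(\xi)$ and denoting by $\nu_{\phi_1}$ the $\xi$-marginal of the finite measure $\phi_1\,\dif m_0$, the relation reads $g_{\phi_1}=\partial_\xi\nu_{\phi_1}$ in $\mathcal{D}'(\R)$, where
\begin{equation*}
g_{\phi_1}(\xi):=\int\phi_1(x)\bigl(F^+(0,x,\xi)-\ind_{u_0>\xi}\bigr)\,\dif x\in L^1(\R).
\end{equation*}
Integrating once and using that the antiderivative of $g_{\phi_1}$ vanishes at $\pm\infty$ (again by Remark \ref{chi} and the moment bound), this promotes to the pointwise identification $\nu_{\phi_1}((-\infty,\xi])=-\int\phi_1(x)\,p(x,\xi)\,\dif x$. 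For $\phi_1\ge 0$, the left-hand side is nonnegative while the right-hand side is nonpositive, so both vanish identically; hence $\nu_{\phi_1}\equiv 0$ for every nonnegative $\phi_1$, which forces $m_0=0$. I expect the main technical delicacy to lie exactly in this last distributional-to-pointwise passage: promoting the equality of the distribution $g_{\phi_1}$ and the derivative of the measure $\nu_{\phi_1}$ to an equality of bounded antiderivatives, pinned down by their common vanishing behaviour at $-\infty$.
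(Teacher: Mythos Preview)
Your proof is correct and follows essentially the same route as the paper: apply \eqref{eq:cont1} at $t^*=0$, remove the $\xi$-derivative by a truncation $H_R\to 1$, define the primitive $p$, show $p\ge 0$ from the sign structure of $\ind_{u_0>\xi}-F^+(0)$, and conclude $m_0=0$ since $m_0\ge 0$ while $p=-m_0$. The only minor point is that the integrability of $\chi_{F^+(0)}$ on $\supp\phi_1\times\R$ does not come literally from \eqref{integrov} and Remark~\ref{chi} (the latter needs a moment with $p>1$), but rather from the fact that $F^+(0)$ is a kinetic function on $\Omega\times\R^N$ (Proposition~\ref{limits}) whose Young measure vanishes at infinity, so that $\int_\R|\chi_{F^+(0)}(x,\xi)|\,\dif\xi=\int_\R|\xi|\,\dif\nu^+_{0,x}(\xi)\in L^1(\Omega\times\R^N)$; the paper sidesteps this by deducing convergence of $\langle F^+(0)-\ind_{0>\xi},\phi H_R\rangle$ indirectly from the equation before identifying the limit.
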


The following result allows us to restart the evolution given by \eqref{eq} at an arbitrary time $t^*\in(0,T)$.

\begin{lemma}\label{lemma:neworigin}
Let $F$ be a generalized kinetic solution to \eqref{eq} on $[0,T]$ with the initial datum $F_0$. Then for every $t^*\in[0,T)$, $t\mapsto F(t^*+t)$ is a generalized kinetic solution to \eqref{eq} on $[0,T-t^*]$ with the initial datum $F^-(t^*)$.

\begin{proof}
Let $\alpha\in C^1_c([0,T))$, $\phi\in C^1_c(\mr^N\times\mr)$ and test \eqref{eq:weakkinformul} by $(t,x,\xi)\mapsto\phi(x,\xi)\alpha_\varepsilon(t)$ where
$$\alpha_\varepsilon(t)=\begin{cases}
						\alpha(t),&\quad t\leq t^*,\\
						\alpha(t)\Big(1-\frac{t-t^*}{\varepsilon}\Big),&\quad t^*\leq t\leq t^*+\varepsilon,\\
						0,&\quad t^*+\varepsilon\leq t.
						\end{cases}$$
In the limit $\varepsilon\rightarrow0$ we infer
\begin{align}\label{eq:cont2}
\begin{aligned}
\int_0^{t^*}&\langle F(t),\phi(\theta_t)\rangle\partial_t\alpha(t)\,\dif t-\langle F^+(t^*),\phi(\theta_{t^*})\rangle\alpha(t^*)+\langle F_0,\phi\rangle\alpha(0)\\
&=\int_0^{t^*}\langle\partial_\xi F g,\phi(\theta_t)\rangle\alpha(t)\,\dif W-\frac{1}{2}\int_0^{t^*}\langle\partial_\xi(G^2\partial_\xi F),\phi(\theta_t)\rangle\alpha(t)\,\dif t+m\big(\ind_{[0,t^*]}\alpha\partial_\xi\phi(\theta_t)\big).
\end{aligned}
\end{align}
Subtracting \eqref{eq:cont2} from \eqref{eq:weakkinformul} and using \eqref{eq:cont1} leads to
\begin{align*}
\begin{aligned}
\int_{t^*}^T&\langle F(t),\phi(\theta_t)\rangle\partial_t\alpha(t)\,\dif t+\langle F^-(t^*),\phi(\theta_{t^*})\rangle\alpha(t^*)\\
&=\int^T_{t^*}\langle\partial_\xi F g,\phi(\theta_t)\rangle\alpha(t)\,\dif W-\frac{1}{2}\int^T_{t^*}\langle\partial_\xi(G^2\partial_\xi F),\phi(\theta_t)\rangle\alpha(t)\,\dif t+m\big(\ind_{[t^*,T]}\alpha\partial_\xi\phi(\theta_t)\big).
\end{aligned}
\end{align*}
Finally we take $(t,x,\xi)\mapsto\phi(\pi_{t^*}(x,\xi))\alpha(t)$ as a test function and deduce that
\begin{align*}
\begin{aligned}
\int_{t^*}^T&\langle F(t),\phi(\theta_{t^*,t})\rangle\partial_{t}\alpha(t)\,\dif t+\langle F^-(t^*),\phi\rangle\alpha(t^*)\\
&=\int^T_{t^*}\langle\partial_\xi F g,\phi(\theta_{t^*,t})\rangle\alpha(t)\,\dif W-\frac{1}{2}\int^T_{t^*}\langle\partial_\xi(G^2\partial_\xi F),\phi(\theta_{t^*,t})\rangle\alpha(t)\,\dif t+m\big(\ind_{[t^*,T]}\alpha\partial_\xi\phi(\theta_{t^*,t})\big).
\end{aligned}
\end{align*}
As a consequence, $t\mapsto F(t^*+t)$ is a kinetic solution to \eqref{eq} on $[0,T-t^*]$ with initial condition $F^-(t^*)$.
\end{proof}
\end{lemma}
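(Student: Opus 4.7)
The plan is to start from the defining identity \eqref{eq:weakkinformul} for $F$ on $[0,T]$ and localize it in time by means of a suitable approximation of $\ind_{[0,t^*]}$. Concretely, I would plug the test function $(t,x,\xi)\mapsto\phi(x,\xi)\alpha_\varepsilon(t)$ into \eqref{eq:weakkinformul}, where $\alpha\in C^1_c([0,T))$ is arbitrary and $\alpha_\varepsilon$ agrees with $\alpha$ on $[0,t^*]$, is linearly interpolated to $0$ on $[t^*,t^*+\varepsilon]$, and vanishes on $[t^*+\varepsilon,T]$. Using Proposition \ref{limits} to evaluate $\lim_{\varepsilon\to 0}\frac1\varepsilon\int_{t^*}^{t^*+\varepsilon}\langle F(t),\phi(\theta_t)\rangle\dif t=\langle F^+(t^*),\phi(\theta_{t^*})\rangle$ and the dominated convergence theorem for the stochastic and measure terms, I recover a kinetic formulation on $[0,t^*]$ with the boundary value $\langle F^+(t^*),\phi(\theta_{t^*})\rangle\alpha(t^*)$ and with the measure contribution $m(\ind_{[0,t^*]}\alpha\partial_\xi\phi(\theta_t))$.

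Next I would subtract this localized identity from the original one \eqref{eq:weakkinformul} posed on $[0,T]$. This produces an identity on $[t^*,T]$ whose left-hand side involves $\langle F^-(t^*)-F^+(t^*),\phi(\theta_{t^*})\rangle\alpha(t^*)$ as a boundary term; applying \eqref{eq:cont1} from Corollary \ref{cor:strongerversion} rewrites this extra jump as $-m(\ind_{\{t^*\}}\partial_\xi\phi(\theta_{t^*}))\alpha(t^*)$, which recombines with the measure term to give exactly $m(\ind_{[t^*,T]}\alpha\partial_\xi\phi(\theta_t))$. In this way the atom of $m$ at $t^*$, if any, is automatically absorbed into the restarted kinetic measure, so that the boundary contribution at time $t^*$ is precisely $\langle F^-(t^*),\phi(\theta_{t^*})\rangle\alpha(t^*)$.

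The remaining task is to turn this intermediate identity into the one defining a generalized kinetic solution on $[0,T-t^*]$, which requires replacing the flow $\theta_t=\theta_{0,t}$ by the relative inverse flow $\theta_{t^*,t}$ from time $t^*$. The flow property from Theorem \ref{thm:existence} gives $\pi_t=\pi_{t^*,t}\circ\pi_{t^*}$, hence $\theta_t\circ\pi_{t^*}=\theta_{t^*,t}$. Since $\pi_{t^*}\in C^2$ preserves $C^1_c$, I may test the previous identity against $\phi(\pi_{t^*}(\cdot))$ instead of $\phi$; the transformation substitutes $\phi(\theta_{t^*,t})$ for $\phi(\theta_t)$ everywhere, and after the translation $s=t-t^*$ one obtains precisely \eqref{eq:weakkinformul} for $s\mapsto F(t^*+s)$ on $[0,T-t^*]$ with initial datum $F^-(t^*)$ and translated kinetic measure. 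The estimate \eqref{integrov} and the requisite measurability properties transfer immediately by restriction.

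The main obstacle I anticipate is the bookkeeping of the atom of $m$ at the restart time: we have to ensure that the jump $F^+(t^*)-F^-(t^*)$, which may be nonzero for an exceptional $t^*$, is identified with the atom so that it is correctly attributed to the restarted measure rather than lost at the boundary. This is where \eqref{eq:cont1} and the careful choice of the one-sided representative $F^-(t^*)$ as the restarted initial datum play the crucial role; everything else (flow composition, Fubini for the stochastic integral, change of variables in time) is routine once this point is handled.
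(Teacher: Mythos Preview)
Your proposal is correct and follows essentially the same approach as the paper: the same cutoff $\alpha_\varepsilon$, the same use of Proposition~\ref{limits} to extract the boundary term $\langle F^+(t^*),\phi(\theta_{t^*})\rangle\alpha(t^*)$, the subtraction from \eqref{eq:weakkinformul} together with \eqref{eq:cont1} to convert $F^+(t^*)$ into $F^-(t^*)$ while absorbing the atom of $m$ into $m(\ind_{[t^*,T]}\cdots)$, and finally the substitution $\phi\mapsto\phi(\pi_{t^*})$ to pass from $\theta_t$ to $\theta_{t^*,t}$. Your discussion of the atom at $t^*$ is exactly the point the paper handles via \eqref{eq:cont1}, so nothing is missing.
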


We proceed with a result that plays the key role in our proof of reduction of generalized kinetic solution to a kinetic one, Theorem \ref{thm:reduction}, and the $L^1$-contraction property, Corollary \ref{cor:contraction}. To be more precise, the relevant estimate that we need to verify in case of two generalized kinetic solutions $F_1,F_2$ with initial conditions $F_{1,0}$ and $F_{2,0}$, respectively, is the following:
$$\int_{\mr^N\times\mr} F_1(t) (1-F_2(t))\,\dif x\,\dif \xi\leq\int_{\mr^N\times\mr} F_{1,0} (1-F_{2,0})\,\dif x\,\dif \xi.$$
Indeed, setting $F:=F_1=F_2$ then leads to the Reduction Theorem and using the identity
$$\int_{\mr} \ind_{u_1(t)>\xi} (1-\ind_{u_2(t)>\xi})\,\dif \xi=(u_1(t)-u_2(t))^+$$
gives the $L^1$-contraction property. However, it is not possible to multiply the two equations directly, i.e. the equation for $F_1$ and the one for $1-F_2$ as it is necessary to mollify first. This is taken care of in Proposition \ref{prop:doubling}.
In order to simplify the notation, we denote $\overline{F}=1-F$.

%
%
%
%

In the sequel, let $(\varrho_\delta)$ be an approximation to the identity on $\mr^N\times\mr$ and $(\kappa_R)$ be a truncation on $\mr^N$ such that $\kappa_R\equiv 1$ on $B_R$, $\supp\kappa_R\subset B_{2R}$, $0\leq \kappa_R\leq 1$ and $|\nabla_x\kappa_R|\leq R^{-1}$.

\begin{prop}\label{prop:doubling}
Let $F_1,\, F_2$ be generalized kinetic solutions to \eqref{eq}. Let $s,\,t\in[0,T)$, $s\leq t$, be such that neither of the kinetic measures $m_1$ and $m_2$ has atom at $s$ a.s. Then it holds true
\begin{align}\label{eq:doubling}
\begin{aligned}
\E\int&\,\kappa_R(x)\langle F_1^+(t),\varrho_\delta((x,\xi)-\theta_{s,t}(\cdot,\cdot))\rangle_{z,\sigma}\langle \overline{F}_2^+(t),\varrho_\delta((x,\xi)-\theta_{s,t}(\cdot,\cdot))\rangle_{y,\zeta}\dif x\dif\xi\\
&\quad-\E\int\kappa_R(x)\langle F_1^-(s),\varrho_\delta((x,\xi)- (\cdot,\cdot))\rangle_{z,\sigma}\langle \overline{F}_2^-(s),\varrho_\delta((x,\xi)- (\cdot,\cdot))\rangle_{y,\zeta}\dif x\dif\xi\\
&\leq C(t-s)^{1/p}\bigg(\stred\int_{(s,t]}\int\dif m_1(r,z,\sigma)+\stred\int_{(s,t]}\int\dif m_2(r,y,\zeta)\bigg)\\
&\quad+C(t-s)^{1/p}\bigg(\stred\int_s^t\int |\zeta|^2\dif\nu^2_{r,y}(\zeta)\dif y\dif r+\stred\int_s^t\int |\sigma|^2\dif\nu_{r,z}^1(\sigma)\dif z\dif r\bigg)\\
&\quad+CR\delta(t-s)^{1+1/p}.
\end{aligned}
\end{align}

\begin{proof}
{\em Step 1:}
Applying Corollary \ref{cor:strongerversion} and Lemma \ref{lemma:neworigin} to the generalized kinetic solutions $F_1,F_2$ and to the mollifier $\varrho_\delta$ in place of a test function, we deduce that for all $(x,\xi)\in \mr^N\times\mr$
\begin{equation}\label{eq:stronger}
\begin{split}
\langle& F_1^+(t),\varrho_\delta((x,\xi)-\theta_{s,t}(\cdot,\cdot))\rangle_{z,\sigma}\\
&=\langle F_1^-(s),\varrho_\delta((x,\xi)- (\cdot,\cdot))\rangle_{z,\sigma}-\int_s^t\big\langle\partial_\sigma F_1g,\varrho_\delta((x,\xi)-\theta_{s,r}(\cdot,\cdot))\big\rangle_{z,\sigma}\,\dif W\\
&\qquad-m_1\Big(\ind_{[s,t]}(\cdot)\partial_{\sigma}\varrho_\delta((x,\xi)-\theta_{s,r}(\cdot,\cdot))\Big)-\frac{1}{2}\int_s^t\big\langle G^2\partial_\sigma F_1,\partial_\sigma\varrho_\delta((x,\xi)-\theta_{s,r}(\cdot,\cdot))\big\rangle_{z,\sigma}\dif r
\end{split}
\end{equation}
and similarly
\begin{equation}\label{eq:stronger1}
\begin{split}
\langle& \overline{F}_2^+(t),\varrho_\delta((x,\xi)-\theta_{s,t}(\cdot,\cdot))\rangle_{y,\zeta}\\
&=\langle \overline{F}_2^-(s),\varrho_\delta((x,\xi)- (\cdot,\cdot))\rangle_{y,\zeta}+\int_s^t\big\langle\partial_\zeta F_2g,\varrho_\delta((x,\xi)-\theta_{s,r}(\cdot,\cdot))\big\rangle_{y,\zeta}\,\dif W\\
&\qquad+m_2\Big(\ind_{[s,t]}(\cdot)\partial_{\zeta}\varrho_\delta((x,\xi)-\theta_{s,r}(\cdot,\cdot))\Big)+\frac{1}{2}\int_s^t\big\langle G^2\partial_\zeta F_2,\partial_\zeta\varrho_\delta((x,\xi)-\theta_{s,r}(\cdot,\cdot))\big\rangle_{y,\zeta}\dif r
\end{split}
\end{equation}
Our notation in the above is the following: the variable $(x,\xi)$ is fixed in both \eqref{eq:stronger} and \eqref{eq:stronger1}; in \eqref{eq:stronger} we consider the test function $(z,\sigma)\mapsto\varrho_\delta((x,\xi)-\theta_{s,t}(z,\sigma))$ whereas in \eqref{eq:stronger1} we make use of $(y,\zeta)\mapsto\varrho_\delta((x,\xi)-\theta_{s,t}(y,\zeta))$.

We denote
\begin{align*}
\mathcal{I}_1(t)&:=-\int_s^t\big\langle\partial_\sigma F_1g,\varrho_\delta((x,\xi)-\theta_{s,r}(\cdot,\cdot))\big\rangle_{z,\sigma}\,\dif W,\\
\mathcal{I}_2(t)&:=\int_s^t\big\langle\partial_\zeta F_2g,\varrho_\delta((x,\xi)-\theta_{s,r}(\cdot,\cdot))\big\rangle_{y,\zeta}\,\dif W,
\end{align*}
and observe that
\begin{align*}
\mu_1:C_b([s,T])&\rightarrow\mr,\\
\alpha&\mapsto -m_1\Big(\alpha(\cdot)\,\partial_{\sigma}\varrho_\delta((x,\xi)-\theta_{s,r}(\cdot,\cdot)))\Big)-\frac{1}{2}\int_s^T\alpha(r)\big\langle G^2\partial_\sigma F_1,\partial_\sigma\varrho_\delta((x,\xi)-\theta_{s,r}(\cdot,\cdot))\big\rangle_{z,\sigma}\dif r,
\end{align*}
and
\begin{align*}
\mu_2:C_b([s,T])&\rightarrow\mr,\\
\alpha&\mapsto m_2\Big(\alpha(\cdot)\,\partial_{\zeta}\varrho_\delta((x,\xi)-\theta_{s,r}(\cdot,\cdot)))\Big)+\frac{1}{2}\int_s^T\alpha(r)\big\langle G^2\partial_\zeta F_2,\partial_\zeta\varrho_\delta((x,\xi)-\theta_{s,r}(\cdot,\cdot))\big\rangle_{y,\zeta}\dif r,
\end{align*}
are Radon measures hence
$$t\mapsto \mu_1([s,t]),\qquad t\mapsto \mu_2([s,t])$$
are c\`adl\`ag functions of bounded variation.

Multiplying \eqref{eq:stronger} and \eqref{eq:stronger1}, we have
\begin{align}\label{123}
\begin{aligned}
\E\langle F_1^+(t),&\,\varrho_\delta((x,\xi)-\theta_{s,t}(\cdot,\cdot))\rangle_{z,\sigma}\langle \overline{F}_2^+(t),\varrho_\delta((x,\xi)-\theta_{s,t}(\cdot,\cdot))\rangle_{y,\zeta}\\
&\qquad-\E\langle F_1^-(s),\varrho_\delta((x,\xi)- (\cdot,\cdot))\rangle_{z,\sigma}\langle \overline{F}_2^-(s),\varrho_\delta((x,\xi)- (\cdot,\cdot))\rangle_{y,\zeta}\\
&=\E\langle F_1^-(s),\varrho_\delta((x,\xi)- (\cdot,\cdot))\rangle_{z,\sigma}\mathcal{I}_2(t)+\stred\langle F_1^-(s),\varrho_\delta((x,\xi)-\theta_s(\cdot,\cdot))\rangle_{z,\sigma}\mu_2([s,t])\\
&\quad+\E\mathcal{I}_1(t)\langle \overline{F}_2^-(s),\varrho_\delta((x,\xi)- (\cdot,\cdot))\rangle_{y,\zeta}+\E\mathcal{I}_1(t)\mathcal{I}_2(t)+\E\mathcal{I}_1(t)\mu_2([s,t])\\
&\quad+\E\mu_1([s,t])\langle \overline{F}_2^-(s),\varrho_\delta((x,\xi)-\theta_s(\cdot,\cdot))\rangle_{y,\zeta}+\E\mu_1([s,t])\mathcal{I}_2(t)+\stred\mu_1([s,t])\mu_2([s,t])\\
&=J_1+\cdots+J_8.
\end{aligned}
\end{align}
For $J_5$, we apply the It\^o formula to the product of a continuous martingale and a c\`adl\`ag function of bounded variation, that is $\mathcal{I}_1(t)$ and $\mu_2([s,t])$, to deduce the following integration by parts formula (see e.g. \cite[Chapter II, Theorem 33]{protter})
\begin{align*}
\mathcal{I}_1(t)\mu_2([s,t])&=\int_s^t\mu_2([s,r))\,\dif\mathcal{I}_1(r)+\int_{(s,t]}\mathcal{I}_1(r)\,\dif\mu_2(r)
\end{align*}
which implies
$$ J_5=\stred\int_{(s,t]}\mathcal{I}_1(r)\,\dif\mu_2(r)$$
and similarly we obtain
$$ J_7=\stred\int_{(s,t]}\mathcal{I}_2(r)\,\dif\mu_1(r).$$
Next,
\begin{align*}
J_1=\E\int_s^t\langle F_1^-(s),\varrho_\delta((x,\xi)- (\cdot,\cdot))\rangle_{z,\sigma}\big\langle\partial_\zeta F_2g,\varrho_\delta((x,\xi)-\theta_{s,r}(\cdot,\cdot))\big\rangle_{y,\zeta}\,\dif W_r=0
\end{align*}
and similarly $J_3=0$.
$J_4$ can be rewritten as follows
\begin{align*}
J_4&=\stred\langle\!\langle\mathcal{I}_1(t),\mathcal{I}_2(t)\rangle\!\rangle\\
&=-\E\int_s^t\int g(z,\sigma)\cdot g(y,\zeta)\varrho_\delta((x,\xi)-\theta_{s,r}(z,\sigma))\varrho_\delta((x,\xi)-\theta_{s,r}(y\zeta))\dif\nu^1_{r,z}(\sigma)\dif\nu^2_{r,y}(\zeta)\dif z\dif y\dif r.
\end{align*}

Next, we apply the integration by parts formula for functions of bounded variation (see e.g. \cite[Chapter 0, Proposition 4.5]{revuz}) to $J_8$ and deduce
\begin{align*}
J_8&=\E\mu_1(\{s\})\mu_2(\{s\})+\E\int_{(s,t]}\mu_1([s,r])\,\dif\mu_2(r)+\E\int_{(s,t]}\mu_2([s,r))\,\dif \mu_1(r)\\
&=J_{80}+J_{81}+J_{82}.
\end{align*}
Since the kinetic measures $m_1$ and $m_2$ do not have atoms at $s$ a.s. due to the assumptions, it follows that $J_{80}=0.$ Regarding $J_{81}$, we make use of the formula for $\mu_1([s,r])$ given by \eqref{eq:stronger}. Namely
\begin{align*}
J_{81}&=\E\int_{(s,t]}\big\langle F_1^+(r),\varrho_\delta((x,\xi)-\theta_{s,r}(\cdot,\cdot))\big\rangle_{z,\sigma}\,\dif\mu_2(r)\\
&\quad-\E\big\langle F^-_{1}(s),\varrho_\delta((x,\xi)-\theta_s(\cdot,\cdot))\big\rangle_{z,\sigma}\mu_2([s,t])-\E\int_{(s,t]}\mathcal{I}_1(r)\,\dif\mu_2(r)=J_{811}-J_2-J_{5}.
\end{align*}
Going back to the product of \eqref{eq:stronger} and \eqref{eq:stronger1} we see that $J_{2}$ and $J_5$ cancel and for $J_{811}$ we write
\begin{align*}
J_{811}&=\E\int_{(s,t]}\int F_1^+(r)\varrho_\delta((x,\xi)-\theta_{s,r}(z,\sigma))\partial_\zeta \varrho_\delta((x,\xi)-\theta_{s,r}(y,\zeta))\dif m_2(r,y,\zeta)\dif z\dif \sigma\\
&\quad-\frac{1}{2}\E\int_{s}^t\int F_1(r)G^2(y,\zeta)\varrho_\delta((x,\xi)-\theta_{s,r}(z,\sigma))\partial_\zeta \varrho_\delta((x,\xi)-\theta_{s,r}(y,\zeta))\dif \nu^2_{r,y}(\zeta)\dif y\dif z\dif \sigma\dif r
\end{align*}

Let us now continue with $J_{82}$. We apply the formula for $\mu_2([s,r))$ which can be obtained from the formula for $F^-$ in Corollary \ref{cor:strongerversion} using Lemma \ref{lemma:neworigin}, cf. \eqref{eq:stronger1}. It yields
\begin{align*}
J_{82}&=\E\int_{(s,t]}\langle \overline{F}_2^-(r),\varrho_\delta((x,\xi)-\theta_{s,r}(\cdot,\cdot))\rangle_{y,\zeta}\,\dif\mu_1(r)\\
&\quad-\E\langle \overline{F}^-_{2}(s),\varrho_\delta((x,\xi)- (\cdot,\cdot))\rangle_{y,\zeta}\mu_1([s,t])-\E\int_{(s,t]}\mathcal{I}_2(r)\dif\mu_1(r)=J_{821}-J_6-J_7,
\end{align*}
where $J_{6}$ and $J_7$ cancel and for $J_{821}$ we have
\begin{align*}
J_{821}&=-\E\int_{(s,t]}\int \overline{F}_2^-(r)\varrho_\delta((x,\xi)-\theta_{s,r}(y,\zeta))\partial_\sigma \varrho_\delta((x,\xi)-\theta_{s,r}(z,\sigma))\dif m_1(r,z,\sigma)\dif y\dif \zeta\\
&\quad+\frac{1}{2}\E\int_{s}^t\int \overline F_2(r)G^2(z,\sigma) \varrho_\delta((x,\xi)-\theta_{s,r}(y,\zeta))\partial_\sigma\varrho_\delta((x,\xi)-\theta_{s,r}(z,\sigma))\dif \nu^1_{r,z}(\sigma)\dif z\dif y\dif \zeta\dif r.
\end{align*}

{\em Step 2:} Finally, we have all in hand to proceed with the proof.
Integrating \eqref{123} with respect to $x,\xi$ we obtain
\begin{equation*}
\begin{split}
\E\int&\, \kappa_R(x)F^+_1(t)\overline{F}^+_2(t)\varrho_\delta((x,\xi)-\theta_{s,t}(z,\sigma))\varrho_\delta((x,\xi)-\theta_{s,t}(y,\zeta))\dif x\dif \xi\dif y\dif\zeta\dif z\dif \sigma\\
&-\E \int \kappa_R(x)F^-_{1}(s)\overline{F}^-_{2}(s)\varrho_\delta((x,\xi)-(z,\sigma))\varrho_\delta((x,\xi)-(y,\zeta))\dif x\dif \xi\dif y\dif\zeta\dif z\dif \sigma=I_1+\cdots+I_5,
\end{split}
\end{equation*}
where
\begin{align*}
I_1&=\E\int_{(s,t]}\int \kappa_R(x)F_1^+(r)\varrho_\delta((x,\xi)-\theta_{s,r}(z,\sigma))\partial_\zeta\varrho_\delta((x,\xi)-\theta_{s,r}(y,\zeta))\dif m_2(r,y,\zeta),\\
I_2 &=-\E\int_{(s,t]}\int \kappa_R(x)\overline{F}^-_2(r) \varrho_\delta((x,\xi)-\theta_{s,r}(y,\zeta))\partial_\sigma\varrho_\delta((x,\xi)-\theta_{s,r}(z,\sigma))\dif m_1(r,z,\sigma)\\
I_3&=-\E\int_s^t\int\kappa_R(x) g(z,\sigma)\cdot g(y,\zeta)\varrho_\delta((x,\xi)-\theta_{s,r}(z,\sigma))\varrho_\delta((x,\xi)-\theta_{s,r}(y,\zeta))\dif\nu^1_{r,z}(\sigma)\dif\nu^2_{r,y}(\zeta),\\
I_4&=-\frac{1}{2}\E\int_{s}^t\int\kappa_R(x) F_1(r)G^2(y,\zeta)\varrho_\delta((x,\xi)-\theta_{s,r}(z,\sigma))\partial_\zeta \varrho_\delta((x,\xi)-\theta_{s,r}(y,\zeta))\dif \nu^2_{r,y}(\zeta),\\
I_5&=\frac{1}{2}\E\int_{s}^t\int \kappa_R(x)\overline F_2(r)G^2(z,\sigma) \varrho_\delta((x,\xi)-\theta_{s,r}(y,\zeta))\partial_\sigma\varrho_\delta((x,\xi)-\theta_{s,r}(z,\sigma))\dif \nu^1_{r,z}(\sigma).
\end{align*}
Above as well as in the sequel, when we integrate with respect to all variables $r,x,\xi,z,\sigma,y,\zeta$, we only specify the kinetic measures $m_1,\,m_2$ and the Young measures $\nu^1,\,\nu^2$, the Lebesgue measure being omitted.

Since the technique of estimation of $I_1$ and $I_2$ is similar, let us just focus on $I_1$. It holds that
\begin{align*}
I_1&=-\E\int_{(s,t]}\int\kappa_R(x)  \varrho_\delta((x,\xi)-\theta_{s,r}(z,\sigma))\varrho_\delta((x,\xi)-\theta_{s,r}(y,\zeta))\dd\nu^{1,+}_{r,z}(\sigma)\dif m_2(r,y,\zeta)\\
&\quad+\E\int_{(s,t]}\int \kappa_R(x)F_1^+ (r)  \partial_\sigma\varrho_\delta((x,\xi)-\theta_{s,r}(z,\sigma))\varrho_\delta((x,\xi)-\theta_{s,r}(y,\zeta))\dif m_2(r,y,\zeta)\\
&\quad+\E\int_{(s,t]}\int \kappa_R(x)F_1^+ (r) \varrho_\delta((x,\xi)-\theta_{s,r}(z,\sigma))\partial_\zeta\varrho_\delta((x,\xi)-\theta_{s,r}(y,\zeta))\dif m_2(r,y,\zeta)\\
&=I_{11}+I_{12}+I_{13},
\end{align*}
where $\nu^{1,+}$ is the Young measure corresponding to $F_1^+$ and the first equality holds true because the sum of the first two terms is equal to zero due to\footnote{By $\langle\cdot,\cdot\rangle_\sigma$ we denote the duality between the space of distributions on $\R_\sigma$ and $C^1_c(\R_\sigma)$.}
\begin{align}\label{ibp}
\begin{aligned}
\langle F_1^+(r,z,\cdot),\partial_\sigma\varrho_\delta((x,\xi)-\theta_{s,r}(z,\cdot))\rangle_\sigma&=-\langle\partial_\sigma F_1^+(r,z,\cdot),\varrho_\delta((x,\xi)-\theta_{s,r}(z,\cdot))\rangle_\sigma\\
&=\langle\nu^{1,+}_{r,z}(\cdot),\varrho_\delta((x,\xi)-\theta_{s,r}(z,\cdot))\rangle_\sigma, 
\end{aligned}
\end{align}
which holds true for a.e. $(\omega,z)$ and every $(r,x,\xi)$.
Consequently, $I_{11}\leq0$. We will show that $I_{12}+I_{13}$ is small if $t-s$ is small. Towards this end, we observe that
\begin{equation}\label{1}
\partial_\sigma\varrho_\delta((x,\xi)-\theta_{s,r}(z,\sigma))=-\nabla_{x,\xi}\varrho_\delta((x,\xi)-\theta_{s,r}(z,\sigma))\cdot\partial_\sigma\theta_{s,r}(z,\sigma)
\end{equation}
and
\begin{align}\label{2}
\begin{aligned}
&\int  \kappa_R(x)\nabla_{x,\xi}\varrho_\delta((x,\xi)-\theta_{s,r}(z,\sigma))\varrho_\delta((x,\xi)-\theta_{s,r}(y,\zeta))\dif x\dif\xi\\
&\quad+\int  \kappa_R(x)\varrho_\delta((x,\xi)-\theta_{s,r}(z,\sigma))\nabla_{x,\xi} \varrho_\delta((x,\xi)-\theta_{s,r}(y,\zeta))\dif x\dif\xi\\
&\qquad=-\int \nabla_{x,\xi} \kappa_R(x) \varrho_\delta((x,\xi)-\theta_{s,r}(z,\sigma)) \varrho_\delta((x,\xi)-\theta_{s,r}(y,\zeta))\dif x\dif\xi
\end{aligned}
\end{align}
it follows that (recall that $\theta^x$ was defined in \eqref{eq:fl2})
\begin{align}\label{eq:estim}
\begin{aligned}
I_{12}+I_{13}= \E\int_{(s,t]}\int& F_1^+ (r)\kappa_R(x)\big[\partial_\sigma\theta_{s,r}(z,\sigma)-\partial_\zeta\theta_{s,r}(y,\zeta)\big]\\
&\cdot\nabla_{x,\xi}\varrho_\delta((x,\xi)-\theta_{s,r}(z,\sigma))\varrho_\delta((x,\xi)-\theta_{s,r}(y,\zeta))\dif m_2(r,y,\zeta)\\
&\hspace{-1.5cm}+\E\int_{(s,t]}\int F_1^+(r)\nabla_{x} \kappa_R(x) \cdot\partial_\sigma\theta^x_{s,r}(z,\sigma)\\
&\times\varrho_\delta((x,\xi)-\theta_{s,r}(z,\sigma)) \varrho_\delta((x,\xi)-\theta_{s,r}(y,\zeta))\dif m_2(r,y,\zeta)\\
&\hspace{-1.9cm}=I_{121}+I_{131}
\end{aligned}
\end{align}
To proceed, we use the ideas from \cite{gess}. Namely, we observe that for every $r\in[s,t]$, $\partial_\xi\theta_{s,r}(\cdot)$ is Lipschitz continuous and its Lipschitz constant can be estimated by $C (t-s)^{1/p}$.
Indeed, according to Theorem \ref{thm:existence}
\begin{align*}
\sup_{x,\xi\in\mr^N\times\mr}|\totdif^2\theta_{s,r}(x,\xi)|=\sup_{x,\xi\in\mr^N\times\mr}|\totdif^2\theta_{s,r}(x,\xi)-\totdif^2 \theta_{s,s}(x,\xi)|\leq C (r-s)^{1/p}\leq C (t-s)^{1/p}.
\end{align*}
Besides, we only have to consider the case of
$$|(x,\xi)-\theta_{s,r}(z,\sigma)|<\delta,\qquad |(x,\xi)-\theta_{s,r}(y,\zeta)|<\delta$$
hence
$$|\theta_{s,r}(z,\sigma)-\theta_{s,r}(y,\zeta)|<2\delta$$
and consequently
\begin{align}\label{eq:4}
|(z,\sigma)-(y,\zeta)|&=\big|\pi_{s,r}\big(\theta_{s,r}(z,\sigma)\big)-\pi_{s,r}\big(\theta_{s,r}(y,\zeta)\big)\big|\leq C|\theta_{s,r}(z,\sigma)-\theta_{s,r}(y,\zeta)|<C\delta
\end{align}
which implies
\begin{equation}\label{eq:estim1}
\big|\partial_\sigma\theta_{s,r}(z,\sigma)-\partial_\zeta\theta_{s,r}(y,\zeta)\big|\leq C (t-s)^{1/p}\delta.
\end{equation}
Let us now consider the remaining terms in the integral with respect to $(x,\xi,z,\sigma)$ and use the fact that $\theta_{s,r}$ is volume preserving and $\delta|\nabla_{x,\xi}\varrho_\delta|(\cdot)\leq C\varrho_{2\delta}(\cdot)$. It yields
\begin{align}\label{eq:estim2}
\begin{aligned}
\int & \kappa_R(x)\big|\nabla_{x,\xi}\varrho_\delta((x,\xi)-\theta_{s,r}(z,\sigma))\big|\varrho_\delta((x,\xi)-\theta_{s,r}(y,\zeta))\dif x\dif\xi\dif z\dif\sigma\\
&\leq\int   \big|\nabla_{x,\xi}\varrho_\delta((x,\xi)-(z,\sigma))\big|\varrho_\delta((x,\xi)-\theta_{s,r}(y,\zeta))\dif x\dif\xi\dif z\dif\sigma\\
&\leq\frac{C}{\delta}\int \varrho_{2\delta}((x,\xi)-(z,\sigma))\varrho_\delta((x,\xi)-\theta_{s,r}(y,\zeta))\dif x\dif\xi\dif z\dif\sigma=\frac{C}{\delta}.
\end{aligned}
\end{align}
Plugging \eqref{eq:estim1} and \eqref{eq:estim2} into $I_{121}$ we conclude that
\begin{align*}
I_{121}\leq C (t-s)^{1/p}\E\int_{(s,t]}\int \dif m_2(r,y,\zeta).
\end{align*}
Since due to Theorem \ref{thm:existence}
\begin{equation}\label{e3}
\sup_{(x,\xi)\in\mr^N\times\mr}|\partial_\sigma\theta_{s,r}^x(x,\xi)|=\sup_{(x,\xi)\in\mr^N\times\mr}|\partial_\sigma\theta_{s,r}^x(x,\xi)-\partial_\sigma\theta_{s,s}^x(x,\xi)|\leq C(t-s)^{1/p},
\end{equation}
we use the fact that $|\nabla_x\kappa_R(x)|\leq R^{-1}$ and that $\theta_{s,r}$ is volume preserving to deduce
\begin{align*}
I_{131}&\leq CR^{-1}(t-s)^{1/p}\E\int_{(s,t]}\int\varrho_{\delta}((x,\xi)-\theta_{s,r}(z,\sigma))\varrho_{\delta}((x,\xi)-\theta_{s,r}(y,\zeta))\dif m_2(r,y,\zeta)\\
&\leq CR^{-1}(t-s)^{1/p}\E\int_{(s,t]}\int\dif m_2(r,y,\zeta)\leq C(t-s)^{1/p}\E\int_{(s,t]}\int\dif m_2(r,y,\zeta).
\end{align*}

Let us proceed with $I_4$ and $I_5$. Using the ideas from \eqref{1} and \eqref{2} we conclude
\begin{align*}
I_4&=\frac{1}{2}\E\int_s^t\int F_1(r)G^2(y,\zeta)\kappa_R(x)\big[\partial_\zeta\theta_{s,r}(y,\zeta)-\partial_\sigma\theta_{s,r}(z,\sigma)\big]\\
&\hspace{2cm}\cdot\nabla_{x,\xi}\varrho_\delta((x,\xi)-\theta_{s,r}(y,\zeta))\varrho_\delta((x,\xi)-\theta_{s,r}(z,\sigma))\dif\nu^2_{r,y}(\zeta)\\
&\quad-\frac{1}{2}\E\int_s^t\int F_1(r)G^2(y,\zeta)\nabla_x\kappa_R(x)\cdot\partial_\sigma\theta_{s,r}(z,\sigma)\\
&\hspace{2cm}\times\varrho_\delta((x,\xi)-\theta_{s,r}(z,\sigma))\varrho_\delta((x,\xi)-\theta_{s,r}(y,\zeta))\dif\nu^2_{r,y}(\zeta)\\
&\quad+\frac{1}{2}\E\int_s^t\int G^2(y,\zeta)\kappa_R(x)\varrho_\delta((x,\xi)-\theta_{s,r}(z,\sigma))\varrho_\delta((x,\xi)-\theta_{s,r}(y,\zeta))\dif\nu^1_{r,z}(\sigma)\dif\nu^2_{r,y}(\zeta)\\
&=I_{41}+I_{42}+I_{43}
\end{align*}
and similarly
\begin{align*}
I_5&=-\frac{1}{2}\E\int_s^t\int \overline F_2(r)G^2(z,\sigma)\kappa_R(x)\big[\partial_\sigma\theta_{s,r}(z,\sigma)-\partial_\zeta\theta_{s,r}(y,\zeta)\big]\\
&\hspace{2cm}\cdot\nabla_{x,\xi}\varrho_\delta((x,\xi)-\theta_{s,r}(z,\sigma))\varrho_\delta((x,\xi)-\theta_{s,r}(y,\zeta))\dif\nu^1_{r,z}(\sigma)\\
&\quad+\frac{1}{2}\E\int_s^t\int \overline F_2(r)G^2(z,\sigma)\nabla_x\kappa_R(x)\cdot\partial_\zeta\theta_{s,r}(y,\zeta)\\
&\hspace{2cm}\times\varrho_\delta((x,\xi)-\theta_{s,r}(z,\sigma))\varrho_\delta((x,\xi)-\theta_{s,r}(y,\zeta))\dif\nu^1_{r,z}(\sigma)\\
&\quad+\frac{1}{2}\E\int_s^t\int G^2(z,\sigma)\kappa_R(x)\varrho_\delta((x,\xi)-\theta_{s,r}(z,\sigma))\varrho_\delta((x,\xi)-\theta_{s,r}(y,\zeta))\dif\nu^1_{r,z}(\sigma)\dif\nu^2_{r,y}(\zeta)\\
&=I_{51}+I_{52}+I_{53}.
\end{align*}
Using a similar approach as for $I_{121}$ together with the fact that $G^2(y,\zeta)\leq C|\zeta|^2$ which follows from the assumptions on $g$, we deduce that
\begin{align*}
I_{41}+I_{51}&\leq C(t-s)^{1/p}\bigg(\stred\int_s^t\int G^2(y,\zeta)\dif\nu^2_{r,y}(\zeta)\dif y\dif r+\stred\int_s^t\int G^2(z,\sigma)\dif\nu_{r,z}^1(\sigma)\dif z\dif r\bigg)\\
&\leq C(t-s)^{1/p}\bigg(\stred\int_s^t\int |\zeta|^2\dif\nu^2_{r,y}(\zeta)\dif y\dif r+\stred\int_s^t\int |\sigma|^2\dif\nu_{r,z}^1(\sigma)\dif z\dif r\bigg)
\end{align*}
and similarly to $I_{131}$ we get
\begin{align*}
I_{42}+I_{52}&\leq  CR^{-1}(t-s)^{1/p}\E\int_{s}^t\int|\zeta|^2\dif \nu^2_{r,y}(\zeta)\dif y\dif r+ CR^{-1}(t-s)^{1/p}\E\int_{s}^t\int|\sigma|^2\dif \nu^1_{r,z}(\sigma)\dif z\dif r\\
&\leq  C(t-s)^{1/p}\E\int_{s}^t\int|\zeta|^2\dif \nu^2_{r,y}(\zeta)\dif y\dif r+ C(t-s)^{1/p}\E\int_{s}^t\int|\sigma|^2\dif \nu^1_{r,z}(\sigma)\dif z\dif r.
\end{align*}
Besides,
\begin{align*}
I_3&+I_{43}+I_{53}\\
&=\frac{1}{2}\E\int_s^t\int \kappa_R(x)\big|g(z,\sigma)-g(y,\zeta)\big|^2\varrho_\delta((x,\xi)-\theta_{s,r}(z,\sigma))\varrho_\delta((x,\xi)-\theta_{s,r}(y,\zeta))\dif\nu^1_{r,z}(\sigma)\dif\nu^2_{r,y}(\zeta)\\
&\leq C\delta^2\E\int_s^t\int_{Q_R} \varrho_\delta((x,\xi)-\theta_{s,r}(z,\sigma))\varrho_\delta((x,\xi)-\theta_{s,r}(y,\zeta))\dif\nu^1_{r,z}(\sigma)\dif\nu^2_{r,y}(\zeta)
\end{align*}
where $Q_R=B_{2R,x}\times\mr_\xi\times B_{2R+CT^{1/p},z}\times\mr_\sigma\times B_{2R+CT^{1/p},y}\times\mr_\zeta$ and we used \eqref{eq:4} and the Lipschitz continuity of $g$. Next, we estimate using \eqref{ibp}, the fact that $\delta|\nabla_{x,\xi}\varrho_\delta|(\cdot)\leq C\varrho_{2\delta}(\cdot)$, \eqref{e3} and the fact that $\theta$ is volume preserving, as follows
\begin{align*}
&\int_{B_{2R+CT^{1/p},z}\times\R_\sigma}\varrho_\delta((x,\xi)-\theta_{s,r}(z,\sigma))\dd\nu^1_{r,z}(\sigma)\dd z\leq \int_{\mr^N\times\R}F^{-}_1(r,z,\sigma)\partial_\sigma\varrho_\delta((x,\xi)-\theta_{s,r}(z,\sigma))\dd z\dd\sigma\\
&\qquad=-\int_{\mr^N\times\R}F^{-}_1(r,z,\sigma)\nabla_{x,\xi}\varrho_\delta((x,\xi)-\theta_{s,r}(z,\sigma))\partial_\sigma\theta_{s,r}(z,\sigma)\dd z\dd\sigma\\
&\qquad \leq C\delta^{-1}(t-s)^{1/p}\int_{\R^N\times\R}\varrho_{2\delta}((x,\xi)-\theta_{s,r}(z,\sigma))\dd z\dd\sigma=C\delta^{-1}(t-s)^{1/p}
\end{align*}
which holds true for a.e. $(\omega,r)$ and every $(x,\xi)$.
Hence
\begin{align*}
I_3+I_{43}+I_{53}&\leq C\delta(t-s)^{1/p}\E\int_s^t\int_{B_{2R+CT^{1/p},y}\times\mr_\zeta}\int_{B_{2R,x}\times\mr_\xi}\varrho_\delta((x,\xi)-\theta_{s,r}(y,\zeta))\dd x\dd\xi\dd\nu^2_{r,y}(\zeta)\dd y\\
&\leq C\delta(t-s)^{1/p}\E\int_s^t\int_{B_{2R+CT^{1/p},y}\times\mr_\zeta}\dd\nu^2_{r,y}(\zeta)\dd y\leq CR\delta(t-s)^{1+1/p}
\end{align*}
which completes the proof of \eqref{eq:doubling}.
\end{proof}
\end{prop}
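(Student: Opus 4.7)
\textbf{Proof proposal for Proposition \ref{prop:doubling}.}
The overall plan is to apply the stronger formulation from Corollary \ref{cor:strongerversion}, combined with the restart property from Lemma \ref{lemma:neworigin}, to both $F_1^+$ and $\overline{F}_2^+ = 1 - F_2^+$, using the mollifier $\varrho_\delta((x,\xi) - \theta_{s,t}(\cdot,\cdot))$ as a test function. This yields two identities of the schematic form
\begin{equation*}
\langle F_1^+(t), \varrho_\delta(\cdot - \theta_{s,t}) \rangle_{z,\sigma} = \langle F_1^-(s), \varrho_\delta(\cdot) \rangle_{z,\sigma} + \mathcal{I}_1(t) + \mu_1([s,t]),
\end{equation*}
and analogously for $\overline{F}_2^+$, where $\mathcal{I}_i$ are Itô martingales coming from the Brownian forcing and $\mu_i$ are signed Radon measures in time encoding the kinetic measure and the parabolic defect. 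I would then multiply the two identities and take expectations over $\omega$ and integrate over $(x,\xi)$ weighted by $\kappa_R$.

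The second stage is a bookkeeping exercise: the product produces nine terms, four of which I expect to vanish or cancel. The pure martingale-times-deterministic terms $\E[\mathcal{I}_i \langle F^-(s), \varrho_\delta \rangle]$ vanish by the martingale property; the atom at $s$ vanishes by the hypothesis that neither $m_1$ nor $m_2$ charges $\{s\}$. For the mixed martingale-times-measure terms, I apply the integration by parts formula for continuous martingales and càdlàg BV functions, which produces stochastic integrals $\int_s^t \mu_j([s,r])\, \dif\mathcal{I}_i(r)$ (again a martingale with zero expectation) together with $\int_{(s,t]} \mathcal{I}_i(r)\, \dif\mu_j(r)$. The latter terms can then be recombined with the measure-times-measure integration by parts formula: substituting back the formula for $\mu_j$ in terms of $F^+$ generates a cancelling pair and leaves behind only genuine $m$-integrals and $\nu$-integrals. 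Finally, the quadratic covariation $\E\langle\!\langle\mathcal{I}_1,\mathcal{I}_2\rangle\!\rangle_t$ combines with the two parabolic defect terms to form $\tfrac{1}{2}|g(z,\sigma)-g(y,\zeta)|^2 \varrho_\delta \varrho_\delta$, which will be controlled by the Lipschitz estimate on $g$.

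The third stage is the quantitative control. The key observations are: (i) inside the support of $\varrho_\delta(\cdot - \theta_{s,r}(z,\sigma))\varrho_\delta(\cdot - \theta_{s,r}(y,\zeta))$, one has $|(z,\sigma)-(y,\zeta)| \lesssim \delta$ since $\pi_{s,r}$ is Lipschitz uniformly in $r$; (ii) by Theorem \ref{thm:existence}, $\partial_\sigma \theta_{s,r}(\cdot)$ is Lipschitz with seminorm $\lesssim (t-s)^{1/p}$ and $|\partial_\sigma \theta^x_{s,r}| \lesssim (t-s)^{1/p}$; (iii) the identity $\partial_\sigma \varrho_\delta(\cdot - \theta_{s,r}) = -\nabla_{x,\xi}\varrho_\delta(\cdot - \theta_{s,r})\cdot \partial_\sigma\theta_{s,r}$ together with symmetrisation between the $(z,\sigma)$- and $(y,\zeta)$- derivatives produces the factor $\partial_\sigma\theta_{s,r}(z,\sigma) - \partial_\zeta\theta_{s,r}(y,\zeta)$, which is $O((t-s)^{1/p}\delta)$; (iv) the volume-preserving character of $\theta$ and the bound $\delta|\nabla_{x,\xi}\varrho_\delta| \leq C \varrho_{2\delta}$ convert the $\delta^{-1}$ cost of differentiation into an $O(1)$ factor; (v) the $\kappa_R$ derivative contributes an $R^{-1}$ that is absorbed by choosing constants, while the $g$-Lipschitz bound $|g(z,\sigma) - g(y,\zeta)|^2 \lesssim \delta^2$ on the support, combined with a further integration by parts of one of the $\varrho_\delta$'s in $\sigma$ to convert $\dif \nu^1$ into $\partial_\sigma \varrho_\delta$, yields the missing $\delta^{-1}(t-s)^{1/p}$ and produces the clean bound $R\delta(t-s)^{1+1/p}$.

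The main obstacle I anticipate is the careful bookkeeping of the nine terms from the Itô product, particularly the correct application of the integration by parts formula between a continuous martingale and a càdlàg BV function at the point $s$ (where the BV function has potential atoms, but these are excluded by the hypothesis on $m_1, m_2$), and identifying precisely which two pairs cancel after substituting back the evolution equations into $J_{81}$ and $J_{82}$. Once those cancellations are verified, the quantitative estimates are systematic consequences of the rough-path regularity of the flow $\theta$ established in Theorem \ref{thm:existence} and the standard mollifier estimates.
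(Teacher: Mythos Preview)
Your proposal is correct and follows essentially the same route as the paper's proof: apply Corollary~\ref{cor:strongerversion} and Lemma~\ref{lemma:neworigin} with the mollifier as test function, multiply the two resulting identities, decompose into martingale and BV pieces, use the It\^o and BV integration-by-parts formulas to expose the cancellations (the paper labels the eight cross terms $J_1,\dots,J_8$, substitutes back into $J_{81},J_{82}$ to cancel $J_2,J_5,J_6,J_7$, and kills $J_{80}$ via the no-atom hypothesis), and then performs the quantitative estimates via the chain rule identity for $\partial_\sigma\varrho_\delta$, the symmetrisation against $\partial_\zeta\varrho_\delta$, and the flow regularity from Theorem~\ref{thm:existence}. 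Your key observations (i)--(v) match the paper's estimates line for line, including the final handling of $I_3+I_{43}+I_{53}$ via the Lipschitz bound on $g$ and the conversion of one $\dif\nu^1$ back into $\partial_\sigma\varrho_\delta$ to extract the extra $(t-s)^{1/p}\delta^{-1}$.
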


Finally, we have all in hand to prove the Reduction Theorem and the $L^1$-contraction property.

\begin{thm}[Reduction Theorem]\label{thm:reduction}
Let $u_0\in L^1\cap L^2(\Omega\times\R^N)$ and let $F$ be a generalized kinetic solution to \eqref{eq} with initial datum $F_0=\ind_{u_0>\xi}$. Then it is a kinetic solution to \eqref{eq}, that is, there exists $$u\in  L^2(\Omega,L^2(0,T;L^2(\mr^N)))$$
satisfying \eqref{fd} such that
$F(t,x,\xi)=\ind_{u(t,x)>\xi}$ a.e.

\begin{proof}
Let $t\in[0,T)$, $h>0$ and $\{0=t_0<t_1<\cdots<t_n=t\}$ be a partition of $[0,t]$ such that $|t_i-t_{i+1}|\le h$, $i=0,\dots,n-1$. According to Corollary \ref{cor:strongerversion}, we can assume (without loss of generality) that the kinetic measure $m$ has a.s. no atom at $t_i$, $i=1,\dots,n-1,$ that is, we only consider points from the dense set $\mathcal{A}\subset[0,T]$ which was constructed in Proposition \ref{limits}. Besides, due to Lemma \ref{lem:equil}, $m$ has no atom at $0$ a.s. Furthermore, if $\nu^+$ denotes the Young measure corresponding to $F^+$ regarded as a kinetic function on $\Omega\times[0,T]\times\R^N$, then it follows from \eqref{integrov} that
$$\E\int_0^T\int_{\R^N}\int_\R|\xi|^2\dd\nu^+_{t,x}(\xi)\dd x\dd t<\infty.$$ 
Consequently we may assume (without loss of generality) that for all $i\in\{1,\dots,n-1\}$
\begin{equation}\label{ref}
\E\int_{\R^N}\int_\R|\xi|^2\dd\nu^+_{t_i,x}(\xi)\dd x<\infty
\end{equation}
and since $u_0\in L^2(\Omega\times\R^N)$ and $\nu_{t_0,x}^+=\delta_{u_0(x)}$ due to Lemma \ref{lem:equil}, the same holds true also for $i=0$.

Application of Proposition \ref{prop:doubling} yields
\begin{align}\label{eqq:123}
\begin{aligned}
& \E\int\,\kappa_R(x)\langle F^+(t_i),\varrho_\delta((x,\xi)-\theta_{t_{i-1},t_i}(\cdot,\cdot))\rangle_{z,\sigma}\langle \overline{F}^+(t_{i}),\varrho_\delta((x,\xi)-\theta_{t_{i-1},t_i}(\cdot,\cdot))\rangle_{y,\zeta}\dif x\dif\xi\\
&\quad-\E\int\kappa_R(x)\langle F^+(t_{i-1}),\varrho_\delta((x,\xi)-\theta_{t_{i-1},t_{i-1}}(\cdot,\cdot))\rangle_{z,\sigma}\langle \overline{F}^+(t_{i-1}),\varrho_\delta((x,\xi)-\theta_{t_{i-1},t_{i-1}}(\cdot,\cdot))\rangle_{y,\zeta}\dif x\dif\xi\\
&\leq Ch^{1/p}\stred\int_{(t_{i-1},t_i]}\int\dif m(r,y,\zeta)+Ch^{1/p}\stred\int_{t_{i-1}}^{t_i}\int |\zeta|^2\dif\nu_{r,y}(\zeta)\dif y\dif r+CR\delta h^{1+1/p}
\end{aligned}
\end{align}
Now, we send $\delta\rightarrow0$ first and then $R\to\infty$.


Regarding the first term on the left hand side of \eqref{eqq:123}, since the flow is volume preserving we have
\begin{align*}
\langle F^+(t_i),\varrho_\delta((x,\xi)-\theta_{t_{i-1},t_i}(\cdot,\cdot))\rangle_{z,\sigma}=\langle F^+(t_i,\pi_{t_{i-1},t_i}(\cdot,\cdot)),\varrho_\delta((x,\xi)-(\cdot,\cdot))\rangle_{z,\sigma}
\end{align*}
which converges to $F^+(t_i,\pi_{t_{i-1},t_i}(x,\xi))$ for a.e. $(x,\xi)$ as $\delta\to0$ and similarly for the term including $\overline F^+(t_i)$.
Thus, if $i=n$ we apply Fatou's Lemma to estimate the $\liminf_{\delta\to 0}$ of the first term on the left hand side from below by
\begin{align*}
& \E\int\,\kappa_R(x)F^+(t_n,\pi_{t_{n-1},t_n}(x,\xi))\overline F^+(t_n,\pi_{t_{n-1},t_n}(x,\xi))\dif x\dif\xi\\
&\quad=\E\int\,\kappa_R(\theta^x_{t_{n-1},t_n}(x,\xi))F^+(t_n,x,\xi)\overline F^+(t_n,x,\xi)\dif x\dif\xi.
\end{align*}
If $i\in\{0,\dots,n-1\}$ we need to pass to the limit. To this end, we note that according to \eqref{ref} and Remark \ref{chi} the functions $\chi_{F^+(t_i)}$ are integrable in $\omega,\xi$ and locally integrable in $x$. Therefore, we estimate
\begin{align*}
&\langle F^+(t_i),\varrho_\delta((x,\xi)-\theta_{t_{i-1},t_i}(\cdot,\cdot))\rangle_{z,\sigma}\langle \overline{F}^+(t_{i}),\varrho_\delta((x,\xi)-\theta_{t_{i-1},t_i}(\cdot,\cdot))\rangle_{y,\zeta}\\
&\quad\quad\quad\leq\begin{cases}
		\langle F^+(t_i),\varrho_\delta((x,\xi)-\theta_{t_{i-1},t_i}(\cdot,\cdot))\rangle_{z,\sigma},&\text{ if }\xi\geq 0,\\
		\langle \overline{F}^+(t_{i}),\varrho_\delta((x,\xi)-\theta_{t_{i-1},t_i}(\cdot,\cdot))\rangle_{y,\zeta},&\text{ if }\xi<0,
		\end{cases}
\end{align*}
and observe that the left hand side converges a.e. in $(\omega,x,\xi)$. Indeed, both functions $F^+(t_i)$ and $\overline{F}^+(t_i)$ are locally integrable in $(\omega,x,\xi)$ so their mollifications converge a.e. Next, we apply Remark \ref{chi} to deduce that $F^+(t_i)$ is integrable on $\Omega\times B_{2R+CT^{1/p},x}\times[0,\infty)$ whereas $\overline F^+(t_i)$ is integrable on $\Omega\times B_{2R+CT^{1/p},x}\times(-\infty,0).$ As a consequence,
\begin{align*}
\langle F^+(t_i),\varrho_\delta((x,\xi)-\theta_{t_{i-1},t_i}(\cdot,\cdot))\rangle_{z,\sigma}\longrightarrow F^+(t_i,\pi_{t_{i-1},t_i}(x,\xi))\quad\text{in}\quad L^1(\Omega\times B_{2R}\times [0,\infty))
\end{align*}
and similarly
\begin{align*}
\langle \overline F^+(t_i),\varrho_\delta((x,\xi)-\theta_{t_{i-1},t_i}(\cdot,\cdot))\rangle_{y,\zeta}\longrightarrow\overline F^+(t_i,\pi_{t_{i-1},t_i}(x,\xi))\quad\text{in}\quad L^1(\Omega\times B_{2R}\times (-\infty,0))
\end{align*}
and for subsequences we obtain convergence a.e. in $(\omega,x,\xi)$.
Therefore, we may apply the generalization of the Vitali convergence theorem \cite[Corollaire 4.14]{kavian} to deduce that
\begin{align*}
&\langle F^+(t_i),\varrho_\delta((x,\xi)-\theta_{t_{i-1},t_i}(\cdot,\cdot))\rangle_{z,\sigma}\langle \overline{F}^+(t_{i}),\varrho_\delta((x,\xi)-\theta_{t_{i-1},t_i}(\cdot,\cdot))\rangle_{y,\zeta}\\
&\qquad\qquad\longrightarrow F^+(t_i,\pi_{t_{i-1},t_i}(x,\xi))\overline F^+(t_i,\pi_{t_{i-1},t_i}(x,\xi))\quad\text{in}\quad L^1(\Omega\times B_{2R}\times \R)
\end{align*}
and accordingly to pass to the limit as $\delta\to 0$ in the first and second term on the left hand side of \eqref{123} to obtain terms of the form
\begin{align*}
& \E\int\,\kappa_R(x)F^+(t_i,\pi_{t_{i-1},t_i}(x,\xi))\overline F^+(t_i,\pi_{t_{i-1},t_i}(x,\xi))\dif x\dif\xi\\
&\quad=\E\int\,\kappa_R(\theta^x_{t_{i-1},t_i}(x,\xi))F^+(t_i,x,\xi)\overline F^+(t_i,x,\xi)\dif x\dif\xi.
\end{align*}

In order to pass to the limit as $R\to\infty$ we intend to apply the dominated convergence theorem. To this end, it is necessary to justify that for all $i=0,\dots,n,$
$$F^+(t_i)\overline F^+(t_i)\in L^1(\Omega\times\mr^N\times\R),$$
which can be proved inductively: if $i=0$ then $F^+(0)\overline{F}^+(0)=\ind_{u_0>\xi}(1-\ind_{u_0>\xi})=0$ hence the claim holds true. If it is true for some $i-1$ where $i\in\{1,\dots,n\}$ then the Fatou lemma together with the discussion above leads to
\begin{align*}
\E\int F^+(t_i,x,\xi)\overline{F}^+(t_i,x,\xi) \dif x\dif\xi&\leq \lim_{R\to\infty}\E\int \kappa_R (\theta^x_{t_{i-1},t_i}(x,\xi))F^+(t_{i-1},x,\xi)\overline{F}^+(t_{i-1},x,\xi) \dif x\dif\xi\\
&\quad+ Ch^{1/p}\stred\int_{(t_{i-1},t_i]}\int\dif m(r,y,\zeta)+Ch^{1/p}\stred\int_{t_{i-1}}^{t_i}\int |\zeta|^2\dif\nu_{r,y}(\zeta)\dif y\dif r\\
&=\E\int F^+(t_{i-1},x,\xi)\overline{F}^+(t_{i-1},x,\xi) \dif x\dif\xi\\
&\quad+ Ch^{1/p}\stred\int_{(t_{i-1},t_i]}\int\dif m(r,y,\zeta)+Ch^{1/p}\stred\int_{t_{i-1}}^{t_i}\int |\zeta|^2\dif\nu_{r,y}(\zeta)\dif y\dif r.
\end{align*}
Thus Definition \ref{def:kinmeasure}(ii) and \eqref{integrov} give the claim for $i$.
\color{black}

Altogether, sending $\delta\to0$ and $R\to\infty$ gives for all $i\in\{1,\dots,n\}$
\begin{align*}
&\E\int F^+(t_i,x,\xi)\overline{F}^+(t_i,x,\xi) \dif x\dif\xi-\E\int F^+(t_{i-1},x,\xi)\overline{F}^+(t_{i-1},x,\xi) \dif x\dif\xi\\
&\quad\leq Ch^{1/p}\stred\int_{(t_{i-1},t_i]}\int\dif m(r,y,\zeta)+Ch^{1/p}\stred\int_{t_{i-1}}^{t_i}\int |\zeta|^2\dif\nu_{r,y}(\zeta)\dif y\dif r.
\end{align*}
Consequently,
\begin{align*}
&\E\int F^+(t,x,\xi)\overline{F}^+(t,x,\xi) \dif x\dif\xi-\E\int F^+_0(x,\xi)\overline{F}^+_0(x,\xi) \dif x\dif\xi\\
&=\sum_{i=1}^n\E\int F^+(t_i,x,\xi)\overline{F}^+(t_i,x,\xi) \dif x\dif\xi-\E\int F^+(t_{i-1},x,\xi)\overline{F}^+(t_{i-1},x,\xi) \dif x\dif\xi\\
&\leq Ch^{1/p}\stred\int_{(0,t]}\int\dif m(r,y,\zeta)+Ch^{1/p}\stred\int_{0}^{t}\int |\zeta|^2\dif\nu_{r,y}(\zeta)\dif y\dif r
\end{align*}
and sending $h\rightarrow0$ yields (using Definition \ref{def:kinmeasure}(ii) and \eqref{integrov})
\begin{align*}
\E\int& F^+(t,x,\xi)\overline{F}^+(t,x,\xi)\, \dif x\,\dif\xi\leq \E\int F_{0}(x,\xi)\overline{F}_{0}(x,\xi)\,\dif x\,\dif \xi=\E\int \ind_{u_0(x)>\xi}(1-\ind_{u_0(x)>\xi})\,\dif x\,\dif\xi=0.
\end{align*}
Hence $F^+(t)(1-F^+(t))=0$ for a.e. $(\omega,x,\xi)$. Now, the fact that $F^+(t)$ is a kinetic function for all $t\in[0,T)$ gives the
conclusion: indeed, by Fubini Theorem, for any $t \in [0, T )$, there is a set $E_t$ of
full measure in $\Omega\times\R^N$ such that, for all $(\omega,x)\in E_t$, $F^+(t,x,\xi)\in\{0,1\} $ for
a.e. $\xi\in\R$. Recall that $-\partial_\xi F^+(\omega,t,x, \cdot)$ is a probability measure on $\R$ hence,
necessarily, there exists $u:\Omega\times [0,T)\times\mr^N\rightarrow\mr$ measurable such that $F^+(\omega,t,x,\xi)=\ind_{u(\omega,t,x)>\xi}$ for a.e. $(\omega,x,\xi)$ and all $t\in[0,T)$. Moreover, according to \eqref{integrov}, it holds
$$\E\esssup_{0\leq t\leq T}\int_{\mr^N}|u(t,x)|\,\dif x=\E\esssup_{0\leq t\leq T}\int_{\mr^N}\int_\mr|\xi|\,\dif\nu^+_{t,x}(\xi)\,\dif x\leq C$$
and
$$\E\int_0^T\int_{\mr^N}|u(t,x)|^2\,\dif x\dif t=\E\int_0^T\int_{\mr^N}\int_\mr|\xi|^2\,\dif\nu^+_{t,x}(\xi)\,\dif x\dif t\leq C$$
hence $u$ is a kinetic solution.
%
%
%
%
\end{proof}
\end{thm}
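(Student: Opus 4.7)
The strategy is to apply the doubling estimate of Proposition \ref{prop:doubling} with $F_1=F_2=F$ (and $m_1=m_2=m$). Since $F_0$ is at equilibrium, $F_0\overline F_0 = \ind_{u_0>\xi}(1-\ind_{u_0>\xi})=0$, so the initial term vanishes. My aim is to telescope over a fine partition and send the various parameters to zero to deduce $F^+(1-F^+)=0$ a.e., and then recover $u$ from the fact that $-\partial_\xi F^+$ is a probability measure.

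\textbf{Setting up the telescoping.} Fix $t\in[0,T)$ and choose a partition $0=t_0<t_1<\cdots<t_n=t$ with mesh $\le h$, with $t_1,\dots,t_{n-1}\in\mathcal A$ (the dense set from Proposition \ref{limits}) so that $m$ has no atoms at these points a.s.; by Lemma \ref{lem:equil}, $m$ also has no atom at $0$. Applying Proposition \ref{prop:doubling} on each $[t_{i-1},t_i]$ with $F_1=F_2=F^{\pm}$ as prescribed yields, for each $i$,
\begin{align*}
&\E\!\int\!\kappa_R(x)\langle F^+(t_i),\varrho_\delta((x,\xi)-\theta_{t_{i-1},t_i}(\cdot))\rangle\langle\overline F^+(t_i),\varrho_\delta((x,\xi)-\theta_{t_{i-1},t_i}(\cdot))\rangle\dif x\dif\xi\\
&\quad-\E\!\int\!\kappa_R(x)\langle F^+(t_{i-1}),\varrho_\delta((x,\xi)-\cdot)\rangle\langle\overline F^+(t_{i-1}),\varrho_\delta((x,\xi)-\cdot)\rangle\dif x\dif\xi\\
&\le Ch^{1/p}\Bigl(\E m((t_{i-1},t_i]\times\R^N\times\R)+\E\!\int_{t_{i-1}}^{t_i}\!\!\int|\zeta|^2\dif\nu_{r,y}\dif y\dif r\Bigr)+CR\delta h^{1+1/p}.
\end{align*}

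\textbf{Passing $\delta\to0$ and $R\to\infty$.} This is the delicate step because $F^+,\overline F^+\in[0,1]$ are not integrable themselves. The trick is Remark \ref{chi}: on $\Omega\times B_{2R+CT^{1/p}}$, the function $F^+(t_i,x,\xi)$ is integrable for $\xi\ge0$ and $\overline F^+(t_i,x,\xi)=1-F^+(t_i,x,\xi)$ is integrable for $\xi<0$ (using \eqref{ref} to ensure the second moment of $\nu^+$ is finite at each $t_i$). Splitting the product according to $\mathrm{sgn}(\xi)$ and bounding the larger factor by $1$, one then uses a.e. convergence of the mollifications together with a Vitali-type argument to obtain, as $\delta\to0$,
\[
\langle F^+(t_i),\varrho_\delta((x,\xi)-\theta_{t_{i-1},t_i}(\cdot))\rangle\,\langle\overline F^+(t_i),\varrho_\delta((x,\xi)-\theta_{t_{i-1},t_i}(\cdot))\rangle\to F^+\overline F^+(t_i,\pi_{t_{i-1},t_i}(x,\xi))
\]
in $L^1(\Omega\times B_{2R}\times\R)$. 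A change of variables using that $\pi$ is volume preserving turns the weight $\kappa_R(x)$ into $\kappa_R(\theta^x_{t_{i-1},t_i}(x,\xi))$. To pass $R\to\infty$ by dominated convergence, I must know that $F^+(t_i)\overline F^+(t_i)\in L^1(\Omega\times\R^N\times\R)$; this is established inductively in $i$: the base case $i=0$ is immediate, and the inductive step follows from the same doubling estimate with Fatou on the left.

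\textbf{Conclusion.} Telescoping the resulting inequalities over $i=1,\dots,n$ gives
\[
\E\!\int F^+(t)\overline F^+(t)\dif x\dif\xi\le Ch^{1/p}\Bigl(\E m((0,t]\times\R^N\times\R)+\E\!\int_0^t\!\!\int|\zeta|^2\dif\nu_{r,y}\dif y\dif r\Bigr),
\]
since the $i=0$ boundary term vanishes. The right-hand side is finite by Definition \ref{def:kinmeasure}(ii) and \eqref{integrov}, so letting $h\to0$ yields $F^+(t)(1-F^+(t))=0$ a.e. in $(\omega,x,\xi)$. Thus $F^+(t,x,\xi)\in\{0,1\}$ for a.e. $(\omega,x,\xi)$ and every $t$ in a set of full measure in $[0,T)$. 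Since $-\partial_\xi F^+(\omega,t,x,\cdot)$ is a probability measure on $\R$, a $\{0,1\}$-valued kinetic function must be of the form $\ind_{u(\omega,t,x)>\xi}$ for some measurable $u$, and the bounds \eqref{fd} and $u\in L^2(\Omega;L^2(0,T;L^2(\R^N)))$ follow from \eqref{integrov} and Remark \ref{chi}.

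\textbf{Main obstacle.} The most subtle step is the joint $\delta\to0$, $R\to\infty$ limit in the bilinear product $F^+\overline F^+$ on the left-hand side, because neither $F^+$ nor $\overline F^+$ is integrable globally. Controlling this via the $\chi_{F^+}$ decomposition and an inductive integrability argument (rather than a brute-force weak compactness argument) is what makes the scheme go through.
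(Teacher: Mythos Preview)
Your proposal is correct and follows essentially the same route as the paper's proof: apply Proposition~\ref{prop:doubling} with $F_1=F_2=F$ on each subinterval of a fine partition whose nodes lie in $\mathcal A$, pass $\delta\to0$ via the $\chi_{F^+}$ splitting of Remark~\ref{chi} and a Vitali-type argument, pass $R\to\infty$ by dominated convergence with the inductive $L^1$-bound on $F^+\overline F^+$, telescope, and let $h\to0$. The only cosmetic difference is that the paper argues the conclusion $F^+(t)(1-F^+(t))=0$ for \emph{every} $t\in[0,T)$ (using Fatou at the terminal node $t_n=t$), whereas you state it for a.e.\ $t$; both suffice since $F^+=F$ a.e.
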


\begin{cor}[$L^1$-contraction property]\label{cor:contraction}
Let $u_1$ and $u_2$ be kinetic solutions to \eqref{eq} with initial data $u_{1,0}$ and $u_{2,0}$, respectively. Then there exist representatives $\tilde{u}_1$ and $\tilde{u}_2$ of the class of equivalence $u_1$ and $u_2$, respectively, such that for all $t\in[0,T)$
\begin{equation*}
\E\big\|\big(\tilde{u}_1(t)-\tilde{u}_2(t)\big)^+\big\|_{L^1_x}\leq \E\|(u_{1,0}-u_{2,0})^+\|_{L^1_x}.
\end{equation*}

\begin{proof}
First, we observe that the following identity holds true
$$(u_1-u_2)^+=\int_\mr\ind_{u_1>\xi}\overline{\ind_{u_2>\xi}}\,\dif \xi.$$
Let $\tilde{u}_1$ and $\tilde{u}_2$ denote the representatives constructed in Theorem \ref{thm:reduction}. Then proceeding similarly to Theorem \ref{thm:reduction}, we apply Proposition \ref{prop:doubling} and obtain
\begin{align*}
\E\big\|\big(\tilde u_1(t) -\tilde u_2(t)\big)^+\big\|_{L^1_x}&=\E\int F^+_1(t,x,\xi)\overline{F}^+_2(t,x,\xi)\,\dif \xi\dif x\\
&\leq\E\int \ind_{u_{1,0}(x)>\xi}\overline{\ind_{u_{2,0}(x)>\xi}}\,\dif \xi\dif x=\E\|(u_{1,0}-u_{2,0})^+\|_{L^1_x}
\end{align*}
which completes the proof.
\end{proof}
\end{cor}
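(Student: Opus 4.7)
The plan is to mimic the structure of the proof of Theorem \ref{thm:reduction}, but now applied to two kinetic solutions instead of one, with the zero initial defect $F_0\overline{F}_0=0$ replaced by the finite integrable quantity $\ind_{u_{1,0}>\xi}(1-\ind_{u_{2,0}>\xi})$, whose $\xi$-integral is $(u_{1,0}-u_{2,0})^+$.

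First, I take the representatives $\tilde u_1,\tilde u_2$ furnished by Theorem \ref{thm:reduction}, so that $F_i^+(t,x,\xi)=\ind_{\tilde u_i(t,x)>\xi}$ a.e.\ for each $t\in[0,T)$. By Fubini's theorem, the quantity to estimate is
\begin{equation*}
\E\bigl\|\bigl(\tilde u_1(t)-\tilde u_2(t)\bigr)^+\bigr\|_{L^1_x}=\E\int_{\mr^N\times\mr}F_1^+(t,x,\xi)\overline{F}_2^+(t,x,\xi)\,\dd x\,\dd\xi,
\end{equation*}
and the target upper bound has the same form with $F_i^+(t)$ replaced by $F_{i,0}=\ind_{u_{i,0}>\xi}$, the latter being finite since $(u_{1,0}-u_{2,0})^+\le |u_{1,0}|+|u_{2,0}|\in L^1(\Omega\times\mr^N)$.

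Next, I fix $t\in[0,T)$ and a partition $0=t_0<t_1<\dots<t_n=t$ of mesh at most $h$, chosen within the dense set $\mathcal{A}\subset[0,T]$ from Proposition \ref{limits} so that neither $m_1$ nor $m_2$ has an atom at any $t_i$, $i=1,\dots,n-1$ (and $m_i$ have no atom at $0$ by Lemma \ref{lem:equil}). Applying Proposition \ref{prop:doubling} on each subinterval $[t_{i-1},t_i]$ gives
\begin{align*}
\E\int\kappa_R(x)&\langle F_1^+(t_i),\varrho_\delta((x,\xi)-\theta_{t_{i-1},t_i}(\cdot))\rangle_{z,\sigma}\langle\overline F_2^+(t_i),\varrho_\delta((x,\xi)-\theta_{t_{i-1},t_i}(\cdot))\rangle_{y,\zeta}\,\dd x\,\dd\xi\\
&-\E\int\kappa_R(x)\langle F_1^+(t_{i-1}),\varrho_\delta((x,\xi)-(\cdot))\rangle_{z,\sigma}\langle\overline F_2^+(t_{i-1}),\varrho_\delta((x,\xi)-(\cdot))\rangle_{y,\zeta}\,\dd x\,\dd\xi\\
&\le Ch^{1/p}\Bigl(\E\,m_1((t_{i-1},t_i]\times\mr^{N+1})+\E\,m_2((t_{i-1},t_i]\times\mr^{N+1})\Bigr)\\
&\quad+Ch^{1/p}\E\int_{t_{i-1}}^{t_i}\!\!\int\bigl(|\sigma|^2\dd\nu^1_{r,z}(\sigma)\,\dd z+|\zeta|^2\dd\nu^2_{r,y}(\zeta)\,\dd y\bigr)\,\dd r+CR\delta h^{1+1/p}.
\end{align*}
I then pass to the limit $\delta\to 0$ exactly as in Theorem \ref{thm:reduction}: the splitting
$F^+(t_i)\overline F^+(t_i)\le F^+(t_i)\ind_{\xi\ge 0}+\overline F^+(t_i)\ind_{\xi<0}$
together with Remark \ref{chi} and the $L^2$-type bound \eqref{integrov} (propagated to a.e.\ $t_i$ as in \eqref{ref}) lets me invoke the generalized Vitali theorem on $\Omega\times B_{2R}\times\mr$; the volume-preserving change of variables $(x,\xi)\mapsto\pi_{t_{i-1},t_i}(x,\xi)$ turns the surviving integrand into $\kappa_R(\theta^x_{t_{i-1},t_i})F_1^+(t_i)\overline F_2^+(t_i)$. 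The limit $R\to\infty$ is then by monotone/dominated convergence, but requires knowing that $F_1^+(t_i)\overline F_2^+(t_i)\in L^1(\Omega\times\mr^N\times\mr)$ for each $i$; this is propagated inductively from the base case $F_{1,0}\overline F_{2,0}$, whose $L^1$ norm equals $\E\|(u_{1,0}-u_{2,0})^+\|_{L^1_x}<\infty$, using the freshly obtained inequality at the previous step.

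Telescoping over $i=1,\dots,n$ and using Definition \ref{def:kinmeasure}(ii) together with \eqref{integrov} bounds the sum of error terms by $Ch^{1/p}$ uniformly in the partition, so sending $h\to 0$ yields
\begin{equation*}
\E\int F_1^+(t,x,\xi)\overline F_2^+(t,x,\xi)\,\dd x\,\dd\xi\le\E\int \ind_{u_{1,0}>\xi}\bigl(1-\ind_{u_{2,0}>\xi}\bigr)\,\dd x\,\dd\xi=\E\|(u_{1,0}-u_{2,0})^+\|_{L^1_x},
\end{equation*}
which is the claim. The main technical obstacle is the inductive integrability of $F_1^+(t_i)\overline F_2^+(t_i)$ needed to justify the limit $R\to\infty$; everything else is a routine adaptation of the single-solution argument in Theorem \ref{thm:reduction}.
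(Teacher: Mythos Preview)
Your proposal is correct and follows exactly the approach the paper intends: the paper's own proof is a two-line sketch that says ``proceeding similarly to Theorem \ref{thm:reduction}, we apply Proposition \ref{prop:doubling},'' and you have simply written out that similarity in full detail (partition, $\delta\to0$ via Vitali/Remark \ref{chi}, inductive $L^1$-integrability for $R\to\infty$, telescoping, $h\to0$). There is no genuine difference in method.
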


\section{Existence}
\label{sec:existence}

In the existence part of the proof of Theorem \ref{thm:main} we make use of the so-called Bhatnagar-Gross-Krook approximation (BGK), which is a standard tool in the deterministic setting. Its stochastic counterpart was established in \cite{bgk} and could be understood as an alternative proof of existence to \cite{debus}.

Let us now briefly describe this method. Towards the proof of existence of a kinetic solution to \eqref{eq}, which is (formally speaking) a distributional solution to the kinetic formulation \eqref{eq:kinform}, we consider the following BGK model
\begin{equation}\label{eq:bgk}
 \begin{split}
\dif F^\varepsilon+ \nabla F^\varepsilon\cdot a\,\dif z-\partial_\xi F^\varepsilon\,b\,\dif z&=-\partial_\xi F^\varepsilon\, g\,\dif W+\frac{1}{2}\partial_\xi(G^2\partial_\xi F^\varepsilon)\,\dif t+\frac{\ind_{u^\varepsilon>\xi}-F^\varepsilon}{\varepsilon}\dif t,\\
F^\varepsilon(0)&=F^\varepsilon_0,
 \end{split}
\end{equation}
where the local density of particles is defined by
$$u^\varepsilon(t,x)=\int_\mr \big(F^\varepsilon(t,x,\xi)-\ind_{0>\xi}\big)\dif\xi.$$
In other words, the unknown kinetic measure is replaced by a right hand side written in terms of $F^\varepsilon$.
Heuristically, solving \eqref{eq:bgk} is significantly easier and can be reduced to solving the homogeneous problem
\begin{equation}\label{eq:aux1}
\begin{split}
\dif X+\nabla X\cdot a\,\dif z-\partial_\xi X\,b\,\dif z&=-\partial_\xi X\, g\,\dif W+\frac{1}{2}\partial_\xi(G^2\partial_\xi X)\,\dif t,\\
X(s)&=X_0,
\end{split}
\end{equation}
establishing the properties of its solution operator and employing Duhamel's principle.
The final idea is that, as the microscopic scale $\varepsilon$ vanishes, $F^\varepsilon$ converges to $F$ which is a generalized kinetic solution to \eqref{eq}. The Reduction theorem, Theorem \ref{thm:reduction}, then applies and as a consequence there exists $u$ such that $F=\ind_{u>\xi}$ and $u$ is the unique kinetic solution to \eqref{eq}.

We point out that the expected regularity in $(x,\xi)$ of solutions to \eqref{eq:aux}, \eqref{eq:bgk} is low, namely $L^\infty$, and therefore the rigorous treatment of the above outlined technique requires a suitable notion of weak solution in the context of rough paths. Here the usual definition of distributional solutions encounters several obstacles due to rough regularity of the driving signals. Motivated by the discussion in Section \ref{sec:hypotheses}, we introduce weak formulations which correspond to solving \eqref{eq:aux} and \eqref{eq:bgk} in the sense of distributions but do not involve any rough path driven terms.

In the sequel, we will use the following notation: for $\alpha\in\mr$ we denote by $\chi_{\alpha}:\mr\rightarrow\mr$ the so-called equilibrium function which is defined as
\begin{equation}\label{def:equil}
\chi_{\alpha}(\xi)=\ind_{0<\xi<\alpha}-\ind_{\alpha<\xi<0}.
\end{equation}

\subsection{Rough transport equation}
\label{sec:roughtr}

This subsection is devoted to the study of the auxiliary equation \eqref{eq:aux1}. Our approach here as well as in Subsection \ref{sec:bgksol} is rough-pathwise and therefore we work with one fixed realization $\omega$ and the probability space $(\Omega,\mf,\prst)$ remains hidden. The stochastic integral will then re-appear in Section \ref{sec:conv} for the final passage to the limit.

It can be seen that the Stratonovich form of \eqref{eq:aux1} reads as follows
\begin{equation*}
\begin{split}
\dif X+\nabla X\cdot a\,\dif z-\partial_\xi X\,b\,\dif z&=-\partial_\xi X\, g\circ\dif W+\frac{1}{4}\partial_\xi X\partial_\xi G^2\,\dif t.
\end{split}
\end{equation*}
Now we recall that (the stochastic process) $\mathbf{\Lambda}$ is the joint lift of $z$ and $W$ constructed in \eqref{jointlift}. Let us fix one realization $\mathbf{\Lambda}(\omega)$.
This leads us to the study of the following rough transport equation
\begin{equation*}\label{eq:tr}
\begin{split}
\dif X&=\left(\begin{array}{c}
			\partial_\xi X\\
			\nabla X
			\end{array}\right)\cdot\left(\begin{array}{ccc}
									\frac{1}{4}\partial_\xi G^2&b& -g\\
									0&-a&0
									\end{array}\right)\,\dif (t,\mathbf{\Lambda}(\omega)),
\end{split}
\end{equation*}
However, for notational simplicity (and with a slight abuse of notation) we may rather write
\begin{equation}\label{eq:aux}
\begin{split}
\dif X+\nabla X\cdot a\,\dif \mathbf{z}-\partial_\xi X\,b\,\dif \mathbf{z}&=-\partial_\xi X\, g\,\dif \mathbf{w}+\frac{1}{4}\partial_\xi X\partial_\xi G^2\,\dif t,\\
X(s)&=X_0,
\end{split}
\end{equation}
where $\mathbf{w}$ denotes the corresponding realization of the Stratonovich lift of $W$ and the cross-iterated integrals between $t$, $z$ and $W$ are not explicitly mentioned.

We introduce two notions of solution to \eqref{eq:aux}. The first definition follows the usual approach from rough path theory which is based on the approximation of the driving signals (see \cite[Definition 3]{caruana}).

\begin{defin}\label{def:strongsol}
Let $(\mathbf{z},\mathbf{w})$ be a geometric H\"older $p$-rough path and suppose that $(z^n,w^n)$ is a sequence of Lipschitz paths such that
$$S_{[p]}(z^n,w^n)\equiv(\mathbf{z}^n,\mathbf{w}^n)\longrightarrow(\mathbf{z},\mathbf{w})$$
uniformly on $[0,T]$ and
$$\sup_n\|(\mathbf{z}^n,\mathbf{w}^n)\|_{\frac{1}{p}\text{-H\"ol}}<\infty.$$
Assume that for each $n\in\mn$
\begin{equation*}
\begin{split}
\dif X^n+\nabla X^n\cdot a\,\dif z^n-\partial_\xi X^n\,b\,\dif z^n&=-\partial_\xi X^n\, g\,\dif w^n+\frac{1}{4}\partial_\xi X^n\partial_\xi G^2\,\dif t\\
X^n(s)&=X_0\in C_b^1(\mr^N\times\mr)
\end{split}
\end{equation*}
has a unique solution $X^n$ which belongs to $C^{1}_b([0,T]\times\mr^N\times\mr)$. Then any limit point (in the uniform topology) of $(X^n)$ is called a solution for the rough PDE, denoted formally by \eqref{eq:aux}.
\end{defin}

Such a solution can be constructed by making use of the method of characteristics established in \cite{caruana} provided the initial datum $X_0$ is sufficiently regular. To be more precise, the associated characteristic system is given by
\begin{equation}\label{eq:char}
\begin{split}
\dif\varphi^0_t&= -b(\varphi_t)\,\dif \mathbf{z}+g(\varphi_t)\,\dif \mathbf{w}-\frac{1}{4}\partial_\xi G^2(\varphi_t)\,\dif t,\\
\dif\varphi^x_t&=a(\varphi_t)\,\dif \mathbf{z}.
\end{split}
\end{equation}
where $\varphi^0_t$ and $\varphi^x_t$ describe the evolution of the $\xi$-coordinate and $x$-coordinate, respectively, of the characteristic curve.
 Let us denote by $\varphi_{s,t}(x,\xi)$ the solution of \eqref{eq:char} starting from $(x,\xi)$ at time $s$. It follows from \cite[Proposition 11.11]{friz} that under our assumptions $\varphi$ defines a flow of $C^2$-diffeomorphisms and we denote by $\psi$ the corresponding inverse flow.

Our first existence and uniqueness result for \eqref{eq:aux} is taken from \cite[Theorem 4]{caruana} and reads as follows.

\begin{prop}\label{prop:aux}
Let $X_0\in C_b^1(\mr^N\times\mr).$ Then there exists a unique solution to \eqref{eq:aux} given explicitly by
$$X(t,x,\xi;s)=X_0\big(\psi_{s,t}(x,\xi)\big).$$
\end{prop}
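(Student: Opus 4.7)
The plan is to prove the proposition by the classical method of characteristics, carried out first along a smooth approximation of the driving signals and then passed to the rough limit. The rough characteristic system is exactly \eqref{eq:char}, and under the standing assumptions on $a,b,g$ (namely $\mathrm{Lip}^{\gamma+2}$ and $\mathrm{Lip}^{\gamma+3}$ with $\gamma>p$), the vector fields driving \eqref{eq:char} satisfy the hypotheses of Theorem \ref{thm:existence}. Hence \eqref{eq:char} admits a unique RDE solution $\varphi_{s,t}$ defining a flow of $C^2$-diffeomorphisms, and we denote its inverse by $\psi_{s,t}$.

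To begin, I would fix a sequence $(z^n,w^n)$ of Lipschitz paths whose step-$[p]$ lifts $(\mathbf{z}^n,\mathbf{w}^n)$ converge to $(\mathbf{z},\mathbf{w})$ uniformly on $[0,T]$ with $\sup_n\|(\mathbf{z}^n,\mathbf{w}^n)\|_{\frac{1}{p}\text{-H\"ol}}<\infty$, as in Definition \ref{def:strongsol}. For each $n$ the associated characteristic ODE is classical and generates a smooth flow $\varphi^n_{s,t}$ with inverse $\psi^n_{s,t}$. A direct chain-rule computation shows that the function
\[
X^n(t,x,\xi;s):=X_0\bigl(\psi^n_{s,t}(x,\xi)\bigr)
\]
is the unique $C_b^1$ solution of the smooth transport PDE corresponding to $(z^n,w^n)$: indeed, writing $(y,\eta)=\psi^n_{s,t}(x,\xi)$ and differentiating $X_0(y,\eta)$ in $t$ along the characteristic flow produces precisely the drift, divergence and $\tfrac14\partial_\xi G^2$ terms of the smooth analogue of \eqref{eq:aux}, since $\psi^n$ solves the time-reversed characteristic ODE.

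The next step is the passage to the rough limit. By Theorem \ref{thm:existence}(i)--(ii) applied to \eqref{eq:char}, the flows $\varphi^n$ converge to $\varphi$ in the sense of a flow of $C^2$-diffeomorphisms, uniformly on compacts together with their first derivatives, and the same holds for the inverse flows $\psi^n\to\psi$. Combined with $X_0\in C^1_b(\mr^N\times\mr)$, this yields $X_0\circ\psi^n\to X_0\circ\psi$ uniformly on every compact set of $[0,T]\times\mr^N\times\mr$. Thus the limit point $X(t,x,\xi;s):=X_0(\psi_{s,t}(x,\xi))$ exists in the uniform topology, proving existence in the sense of Definition \ref{def:strongsol}. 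Uniqueness follows from the same argument: any limit point along any admissible approximating sequence must agree with $X_0\circ\psi$, because the continuity of the RDE solution map fixes the limiting flow $\psi$ independently of the chosen approximation.

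The main technical obstacle is ensuring that the convergence of the approximate characteristic flows is strong enough to transfer to $X_0\circ\psi^n$, and that this transfer is compatible with the uniform convergence required in Definition \ref{def:strongsol}. This is exactly the content of \cite[Proposition 11.11]{friz} and \cite[Lemma 13]{crisan} cited in Theorem \ref{thm:existence}: the flow, its inverse, and their spatial derivatives depend continuously on the driving rough path, uniformly on bounded sets. Once this input is available, the rest of the proof is a routine chain-rule computation at the smooth level followed by a limit; this is the content of \cite[Theorem 4]{caruana}, to which the statement ultimately reduces.
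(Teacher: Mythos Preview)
Your proposal is correct and follows exactly the approach the paper relies on: the paper simply invokes \cite[Theorem 4]{caruana}, and what you have written is a faithful sketch of that argument---solve the smooth transport equation via the classical method of characteristics to get $X^n=X_0\circ\psi^n$, then use the rough path continuity of the flow (Theorem \ref{thm:existence}) to pass to the limit and identify the unique limit point as $X_0\circ\psi$.
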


We conclude that the solution operator 
$\mathcal{S}(t,s)X_0=X_0\big(\psi_{s,t}(x,\xi)\big)$
is well defined on $C^1_b(\mr^N\times\mr)$.
Nevertheless, as the right hand side makes sense even for more general initial conditions $X_0$ which do not necessarily fulfill the assumptions of Proposition \ref{prop:aux}, the domain of definition of the operator $\mathcal{S}$ can be extended. In particular, since diffeomorphisms preserve sets of measure zero the above is well defined also if $X_0$ is only defined almost everywhere. In this case, we define consistently
$$\mathcal{S}(t,s)X_0=X_0\big(\psi_{s,t}(x,\xi)\big),\qquad 0\leq s\leq t\leq T.$$
The aim is to show that this extension is the solution operator to \eqref{eq:aux} in a weak sense. Towards this end, it is necessary to weaken the assumption upon the initial condition and therefore a suitable notion of weak solution is required.

In the following proposition we establish basic properties of the ope\-rator $\mathcal{S}.$

\begin{prop}\label{oper}
Let $\mathcal{S}=\{\mathcal{S}(t,s),0\leq s\leq t\leq T\}$ be defined as above. Then
\begin{enumerate}
 \item[\emph{(i)}]\label{item1} for any $p\in[1,\infty]$, $\mathcal{S}$ is a family of linear operators on $L^p(\mr^N\times\mr)$ which is uniformly bounded in the operator norm, i.e. there exists $C>0$ such that for any $X_0\in L^p(\mr^N\times\mr)$, $0\leq s\leq t\leq T$,
\begin{equation}\label{first} 
 \big\|\mathcal{S}(t,s)X_0\big\|_{L^p_{x,\xi}}\leq C\|X_0\|_{L^p_{x,\xi}},
\end{equation}
\item[\emph{(ii)}]\label{item4} $\mathcal{S}$ verifies the semigroup law
\begin{equation*}
\begin{split}
\mathcal{S}(t,s)&=\mathcal{S}(t,r)\circ\mathcal{S}(r,s),\qquad 0\leq s\leq r\leq t\leq T,\\
\mathcal{S}(s,s)&=\mathrm{Id},\hspace{3.3cm} 0\leq s\leq T.
\end{split}
\end{equation*}
\end{enumerate}

\begin{proof}
The proof of (ii) as well as (i) in the case of $p=\infty$ follows immediately from the definition of the operator $\mathcal{S}$ and the flow property of $\psi$. In order to show (i) for $p\in[1,\infty)$, we observe that due to \cite[Proposition 11.11]{friz} the map
$$(s,t,x,\xi)\longmapsto |\mathrm{J}\psi_{s,t}(x,\xi)|,$$
where $\mathrm{J}$ denotes the Jacobian, is bounded from above and below by a positive constant. Consequently, using the notation of Proposition \ref{prop:aux}, we have
\begin{equation*}
\begin{split}
\|X_0\|_{L^p_{x,\xi}}^p&=\int_{\mr^N\times\mr}\big|X\big(t,\varphi_{s,t}(x,\xi);s\big)\big|^p\,\dif x\,\dif\xi\\
&=\int_{\mr^N\times\mr}|X(t,x,\xi;s)|^p\,|\mathrm{J}\psi_{s,t}(x,\xi)|\,\dif x\,\dif \xi\geq C\|X\|^p_{L^p_{x,\xi}}.
\end{split}
\end{equation*}
\end{proof}
\end{prop}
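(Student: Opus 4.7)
The plan is to deduce both claims directly from the explicit formula $\mathcal{S}(t,s)X_0=X_0\circ\psi_{s,t}$ together with the properties of the flow $\psi$ supplied by Theorem \ref{thm:existence}.

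For part (ii), since $\varphi$ is a flow of diffeomorphisms associated to \eqref{eq:char}, its inverse $\psi$ satisfies $\psi_{s,s}=\mathrm{Id}$ and $\psi_{s,t}=\psi_{s,r}\circ\psi_{r,t}$ for $0\leq s\leq r\leq t\leq T$ (simply by inverting the identity $\varphi_{s,t}=\varphi_{r,t}\circ\varphi_{s,r}$). Plugging this in yields
$$\mathcal{S}(t,s)X_0 \;=\; X_0\circ\psi_{s,r}\circ\psi_{r,t} \;=\; (\mathcal{S}(r,s)X_0)\circ\psi_{r,t} \;=\; \mathcal{S}(t,r)\mathcal{S}(r,s)X_0,$$
and $\mathcal{S}(s,s)=\mathrm{Id}$ is immediate. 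Linearity of $\mathcal{S}(t,s)$ is obvious since it acts by composition on the right.

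For part (i), the case $p=\infty$ is trivial because $\psi_{s,t}$ is a bijection, so composition preserves the essential supremum. For $p\in[1,\infty)$ I would change variables $y=\psi_{s,t}(x,\xi)$ so that $\dif x\,\dif\xi=|\mathrm{J}\varphi_{s,t}(y)|\,\dif y$ and
$$\|\mathcal{S}(t,s)X_0\|_{L^p_{x,\xi}}^p \;=\; \int_{\mr^N\times\mr}|X_0(y)|^p\,|\mathrm{J}\varphi_{s,t}(y)|\,\dif y.$$
The desired estimate \eqref{first} then reduces to establishing a uniform-in-$(s,t,y)$ upper bound on $|\mathrm{J}\varphi_{s,t}(y)|$.

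The only substantive step, and the one I would expect to be the crux, is this Jacobian bound. It follows from Theorem \ref{thm:existence}(ii): under the standing hypothesis $a,b\in\mathrm{Lip}^{\gamma+2}$ and $g\in\mathrm{Lip}^{\gamma+3}$ with $\gamma>p$, the first-order spatial derivatives of $\varphi_{s,t}$ are bounded uniformly in $(s,t,y)$ in terms of the Lipschitz norms of the vector fields appearing in \eqref{eq:char} and the rough-path norm of the joint lift $\mathbf{\Lambda}$, which forces $|\mathrm{J}\varphi_{s,t}(y)|\leq C$. Note that in contrast with the flow $\pi$ of \eqref{eq:fl1}, $\varphi$ is \emph{not} volume-preserving in general: the additional terms $g\,\dif \mathbf{w}$ and $\tfrac14\partial_\xi G^2\,\dif t$ in \eqref{eq:char} destroy the divergence-free structure, so one really needs the quantitative Jacobian bound rather than the pointwise equality $|\mathrm{J}\varphi_{s,t}|\equiv 1$ available for $\pi$ and $\theta$.
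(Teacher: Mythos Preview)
Your proposal is correct and follows essentially the same route as the paper: both deduce (ii) from the flow property of $\psi$, handle $p=\infty$ trivially, and for $p\in[1,\infty)$ perform a change of variables and invoke a uniform Jacobian bound coming from the rough-path flow estimates (the paper cites \cite[Proposition 11.11]{friz} directly, which is precisely what Theorem~\ref{thm:existence} wraps). The only cosmetic difference is that the paper bounds $|\mathrm{J}\psi_{s,t}|$ from below after writing $\|X_0\|_{L^p}^p$ as an integral of $|X(t,\cdot)|^p$, whereas you bound $|\mathrm{J}\varphi_{s,t}|$ from above after writing $\|\mathcal{S}(t,s)X_0\|_{L^p}^p$ as an integral of $|X_0|^p$; these are equivalent, and your remark that $\varphi$ is \emph{not} volume-preserving (so the Jacobian bound is genuinely needed) is a useful clarification.
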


\begin{defin}\label{def:weaksol}
$X:[s,T]\times\mr^N\times\mr\rightarrow\mr$ is called a weak solution to \eqref{eq:aux} provided
\begin{equation}\label{eq:weaktransport}
\begin{split}
\dif X\big(t,\varphi_{s,t}\big)&=0\\
X(s)&=X_0
\end{split}
\end{equation}
holds true in the sense of $\mathcal{D}'(\mr^N\times\mr)$, i.e. for all $\phi\in C^1_c(\R^N\times\R)$ and $t\in[s,T]$
$$\big\langle X\big(t,\varphi_{s,t}\big),\phi\big\rangle=\langle X_0,\phi\rangle.$$
\end{defin}


\begin{cor}\label{cor:weaksol}
Let $X_0\in L^\infty(\mr^N\times\mr)$. Then there exists a unique $X\in L^\infty([s,T]\times\mr^N\times\mr)$ that is a weak solution to \eqref{eq:aux}. Moreover, it is represented by
$$X(t)=\mathcal{S}(t,s)X_0.$$

\begin{proof}
In order to prove existence, observe that if $X$ is given by the above representation then clearly for all $0\leq s\leq t\leq T$
$$X\big(t,\varphi_{s,t}(x,\xi);s\big)=X_0\big(\psi_{s,t}\circ\varphi_{s,t}(x,\xi)\big)=X_0(x,\xi)\qquad \text{ for a.e. }(x,\xi)$$
and hence $X$ solves \eqref{eq:weaktransport}. However, let us also give another proof of existence which in addition justifies Definition \ref{def:weaksol} as the appropriate notion of weak solution to \eqref{eq:aux}.
To this end, let us consider smooth approximations of $X_0$, namely, let $(\varrho_\delta)$ be an approximation to the identity on $\mr^N\times\mr$ and set $X_0^\delta=X_0*\varrho_\delta$. According to Proposition \ref{prop:aux}, there exists a unique $X^\delta$ which solves \eqref{eq:aux} with the initial condition $X_0^\delta$ and is given by $X^\delta(t;s)=X^\delta_0(\psi_{s,t})$ hence
\begin{equation}\label{eq:delta}
X^\delta\big(t,\varphi_{s,t}(x,\xi);s\big)=X_0^\delta(x,\xi).
\end{equation}
Moreover,
$$X_0^\delta\overset{w^*}{\longrightarrow} X_0\quad\text{ in }\quad L^\infty(\mr^N\times\mr)$$
so
$$X^\delta(t;s)\overset{w^*}{\longrightarrow} X_0(\psi_{s,t})\quad\text{ in }\quad L^\infty(\mr^N\times\mr)\quad\forall t\in[s, T]$$
which justifies the passage to the limit in \eqref{eq:delta} and completes the proof.

Regarding uniqueness, due to linearity it is enough to show that any weak solution with $X_0=0$ vanishes identically. Let $X$ be such a weak solution, i.e. it holds
$$\langle X(t,\varphi_{s,t};s),\phi\rangle=0\qquad \forall\phi\in C^1_c(\mr^N\times\mr).$$
Testing by the mollifier $\varrho_\delta((x,\xi)-(\cdot,\cdot))$ we deduce that for all $(x,\xi)$
\begin{align*}
0&=\int_{\mr^N\times\mr}X\big(t,\varphi_{s,t}(y,\zeta);s\big)\,\varrho_\delta\big((x,\xi)-(y,\zeta)\big)\,\dif y\,\dif \zeta
\end{align*}
and since for a.e. $(x,\xi)$ the right hand side converges to $X\big(t,\varphi_{s,t}(x,\xi);s\big)$, the claim follows.
\end{proof}
\end{cor}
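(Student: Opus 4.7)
The plan is to handle existence by writing down an explicit candidate built from the characteristic flow, and to handle uniqueness by a mollification argument that collapses the weak formulation onto a pointwise a.e.\ identity.

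For existence, the natural guess is $X(t) := \mathcal{S}(t,s) X_0 = X_0(\psi_{s,t})$, which lies in $L^\infty$ with norm bounded by $\|X_0\|_{L^\infty}$ thanks to Proposition \ref{oper}(i). Because $\psi_{s,t} \circ \varphi_{s,t} = \mathrm{Id}$ as part of Theorem \ref{thm:existence}(i), one has the pointwise identity $X(t, \varphi_{s,t}(x,\xi)) = X_0(x,\xi)$ for a.e.\ $(x,\xi)$, so testing against any $\phi \in C^1_c(\mr^N \times \mr)$ immediately yields $\langle X(t, \varphi_{s,t}), \phi\rangle = \langle X_0, \phi\rangle$. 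To further justify that Definition \ref{def:weaksol} is the correct notion of weak solution, I would also give a second existence argument by smooth approximation: mollify $X_0^\delta := X_0 * \varrho_\delta \in C^1_b$, invoke Proposition \ref{prop:aux} to obtain the classical solution $X^\delta(t;s) = X_0^\delta(\psi_{s,t})$ verifying $X^\delta(t, \varphi_{s,t}) = X_0^\delta$ pointwise, and pass to the weak-$*$ limit $\delta \to 0$ in $L^\infty$. The passage to the limit uses a change of variables $y = \psi_{s,t}(x,\xi)$ with Jacobian bounded from above and below (Proposition \ref{oper}(i) and Theorem \ref{thm:existence}(ii)) to convert weak-$*$ convergence of $X_0^\delta$ into that of $X^\delta(t;s)$.

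For uniqueness, by linearity it suffices to show that any weak solution $X$ with $X_0 \equiv 0$ must vanish. The identity $\langle X(t, \varphi_{s,t}), \phi\rangle = 0$ for every $\phi \in C^1_c$, applied with $\phi = \varrho_\delta((x,\xi) - \cdot)$ for an arbitrary fixed $(x,\xi)$, gives
$$\int_{\mr^N \times \mr} X(t, \varphi_{s,t}(y, \zeta))\, \varrho_\delta((x,\xi) - (y,\zeta))\, \dd y \, \dd \zeta = 0.$$
Since $X(t, \varphi_{s,t}(\cdot,\cdot)) \in L^\infty \subset L^1_{\mathrm{loc}}$, the Lebesgue differentiation theorem gives a.e.\ convergence of the left hand side to $X(t, \varphi_{s,t}(x,\xi))$ as $\delta \to 0$, so $X(t, \varphi_{s,t}) = 0$ a.e. Changing variables via the diffeomorphism $\varphi_{s,t}$, with Jacobian controlled by Proposition \ref{oper}(i), then yields $X(t) = 0$ a.e.

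The main obstacle I expect is more bookkeeping than analysis: one must verify that the mollifier $\varrho_\delta((x,\xi)-\cdot)$ is indeed an admissible test function and that all identities above hold for a set of $(x,\xi)$ of full measure, simultaneously after the change of variables through $\varphi_{s,t}$. This relies crucially on the regularity and Jacobian bounds provided by Theorem \ref{thm:existence}(ii) and Proposition \ref{oper}(i); once these are in place, the heart of the corollary is really the flow identity $\psi_{s,t} \circ \varphi_{s,t} = \mathrm{Id}$ inherited from the RDE theory for \eqref{eq:char}, everything else being a routine density/duality argument.
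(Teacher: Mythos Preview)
Your proposal is correct and follows essentially the same approach as the paper's proof: the direct verification of existence via $\psi_{s,t}\circ\varphi_{s,t}=\mathrm{Id}$, the auxiliary mollification argument using Proposition~\ref{prop:aux} with a weak-$*$ passage to the limit, and the uniqueness argument by testing against $\varrho_\delta((x,\xi)-\cdot)$ and invoking Lebesgue differentiation are all exactly what the paper does. Your additional remarks on the Jacobian bounds from Proposition~\ref{oper}(i) and Theorem~\ref{thm:existence}(ii) simply make explicit what the paper leaves implicit.
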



\begin{lemma}\label{indik}
For all $0\leq s\leq t\leq T$ it holds true that
$$\mathcal{S}(t,s)\ind_{0>\xi}-\ind_{0>\xi}=0.$$

\begin{proof}
It follows from \eqref{eq:null} that for all $x\in\mr^N$ the solution to \eqref{eq:char} starting from $(x,0)$ satisfies $\varphi^{0}_{s,t}(x,0)\equiv 0$. Moreover, since the solution to \eqref{eq:char} is unique, we deduce that
$$\varphi^{0}_{s,t}(x,\xi)\begin{cases}
						\geq 0,&\quad \text{if }\;\xi\geq 0,\\
						\leq 0,&\quad \text{if }\;\xi\leq 0.
					\end{cases}$$
Indeed, this can be proved by contradiction: let $\xi>0$ and assume that for some $t\in[s,T]$ and $x\in\R^N$, $\varphi^0_{s,t}(x,\xi)<0$. Since $t\mapsto\varphi_{s,t}^0(x,\xi)$ is continuous and $\varphi_{s,s}^0(x,\xi)>0$ there exists $r\in[s,t]$ such that $\varphi^0_{s,r}(x,\xi)=0$. Now, we may take $\varphi_{s,r}(x,\xi)=(0,\varphi^x_{s,r}(x,\xi))$ as the initial condition for \eqref{eq:char} at time $r$ to obtain, on the one hand,
$$\varphi^0_{r,t}(\varphi_{s,r}(x,\xi))=\varphi^0_{s,t}(x,\xi)$$
and, on the other hand,
$$\varphi^0_{r,t}(\varphi_{s,r}(x,\xi))=\varphi^0_{r,t}(0,\varphi^x_{s,r}(x,\xi))=0.$$
which is a contradiction. As a consequence, $\mathcal{S}(t,s)\ind_{0>\xi}=\ind_{0>\xi}$ and the claim follows.
%
\end{proof}
\end{lemma}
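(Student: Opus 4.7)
The plan is to exploit the vanishing condition \eqref{eq:null} to show that the hyperplane $\{\xi=0\}$ is invariant under the characteristic flow \eqref{eq:char}, and then to use this invariance to conclude that the flow preserves the sign of $\xi$.

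First I would note that \eqref{eq:null} together with $G^2(x,\xi)=\sum_k g_k(x,\xi)^2$ implies not only $b(x,0)=0$ and $g(x,0)=0$ but also $\partial_\xi G^2(x,0)=2\sum_k g_k(x,0)\partial_\xi g_k(x,0)=0$, so every driving coefficient in the $\varphi^0$-equation of \eqref{eq:char} vanishes on $\{\xi=0\}$. Consequently $t\mapsto(\varphi^x_{s,t}(x,0),\,0)$, where $\varphi^x$ solves the reduced RDE $\dif\varphi^x=a(\varphi^x,0)\,\dif\mathbf{z}$ started from $x$ at time $s$, satisfies \eqref{eq:char} with initial datum $(x,0)$ at time $s$. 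Uniqueness of the RDE flow (Theorem \ref{thm:existence}) then forces $\varphi^0_{s,t}(x,0)\equiv 0$ for every $s\leq t$ and every $x$.

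Next I would bootstrap this invariance to a sign-preservation property: $\varphi^0_{s,t}(x,\xi)$ is strictly positive (resp. strictly negative) whenever $\xi$ is. The argument runs by contradiction: if for some $\xi>0$ the trajectory $t\mapsto\varphi^0_{s,t}(x,\xi)$ became non-positive, continuity in $t$ would provide an intermediate $r\in[s,t]$ with $\varphi^0_{s,r}(x,\xi)=0$. Applying the flow property at $r$ and the previous step to the initial condition $\varphi_{s,r}(x,\xi)=(\varphi^x_{s,r}(x,\xi),\,0)$, we would obtain $\varphi^0_{s,t}(x,\xi)=\varphi^0_{r,t}(\varphi^x_{s,r}(x,\xi),0)=0$, contradicting the assumed strict negativity (the equality case $\varphi^0_{s,t}(x,\xi)=0$ is ruled out directly by the same identity). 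The case $\xi<0$ is symmetric. Finally I would transfer the sign statement from $\varphi_{s,t}$ to its inverse $\psi_{s,t}$: since $\varphi_{s,t}$ is a bijection sending each of the three strata $\{\xi>0\}$, $\{\xi=0\}$, $\{\xi<0\}$ into itself, so does $\psi_{s,t}=\varphi_{s,t}^{-1}$. Hence $\ind_{0>\psi^0_{s,t}(x,\xi)}=\ind_{0>\xi}$ for every $(x,\xi)$, which is precisely $\mathcal{S}(t,s)\ind_{0>\xi}=\ind_{0>\xi}$.

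I do not anticipate any serious obstacle. The only delicate point is that the ``no crossing of $\{\xi=0\}$'' step must be justified in the rough-path setting: one cannot invoke a classical Cauchy--Lipschitz uniqueness theorem but must rely on the uniqueness of the RDE flow and on its being a $C^2$-diffeomorphism. Both ingredients are however already available from Theorem \ref{thm:existence} applied to the characteristic vector fields, which are smooth enough under the standing $\mathrm{Lip}^{\gamma+k}$ assumptions on $a$, $b$ and $g$.
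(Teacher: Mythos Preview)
Your proposal is correct and follows essentially the same route as the paper: use \eqref{eq:null} to see that the $\xi$-component of every coefficient in \eqref{eq:char} vanishes on $\{\xi=0\}$, deduce $\varphi^0_{s,t}(x,0)\equiv 0$ by RDE uniqueness, and then run the intermediate-value/flow-property contradiction to get sign preservation. You are slightly more explicit than the paper in two places---checking that $\partial_\xi G^2(x,0)=0$ as well, and spelling out the passage from $\varphi_{s,t}$ to its inverse $\psi_{s,t}$ via the strata decomposition---but these are refinements of the same argument, not a different one.
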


\subsection{Solution to the BGK model}
\label{sec:bgksol}

Throughout this section we continue with our rough-pathwise analysis, that is, we consider one fixed realization of the driving path $\mathbf{\Lambda}(\omega)$ and therefore the underlying probability space $(\Omega,\mf,\prst)$ remains hidden.

We will apply the auxiliary results for the rough transport equation \eqref{eq:aux} and establish existence and uniqueness for the BGK model \eqref{eq:bgk}.
First of all, it is necessary to specify in which sense \eqref{eq:bgk} is to be solved. As it can be seen in Definition \ref{def:strongsol}, the solution given by Proposition \ref{prop:aux} satisfies the equation \eqref{eq:aux} only on a formal level. This obstacle was overcome by the definition of weak solution (see Definition \ref{def:weaksol}) which also permitted generalization to less regular initial data, namely $X_0\in L^\infty(\mr^N\times\mr)$. We continue in this fashion and define the notion of weak solution to the BGK model similarly.

Concerning the initial data for the BGK model \eqref{eq:bgk}, let us simply set $F^\varepsilon_0=\ind_{u_0>\xi}$ which is sufficient for our purposes. Nevertheless, our proof of existence and convergence of \eqref{eq:bgk} would remain valid if we considered $F_0^\varepsilon=\ind_{u_0^\varepsilon>\xi}$ where $(u_0^\varepsilon)$ is a suitable approximation of $u_0$.

\begin{defin}\label{def:bgk}
Let $\varepsilon>0$. Then $F^\varepsilon\in L^\infty([0,T]\times\mr^N\times\mr)$ satisfying $F^\varepsilon-\ind_{0>\xi}\in L^1([0,T]\times\mr^N\times\mr)$ is called a weak solution to the BGK model \eqref{eq:bgk} with initial condition $F_0^\varepsilon$ provided
\begin{equation}\label{eq:bgk2}
\begin{split}
\dif F^\varepsilon(t,\varphi_{0,t})&=\frac{\ind_{u^\varepsilon(t)>\xi}\circ\varphi_{0,t}-F^\varepsilon(t,\varphi_{0,t})}{\varepsilon}\dif t\\
F^\varepsilon(0)&=F_0^\varepsilon
\end{split}
\end{equation}
holds true in the sense of $\mathcal{D}'(\mr^N\times\mr)$\footnote{By $\ind_{u^\varepsilon(t)>\xi}\circ\varphi_{0,t}$ we denote the composition of $(x,\xi)\mapsto\ind_{u^\varepsilon(t,x)>\xi}$ with $(x,\xi)\mapsto\varphi_{0,t}(x,\xi)$.}, i.e. for all $\phi\in C^1_c(\R^N\times\R)$ and $t\in[0,T]$
$$\big\langle F^\varepsilon(t,\varphi_{0,t}),\phi\big\rangle=\langle F_0^\varepsilon,\phi\rangle+\frac{1}{\varepsilon}\int_0^t\big\langle \ind_{u^\varepsilon(s)>\xi}\circ\varphi_{0,s}-F^\varepsilon(s,\varphi_{0,s}),\phi\big\rangle\dd s.$$
\end{defin}

The result reads as follows.

\begin{thm}\label{duhamel}
For any $\varepsilon>0$, there exists a unique weak solution of the BGK model \eqref{eq:bgk} and it is represented by
\begin{equation}\label{eq:sol}
F^\varepsilon(t)=\me^{-\frac{t}{\varepsilon}}\mathcal{S}(t,0)F_0^\varepsilon+\frac{1}{\varepsilon}\int_0^t\me^{-\frac{t-s}{\varepsilon}}\mathcal{S}(t,s)\ind_{u^\varepsilon(s)>\xi}\,\dif s.
\end{equation}

\begin{proof}
By Duhamel's principle, the problem \eqref{eq:bgk2} admits an equivalent integral representation
\begin{equation*}
F^\varepsilon(t,\varphi_{0,t})=\me^{-\frac{t}{\varepsilon}}F_0^\varepsilon+\frac{1}{\varepsilon}\int_0^t\me^{-\frac{t-s}{\varepsilon}}\ind_{u^\varepsilon(s)>\xi}(\varphi_{0,s})\,\dif s
\end{equation*}
which can be rewritten as \eqref{eq:sol}. Recall, that the local densities are defined as follows
\begin{equation}\label{dens}
u^\varepsilon(t,x)=\int_\mr \big(F^\varepsilon(t,x,\xi)-\ind_{0>\xi}\big)\,\dif \xi
\end{equation}
hence the function $F^\varepsilon$ is not integrable with respect to $\xi$. For the purpose of the proof it is therefore more convenient to consider rather $f^\varepsilon(t)=F^\varepsilon(t)-\ind_{0>\xi}$ which is integrable and prove its existence. Moreover, we will show $f^\varepsilon$ also admits an integral representation, similar to \eqref{eq:sol}.
Indeed, due to Lemma \ref{indik} and Corollary \ref{cor:weaksol}, $\ind_{0>\xi}=\mathcal{S}(t,s)\ind_{0>\xi}$ is the unique weak solution to \eqref{eq:aux} hence $f^\varepsilon$ solves
\begin{equation}\label{bgk4}
\begin{split}
\dif f^\varepsilon(t,\varphi_{0,t})&=\frac{\chi_{u^\varepsilon(t)}\circ\varphi_{0,t}-f^\varepsilon(t,\varphi_{0,t})}{\varepsilon}\,\dif t\\
f^\varepsilon(0)&=\chi_{u_0}
\end{split}
\end{equation}
in the sense of distributions. By a similar reasoning as above, \eqref{bgk4} has the integral representation
\begin{equation}\label{eq:hsol}
f^\varepsilon(t)=\me^{-\frac{t}{\varepsilon}}\mathcal{S}(t,0)\chi_{u_0}+\frac{1}{\varepsilon}\int_0^t \me^{-\frac{t-s}{\varepsilon}}\mathcal{S}(t,s)\chi_{u^\varepsilon(s)}\,\dif s
\end{equation}
and thus can be solved by a fixed point method.
According to the identity
\begin{equation*}
\int_\mr|\chi_{\alpha}-\chi_{\beta}|\,\dif \xi=|\alpha-\beta|,\qquad \alpha,\,\beta\in\mr,
\end{equation*}
some space of $\xi$-integrable functions would be well suited to deal with the nonlinearity term $\chi_{u^\varepsilon}.$ Let us denote $\mathscr{H}=L^\infty(0,T;L^1(\mr^N\times\mr))$ and show that the mapping
\begin{equation*}
\begin{split}
\big(\mathscr{K}g\big)&(t)=\me^{-\frac{t}{\varepsilon}}\mathcal{S}(t,0)\chi_{u_0}+\frac{1}{\varepsilon}\int_0^t\me^{-\frac{t-s}{\varepsilon}}\mathcal{S}(t,s)\chi_{v(s)}\,\dif s,
\end{split}
\end{equation*}
where the local density $v(s)=\int_\mr g(s,\xi)\,\dif \xi$ is defined consistently with \eqref{dens}, is a contraction on $\mathscr{H}$.
Let $g,\,g_1,\,g_2\in \mathscr{H}$ with corresponding local densities $v,\,v_1,$ $v_2$. By  Proposition \ref{oper} and the assumptions on initial data, we arrive at
\begin{equation*}
\begin{split}
\big\|(\mathscr{K}g)(t)\big\|_{L^1_{x,\xi}}&\leq \me^{-\frac{t}{\varepsilon}}\big\|\mathcal{S}(t,0)\chi_{u_0}\big\|_{L^1_{x,\xi}}+\frac{1}{\varepsilon}\int_0^t\me^{-\frac{t-s}{\varepsilon}}\big\|\mathcal{S}(t,s)\chi_{v(s)}\big\|_{L^1_{x,\xi}}\dif s\\
&\leq C\Big(\|u_0\|_{L^1_x}+\sup_{0\leq s\leq t}\|v(s)\|_{L^1_x}\Big)
\end{split}
\end{equation*}
with a constant independent on $t$,
hence
\begin{equation*}
\begin{split}
\big\|\mathscr{K}g\big\|_{\mathscr{H}}&\leq C\big(\|u_0\|_{L^1_x}+\|g\|_{\mathscr{H}}\big)<\infty.
\end{split}
\end{equation*}
Next, we estimate
\begin{equation*}
\begin{split}
\big\|(\mathscr{K} g_1)(t)-(\mathscr{K}g_2)(t)\big\|_{L^1_{x,\xi}}&\leq \frac{1}{\varepsilon}\int_0^t\me^{-\frac{t-s}{\varepsilon}}\big\|\mathcal{S}(t,s)(\chi_{v_1(s)}-\chi_{v_2(s)})\big\|_{L^1_{x,\xi}}\dif s\\
&\leq\frac{C}{\varepsilon}\int_0^t\me^{-\frac{t-s}{\varepsilon}}\big\|\chi_{v_1(s)}-\chi_{v_2(s)}\big\|_{L^1_{x,\xi}}\dif s\\
&\leq \frac{C}{\varepsilon}\int_0^t\me^{-\frac{t-s}{\varepsilon}}\|v_1(s)-v_2(s)\|_{L^1_{x}}\dif s\\
&\leq \frac{C}{\varepsilon}\int_0^t\me^{-\frac{t-s}{\varepsilon}}\|g_1(s)-g_2(s)\|_{L^1_{x,\xi}}\dif s,
\end{split}
\end{equation*}
so
$$\big\|\mathscr{K} g_1-\mathscr{K}g_2\big\|_{\mathscr{H}}\leq C\big (1-\me^{-\frac{T}{\varepsilon}}\big)\|g_1-g_2\|_{\mathscr{H}}$$
and according to the Banach fixed point theorem, the mapping $\mathscr{K}$ has a unique fixed point in $\mathscr{H}$ provided $T$ was small enough. Nevertheless, since the choice of $T$ does not depend on the initial condition, extension of this existence and uniqueness result to
the whole interval $[0, T ]$ can be done by considering the equation on smaller intervals $[0,\tilde T],\,[\tilde T,2\tilde T]$, etc. and repeating the above procedure.
As a consequence, we obtain the existence of a unique weak solution to \eqref{eq:bgk} that is given by \eqref{eq:sol} and the proof is complete.
\end{proof}
\end{thm}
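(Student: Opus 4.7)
The plan is to convert the BGK equation \eqref{eq:bgk2} via Duhamel's principle into an integral equation whose fixed point is sought in a suitable space of $\xi$-integrable functions, and then apply the Banach contraction principle.

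First, I will observe that applying Duhamel's principle formally along the flow $\varphi_{0,t}$ to \eqref{eq:bgk2} yields
\[
F^\varepsilon(t,\varphi_{0,t}) = \me^{-t/\varepsilon}F_0^\varepsilon + \tfrac{1}{\varepsilon}\int_0^t \me^{-(t-s)/\varepsilon}\ind_{u^\varepsilon(s)>\xi}(\varphi_{0,s})\,\dd s,
\]
which after composing with $\psi_{0,t}$ (and using the semigroup law from Proposition \ref{oper}(ii)) is exactly the formula \eqref{eq:sol}. The issue is that $F^\varepsilon$ itself is not integrable in $\xi$, so I cannot directly fix the point in an $L^1$ space. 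I therefore pass to the modified unknown $f^\varepsilon(t) := F^\varepsilon(t) - \ind_{0>\xi}$. By Lemma \ref{indik} together with Corollary \ref{cor:weaksol}, $\ind_{0>\xi}$ is the unique weak solution to the rough transport equation \eqref{eq:aux} with initial condition $\ind_{0>\xi}$, so subtracting it from \eqref{eq:bgk2} produces the equivalent problem \eqref{bgk4} for $f^\varepsilon$, whose Duhamel form is \eqref{eq:hsol}.

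Next, I set $\mathscr{H} = L^\infty(0,T;L^1(\R^N\times\R))$ and define the map $\mathscr{K}$ sending $g \in \mathscr{H}$ with local density $v(s)=\int_\R g(s,\xi)\,\dd\xi$ to
\[
(\mathscr{K}g)(t) = \me^{-t/\varepsilon}\mathcal{S}(t,0)\chi_{u_0} + \tfrac{1}{\varepsilon}\int_0^t \me^{-(t-s)/\varepsilon}\mathcal{S}(t,s)\chi_{v(s)}\,\dd s.
\]
The two ingredients I will invoke are the $L^1$-boundedness of $\mathcal{S}(t,s)$ from Proposition \ref{oper}(i) with $p=1$, and the equilibrium identity $\int_\R|\chi_\alpha - \chi_\beta|\,\dd\xi = |\alpha-\beta|$ (recorded in Remark \ref{chi}). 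Combining these shows $\mathscr{K}$ maps $\mathscr{H}$ to itself, and moreover
\[
\|\mathscr{K}g_1 - \mathscr{K}g_2\|_{\mathscr{H}} \leq C\bigl(1-\me^{-T/\varepsilon}\bigr)\|g_1-g_2\|_{\mathscr{H}},
\]
using the fact that $|v_1(s) - v_2(s)| \leq \|g_1(s)-g_2(s)\|_{L^1_\xi}$ pointwise in $x$.

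The technical delicacy—and the main obstacle—is that the contraction constant $C(1-\me^{-T/\varepsilon})$ is not automatically less than one for arbitrary $T$. I handle this by choosing $\tilde T$ small enough that $C(1-\me^{-\tilde T/\varepsilon}) < 1$, applying the Banach fixed point theorem on $[0,\tilde T]$, then restarting the argument on $[\tilde T, 2\tilde T]$ with new initial condition $f^\varepsilon(\tilde T)$, and so on; since $\tilde T$ depends only on $\varepsilon$ and the constant $C$ (not on the initial datum), finitely many iterations cover $[0,T]$. The resulting $f^\varepsilon$ satisfies \eqref{eq:hsol} and hence $F^\varepsilon = f^\varepsilon + \ind_{0>\xi}$ satisfies \eqref{eq:sol} and solves \eqref{eq:bgk2} in the sense of distributions. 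Uniqueness for the original problem follows because any two weak solutions differ by $f^\varepsilon$ in $\mathscr{H}$, and the contraction argument forces them to coincide on each sub-interval.
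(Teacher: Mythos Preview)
Your proposal is correct and follows essentially the same route as the paper: Duhamel along the flow, passage to $f^\varepsilon=F^\varepsilon-\ind_{0>\xi}$ via Lemma \ref{indik} and Corollary \ref{cor:weaksol}, then a Banach fixed point in $\mathscr{H}=L^\infty(0,T;L^1(\R^N\times\R))$ using Proposition \ref{oper}(i) and the identity $\int_\R|\chi_\alpha-\chi_\beta|\,\dd\xi=|\alpha-\beta|$, with the small-time contraction iterated over subintervals. One cosmetic remark: that identity is not actually recorded in Remark \ref{chi} (which only gives the case $\beta=0$); the paper states it directly in the proof, so you should too.
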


\subsection{Convergence of the BGK model}
\label{sec:conv}

In this final section we establish another weak formulation of the BGK model \eqref{eq:bgk} that actually includes the stochastic integral and is therefore better suited for proving the convergence to \eqref{eq:weakkinformul}. To be more precise, we prove the following result, which will complete the proof of Theorem \ref{thm:main}.

\begin{thm}\label{thm:bgkconvergence}
Let $f^\varepsilon=F^\varepsilon-\ind_{0>\xi}$. Then there exists $u$ which is a kinetic solution to the conservation law \eqref{eq} and, in addition, $(f^\varepsilon)$ converges weak-star in $L^\infty(\Omega\times[0,T]\times\mr^N\times \mr)$ to the equilibrium function $\chi_u$.
\end{thm}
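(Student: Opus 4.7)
The plan is to pass to the limit $\varepsilon\to 0$ in the BGK approximation, identify the limit $F$ as a generalized kinetic solution to \eqref{eq}, and invoke the Reduction Theorem \ref{thm:reduction} to extract a kinetic solution $u$ with $F=\ind_{u>\xi}$.

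First I would collect uniform bounds on the BGK solutions. From the Duhamel formula \eqref{eq:sol}, the $L^p$-estimates for $\mathcal{S}$ in Proposition \ref{oper}, and the fixed-point construction of Theorem \ref{duhamel}, one obtains $F^\varepsilon\in[0,1]$ a.e., the integrability of $f^\varepsilon$, and moment bounds
\[
\E\sup_{0\le t\le T}\|u^\varepsilon(t)\|_{L^1_x}^2+\E\Big|\int_0^T\|u^\varepsilon(t)\|_{L^2_x}^2\,\dd t\Big|^2\le C,
\]
uniform in $\varepsilon$, obtained by adapting the stochastic BGK estimates of \cite{bgk} and exploiting that the characteristic flows $\varphi,\pi$ are volume preserving. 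I would then introduce the BGK defect measure
\[
m^\varepsilon(t,x,\xi):=\frac{1}{\varepsilon}\int_\xi^{\infty}\big(F^\varepsilon(t,x,\zeta)-\ind_{u^\varepsilon(t,x)>\zeta}\big)\,\dd\zeta,
\]
which is nonnegative by the standard sign property of BGK (a consequence of $F^\varepsilon\in[0,1]$ and the equilibrium structure of $\ind_{u^\varepsilon>\xi}$) and has uniformly bounded expected total mass on $[0,T]\times\R^N\times\R$.

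Next, Lemma \ref{kinetcomp} furnishes a subsequence along which $F^\varepsilon\rightharpoonup F$ weak-$*$ in $L^\infty$, where $F$ is a kinetic function whose Young measure $\nu$ satisfies \eqref{integrov}, and $m^\varepsilon$ converges weakly to a nonnegative finite Borel measure $m$ that is easily checked to be a kinetic measure in the sense of Definition \ref{def:kinmeasure}. The core analytic step is to establish that $(F^\varepsilon,m^\varepsilon)$ itself satisfies the kinetic formulation \eqref{eq:weakkinformul}. I would imitate the formal derivation in \eqref{hu}, starting from the weak rough equation \eqref{eq:bgk2} driven by the characteristic flow $\varphi$ and rewriting it in terms of the flow $\pi$ of \eqref{eq:fl1}. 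Because $\varphi$ differs from $\pi$ by the Stratonovich drift $g\,\dd\mathbf{w}$ and the correction $-\tfrac14\partial_\xi G^2\,\dd t$, this change of flow generates precisely the It\^o stochastic integral $-\partial_\xi F^\varepsilon\,g\,\dd W$ and the parabolic term $\tfrac12\partial_\xi(G^2\partial_\xi F^\varepsilon)\,\dd t$; the rough-driver contributions cancel by $\diver a-\partial_\xi b=0$, and the BGK source term $\varepsilon^{-1}(\ind_{u^\varepsilon>\xi}-F^\varepsilon)$ rewrites as $\partial_\xi m^\varepsilon$, yielding \eqref{eq:weakkinformul} at the $\varepsilon$-level.

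Finally, I would pass to the limit $\varepsilon\to 0$ in this kinetic formulation: the terms linear in $F^\varepsilon$ converge by weak-$*$ convergence against the test function $\phi(\theta_t)$, which remains smooth and compactly supported uniformly in time by Theorem \ref{thm:existence}; the measure term converges by weak convergence of $m^\varepsilon$; and the It\^o integral converges in probability via the stochastic dominated convergence theorem, using that $-\partial_\xi F^\varepsilon=\nu^\varepsilon$ is a probability measure and that $g$ is Lipschitz with $g(\cdot,0)=0$. The limit $(F,m)$ is therefore a generalized kinetic solution with initial datum $\ind_{u_0>\xi}$; Theorem \ref{thm:reduction} gives $F=\ind_{u>\xi}$, whence $f^\varepsilon\rightharpoonup\chi_u$ weak-$*$ along the subsequence, and the $L^1$-contraction property of Corollary \ref{cor:contraction} upgrades this to convergence of the entire sequence. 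The main obstacle will be the derivation of \eqref{eq:weakkinformul} for $(F^\varepsilon,m^\varepsilon)$: bookkeeping the Stratonovich/It\^o corrections correctly when switching between the flows $\varphi$ and $\pi$ so that the $\tfrac14$ and $\tfrac12$ coefficients match, while establishing nonnegativity and uniform mass control of the defect measure $m^\varepsilon$ in the present rough setting.
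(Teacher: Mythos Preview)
Your overall strategy matches the paper's: derive the kinetic formulation \eqref{eq:weakkinformul} at the $\varepsilon$-level (this is Proposition \ref{lemma:7}), extract weak-$*$ limits of $F^\varepsilon$ and $m^\varepsilon$, pass to the limit term by term, and apply the Reduction Theorem. Two technical points in your sketch need adjustment.

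First, you invoke Lemma \ref{kinetcomp} to obtain a subsequential limit $F$ that is automatically a kinetic function, and you write $-\partial_\xi F^\varepsilon=\nu^\varepsilon$ as if it were a probability measure. But $F^\varepsilon$ is not manifestly a kinetic function: it is built from compositions $\ind_{u^\varepsilon(s)>\xi}\circ\psi_{s,t}$, and monotonicity in $\xi$ after composing with the full flow $\psi$ (which mixes $x$ and $\xi$) is not immediate. The paper sidesteps this by using Banach--Alaoglu on the bound $F^\varepsilon\in[0,1]$ to get a weak-$*$ limit $F\in L^\infty$, and then separately shows $\ind_{u^\varepsilon>\xi}-F^\varepsilon\rightharpoonup 0$ (by multiplying the formulation by $\varepsilon$), so that Lemma \ref{kinetcomp} applied to the genuine equilibria $\nu^\varepsilon=\delta_{u^\varepsilon}$ identifies $-\partial_\xi F$ with the limiting Young measure.

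Second, for the It\^o term you appeal to the stochastic dominated convergence theorem, but you only have weak-$*$ convergence of $F^\varepsilon$, not pointwise or $L^2$-strong convergence of the integrand; dominated convergence is therefore unavailable. The correct argument is the one the paper uses: the integrand $t\mapsto\langle F^\varepsilon(t),\partial_\xi(g\phi(\theta_t))\rangle\alpha(t)$ converges \emph{weakly} in $L^2(\Omega\times[0,T])$, and the It\^o map $\Phi\mapsto\int_0^T\Phi\,\dd W$ is a bounded linear operator $L^2(\Omega\times[0,T])\to L^2(\Omega)$, hence weakly continuous. Finally, your ``change of flow from $\varphi$ to $\pi$'' heuristic is correct in spirit, but the paper makes it rigorous by approximating $\mathbf z$ by smooth paths, applying the It\^o formula at the SDE level where the cancellation $\diver a-\partial_\xi b=0$ is legitimate, and invoking the convergence results of \cite{DOR15} to pass to the rough limit; a direct algebraic manipulation at the RDE level is not justified.
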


We start with an auxiliary result.

\begin{prop}\label{lemma:7}
$F^\varepsilon\in L^\infty(\Omega\times[0,T]\times\mr^N\times\mr)$ satisfies the following weak formulation of \eqref{eq:bgk}: let $\phi\in C^2_c(\mr^N\times\mr),$ then it holds true a.s.
\begin{align}\label{eq:bgkweak}
\dif \langle F^\varepsilon,\phi(\theta_t)\rangle&=-\langle \partial_\xi F^\varepsilon g,\phi(\theta_t)\rangle\,\dif W+\frac{1}{2}\langle\partial_\xi(G^2\partial_\xi F^\varepsilon),\phi(\theta_t)\rangle\,\dif t+\frac{1}{\varepsilon}\langle\ind_{u^\varepsilon>\xi}-F^\varepsilon,\phi(\theta_t)\rangle\,\dd t.
\end{align}
Moreover, it is progressively measurable.

\begin{proof}
In order to verify \eqref{eq:bgkweak}, we test \eqref{eq:sol} by $\phi(\theta_{0,t})$ and obtain
\begin{align}\label{eq:5a}
\big\langle F^\varepsilon(t),\phi(\theta_{0,t})\big\rangle=\me^{-\frac{t}{\varepsilon}}\big\langle\mathcal{S}(t,0)F_0^\varepsilon,\phi(\theta_{0,t})\big\rangle+\frac{1}{\varepsilon}\int_0^t\me^{-\frac{t-s}{\varepsilon}}\big\langle\mathcal{S}(t,s)\ind_{u^\varepsilon(s)>\xi},\phi(\theta_{0,s}(\theta_{s,t}))\big\rangle\,\dif s.
\end{align}
According to Duhamel's principle, it is enough to verify that if $X_0\in L^\infty(\mr^N\times\mr)$ then
$$\big(X(t):=\mathcal{S}(t,s)X_0,\phi(\theta_{0,s}(\theta_{s,t}))\big)$$
solves
\begin{align}\label{eq:5}
\begin{aligned}
\dif\big\langle X(t),\phi(\theta_{0,s}(\theta_{s,t}))\big\rangle&=-\big\langle \partial_\xi X g,\phi(\theta_{0,s}(\theta_{s,t}))\big\rangle\,\dif W+\frac{1}{2}\big\langle\partial_\xi(G^2\partial_\xi X),\phi(\theta_{0,s}(\theta_{s,t}))\big\rangle\,\dif t,\\
\langle X(s),\phi(\theta_{0,s})\rangle&=\langle X_0,\phi(\theta_{0,s})\rangle.
\end{aligned}
\end{align}
Indeed, let us calculate the stochastic differential of $\langle F^\varepsilon(t),\phi(\theta_{0,t})\rangle$ given by the right hand side of \eqref{eq:5a}. Since according to the It\^o formula applied to a product
\begin{align*}
\me^{-\frac{t-s}{\varepsilon}}\big\langle X(t),\phi(\theta_{0,s}(\theta_{s,t}))\big\rangle&=\big\langle X_0,\phi(\theta_{0,s})\big\rangle-\frac{1}{\varepsilon}\int_s^t\me^{-\frac{r-s}{\varepsilon}}\big\langle X(r),\phi(\theta_{0,s}(\theta_{s,r}))\big\rangle\dd r\\
&\quad+\int_s^t\me^{-\frac{r-s}{\varepsilon}}\dd\big\langle X(r),\phi(\theta_{0,s}(\theta_{s,r}))\big\rangle,
\end{align*}
it follows due to \eqref{eq:5}
\begin{align*}
\langle F^\varepsilon(t),\phi(\theta_{0,t})\rangle&=\langle F^\varepsilon_0,\phi\rangle-\frac{1}{\varepsilon}\int_0^t\me^{-\frac{r}{\varepsilon}}\big\langle \mathcal{S}(r,0)F_0^\varepsilon,\phi(\theta_{0,r})\big\rangle\dd r\\
&\quad-\int_0^t\me^{-\frac{r}{\varepsilon}}\big\langle \partial_\xi [\mathcal{S}(r,0)F_0^\varepsilon] g,\phi(\theta_{0,r})\big\rangle\,\dif W_r+\frac{1}{2}\int_0^t\me^{-\frac{r}{\varepsilon}}\big\langle\partial_\xi(G^2\partial_\xi [\mathcal{S}(r,0)F_0^\varepsilon]),\phi(\theta_{0,r})\big\rangle\,\dif r\\
&\quad+\frac{1}{\varepsilon}\int_0^t\bigg[\langle\ind_{u^\varepsilon(s)>\xi},\phi(\theta_{0,s})\rangle-\frac{1}{\varepsilon}\int_s^t\me^{-\frac{r-s}{\varepsilon}}\big\langle\mathcal{S}({r,s})\ind_{u^\varepsilon(s)>\xi},\phi(\theta_{0,s}(\theta_{s,r}))\big\rangle\dd r\\
&\hspace{2.5cm}-\int_s^t\me^{-\frac{r-s}{\varepsilon}}\big\langle \partial_\xi [\mathcal{S}(r,s)\ind_{u^\varepsilon(s)>\xi}] g,\phi(\theta_{0,s}(\theta_{s,r}))\big\rangle\,\dif W_r\\
&\hspace{2.5cm}+\frac{1}{2}\int_s^t\me^{-\frac{r-s}{\varepsilon}}\big\langle\partial_\xi(G^2\partial_\xi [\mathcal{S}(r,s)\ind_{u^\varepsilon(s)>\xi}]),\phi(\theta_{0,s}(\theta_{s,r}))\big\rangle\,\dif r\bigg]\dd s\\
&=\langle F^\varepsilon_0,\phi\rangle-\int_0^t\big\langle \partial_\xi F^\varepsilon(r) g,\phi(\theta_{0,r})\big\rangle\,\dif W_r+\frac{1}{2}\int_0^t\big\langle\partial_\xi(G^2\partial_\xi F^\varepsilon(r)),\phi(\theta_{0,r})\big\rangle\,\dif r\\
&\quad -\frac{1}{\varepsilon}\int_0^t\langle F^\varepsilon(s),\phi(\theta_{0,s})\rangle\dd s+\frac{1}{\varepsilon}\int_0^t\langle\ind_{u^\varepsilon(s)>\xi},\phi(\theta_{0,s})\rangle\dd s,
\end{align*}
where the last equality is a consequence of deterministic and stochastic Fubini's theorem and \eqref{eq:sol}. Hence \eqref{eq:bgkweak} is satisfied.

Let us now justify \eqref{eq:5}. It was shown in \cite[Theorem 8]{DOR15} that the RDE solution to the characteristic system \eqref{eq:char} corresponding to the joint lift $\mathbf{\Lambda}$ constructed in \eqref{jointlift} can be obtained as limit of SDE solutions to
\begin{align}\label{eq:sdechar}
\begin{aligned}
\dif\varphi^{n,0}_t&=g(\varphi^{n}_t)\circ\dif W-\frac{1}{4}\partial_\xi G^2(\varphi_t^n)\,\dif t-b(\varphi^n_t)\,\dif z^n,\\
\dif\varphi^{n,x}_t&=a(\varphi^n_t)\,\dif z^n,
\end{aligned}
\end{align}
where the corresponding lifts $(\mathbf{z}^n)$ approximate $\mathbf{z}$ as in Definition \ref{def:solutionrde}.
Let $\psi^n$ and $\theta^n$, respectively, denote the inverse flows corresponding to \eqref{eq:sdechar} and
\begin{align}\label{bbb}
\begin{aligned}
\dif\pi^{n,0}_t&=-b(\pi^n_t)\,\dif z^n,\\
\dif\pi^{n,x}_t&=a(\pi^n_t)\,\dif z^n,
\end{aligned}
\end{align}
respectively. Let $X_0\in C^1_b(\mr^N\times\mr)$ and $\phi\in C^2_c(\mr^N\times\mr)$. Then setting $X^n(t)=X_0(\psi^n_{s,t})$ yields a solution to the stochastic transport equation corresponding to the characteristic system \eqref{eq:sdechar}, which starts at time $s$ from $X_0$. Besides, $\phi(\theta^n_{0,s}(\theta^n_{s,t}))$ yields a solution to the transport equation corresponding to the characteristic system \eqref{bbb}, which starts at time $s$ from $\phi(\theta^n_{0,s})$. Therefore, applying the It\^o formula to their product and integrating with respect to $(x,\xi)$, we observe that the integrals driven by $z^n$ cancel due to the fact that $\diver a-\partial_\xi b=0$ (cf. a similar discussion in Subsection \ref{subsec:smoothdrivers}) and we obtain
\begin{align*}
\begin{aligned}
\dif\big\langle X^n(t),\phi(\theta^n_{0,s}(\theta^n_{s,t}))\big\rangle&=-\big\langle \partial_\xi X^n g,\phi(\theta^n_{0,s}(\theta^n_{s,t}))\big\rangle\,\dif W+\frac{1}{2}\big\langle\partial_\xi(G^2\partial_\xi X^n),\phi(\theta^n_{0,s}(\theta^n_{s,t}))\big\rangle\,\dif t,\\
\langle X(s),\phi(\theta_{0,s})\rangle&=\langle X_0,\phi(\theta^n_{0,s})\rangle.
\end{aligned}
\end{align*}
As mentioned above, due to \cite[Theorem 8]{DOR15}, $\psi^n_{s,t}(x,\xi)\rightarrow\psi_{s,t}(x,\xi)$ uniformly in $t\in[s,T]$ in probability, where $\psi$ is the RDE inverse flow to \eqref{eq:char}. Due to  invariance of the $\mathrm{Lip}^\gamma$-norm under translation, we deduce (for a subsequence) that $\psi^n\rightarrow\psi$ uniformly in $t\in[0,T]$ and $(x,\xi)\in\mr^N\times\mr$ a.s. 
Indeed, since $\varphi_{s,t}^n(x,\xi)$ is the SDE solution to \eqref{eq:sdechar} starting from $(x,\xi)$ at time $s$, $\tilde\varphi_{s,t}^n:=\varphi_{s,t}^n(x,\xi)-(x,\xi)$ solves
\begin{align*}
\begin{aligned}
\dif\tilde\varphi^{n,0}_t&=g\big((x,\xi)+\tilde\varphi^{n}_t\big)\circ\dif W-\frac{1}{4}\partial_\xi G^2\big((x,\xi)+\tilde\varphi_t^n\big)\,\dif t-b\big((x,\xi)+\tilde\varphi^n_t\big)\,\dif z^n,\\
\dif\tilde\varphi^{n,x}_t&=a\big((x,\xi)+\tilde\varphi^n_t\big)\,\dif z^n,\\
\tilde\varphi^n(s)&=(0,0),
\end{aligned}
\end{align*}
and similarly $\tilde\varphi_{s,t}:=\varphi_{s,t}(x,\xi)-(x,\xi)$ solves
\begin{equation*}
\begin{split}
\dif\tilde\varphi^0_t&= -b\big((x,\xi)+\tilde\varphi_t\big)\,\dif \mathbf{z}+g\big((x,\xi)+\tilde\varphi_t\big)\,\dif \mathbf{w}-\frac{1}{4}\partial_\xi G^2\big((x,\xi)+\tilde\varphi_t\big)\,\dif t,\\
\dif\tilde\varphi^x_t&=a\big((x,\xi)+\tilde\varphi_t\big)\,\dif \mathbf{z},\\
\tilde\varphi(s)&=(0,0).
\end{split}
\end{equation*}
Since for a family of vector fields $V=(V_1,\dots,V_d)$ on $\R^e$ it holds true that
$$\|V(y+\cdot)\|_{\text{Lip}^\gamma}=\|V(\cdot)\|_{\text{Lip}^\gamma},$$
\cite[Theorem 8]{DOR15} yields the convergence $\tilde\varphi^n\to\tilde\varphi$ uniformly in $t\in[s,T]$ in probability which is independent of $(x,\xi)$. Therefore $\varphi^n\to\varphi$ uniformly in $t\in[s,T]$ and $(x,\xi)\in\R^N\times\R$ in probability and as a consequence, along a subsequence, $\varphi^n\to\varphi$ uniformly in $t\in[s,T]$ and $(x,\xi)\in\R^N\times\R$ a.s. Finally, according to Theorem \ref{thm:existence}
\begin{align*}
\sup_{t,x,\xi}|\psi^n_{s,t}(x,\xi)-\psi_{s,t}(x,\xi)|&=\sup_{t,x,\xi}|\psi^n_{s,t}(\varphi^n_{s,t}(x,\xi))-\psi_{s,t}(\varphi^n_{s,t}(x,\xi))|\\
&=\sup_{t,x,\xi}|(x,\xi)-\psi_{s,t}(\varphi(x,\xi)+\varphi^n_{s,t}(x,\xi)-\varphi(x,\xi))|\\
&=\sup_{t,x,\xi}|\psi_{s,t}(\varphi_{s,t}(x,\xi))-\psi_{s,t}(\varphi(x,\xi)+\varphi^n_{s,t}(x,\xi)-\varphi(x,\xi))|\\
&\leq \sup_{t,x,\xi}|\totdif \psi_{s,t}|\sup_{t,x,\xi}|\varphi^n_{s,t}(x,\xi)-\varphi_{s,t}(x,\xi)|\\
&\leq C\sup_{t,x,\xi}|\varphi^n_{s,t}(x,\xi)-\varphi_{s,t}(x,\xi)|\rightarrow 0\quad\text{a.s.}
\end{align*}
and the claim follows.

Moreover, $\theta^n\rightarrow\theta$ uniformly in $s,\,t\in[0,T]$, $(x,\xi)\in\mr^N\times\mr$ (see \cite[Theorem 4]{caruana}) and the same holds true for their first and second order derivatives with respect to $\xi$. Therefore, we may pass to the limit, apply dominated convergence theorem (for both Lebesgue and stochastic integral, see \cite[Theorem 32]{protter} for the latter one) and \eqref{eq:5} follows. If $X_0\in L^\infty(\mr^N\times\mr)$ then we consider its smooth approximation $X^\delta_0$, apply the previous result and pass to the limit.

The progressive measurability follows immediately from the construction.
\end{proof}
\end{prop}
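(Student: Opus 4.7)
The plan is to start from the Duhamel representation \eqref{eq:sol} for $F^\varepsilon$, test it against $\phi(\theta_{0,t})$, and then differentiate in $t$ using an It\^o product rule, taking care of the fact that $\theta_{0,t}$ is driven by the rough path $\mathbf{\Lambda}$. The key intermediate identity that does the real work is: for $X_0\in L^\infty(\mr^N\times\mr)$, $X(t):=\mathcal{S}(t,s)X_0$ and the test function $\phi(\theta_{0,s}(\theta_{s,t}))$, the pairing $\langle X(t),\phi(\theta_{0,s}(\theta_{s,t}))\rangle$ is a semimartingale satisfying
\begin{equation*}
\dif \bigl\langle X(t),\phi(\theta_{0,s}(\theta_{s,t}))\bigr\rangle=-\bigl\langle\partial_\xi X\, g,\phi(\theta_{0,s}(\theta_{s,t}))\bigr\rangle\dif W+\tfrac{1}{2}\bigl\langle\partial_\xi(G^2\partial_\xi X),\phi(\theta_{0,s}(\theta_{s,t}))\bigr\rangle\dif t,
\end{equation*}
i.e.\ \eqref{eq:5}. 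Once this is in hand, the full formula \eqref{eq:bgkweak} is obtained by applying the ordinary It\^o product rule to the exponential factor in \eqref{eq:sol} (the $\me^{-t/\varepsilon}$ and the integrand $\me^{-(t-s)/\varepsilon}\mathcal{S}(t,s)\ind_{u^\varepsilon(s)>\xi}$), followed by a deterministic and stochastic Fubini interchange.

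To prove the intermediate identity \eqref{eq:5}, I would use the approximation of the joint lift $\mathbf{\Lambda}$ by smooth lifts $\mathbf{\Lambda}^n$ (equivalently Lipschitz paths $z^n$, with $W$ kept fixed and Stratonovich-lifted). For fixed $n$, let $\varphi^n,\psi^n$ be the SDE flow \eqref{eq:sdechar} and its inverse, and $\pi^n,\theta^n$ the ODE flow \eqref{bbb} and its inverse. Set $X^n(t):=X_0^\delta(\psi^n_{s,t})$ for a smooth mollification $X_0^\delta$; then $X^n$ solves the classical Stratonovich SPDE corresponding to \eqref{eq:aux} with smooth driver $z^n$, while $\phi(\theta^n_{0,s}(\theta^n_{s,t}))$ solves a classical transport equation along the $z^n$-driven flow. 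Applying the ordinary It\^o formula to the product and integrating in $(x,\xi)$, the two rough contributions are of the form
\begin{equation*}
-\int_s^t\bigl\langle \nabla[X^n\phi(\theta^n)]\cdot a-\partial_\xi[X^n\phi(\theta^n)]\, b,\,1\bigr\rangle\,\dif z^n=\int_s^t\bigl\langle X^n\phi(\theta^n),\diver a-\partial_\xi b\bigr\rangle\,\dif z^n=0,
\end{equation*}
by the divergence-free structure $\diver a-\partial_\xi b=0$ already used in Subsection \ref{subsec:smoothdrivers}. The surviving terms are the Wiener integral and the $\tfrac12\partial_\xi(G^2\partial_\xi\cdot)$ correction.

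The main obstacle is the passage to the limit $n\to\infty$ (and then $\delta\to 0$). Here I would combine \cite[Theorem 8]{DOR15}, which gives uniform convergence $\varphi^n\to\varphi$ on compact time intervals in probability, with the translation invariance of the $\mathrm{Lip}^\gamma$-norm to upgrade this to \emph{uniform} convergence on $[s,T]\times(\R^N\times\R)$ almost surely along a subsequence; the standard identity
\begin{equation*}
\psi^n_{s,t}(x,\xi)-\psi_{s,t}(x,\xi)=\psi_{s,t}(\varphi_{s,t}(x,\xi))-\psi_{s,t}\bigl(\varphi(x,\xi)+\varphi^n_{s,t}(x,\xi)-\varphi(x,\xi)\bigr)
\end{equation*}
together with the uniform bound $\sup|\totdif\psi_{s,t}|\le C$ from Theorem \ref{thm:existence}(ii) then transfers convergence to the inverse flows $\psi^n\to\psi$. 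The analogous convergence for $\theta^n\to\theta$ and its first and second $\xi$-derivatives follows from \cite[Theorem 4]{caruana} and Theorem \ref{thm:existence}(ii). These uniform bounds and the compact support of $\phi$ justify the dominated convergence in the Lebesgue integrals and, via \cite[Theorem 32]{protter}, in the It\^o integral. Sending $\delta\to 0$ is a weak-$*$ argument using the uniform bound on $X^\delta_0$ and continuity of $\psi_{s,t}$.

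Finally, progressive measurability of $F^\varepsilon$ follows from its construction as the fixed point of $\mathscr{K}$ in Theorem \ref{duhamel}. For each fixed realization $\omega$ the operator $\mathcal{S}(t,s)$ depends only on $\mathbf{\Lambda}|_{[0,t]}$, hence on $(z,W)|_{[0,t]}$; iterating $\mathscr{K}$ starting from the progressively measurable datum $\chi_{u_0}$ preserves progressive measurability at each step and, since $\mathscr{K}$ is a contraction on $\mathscr{H}$, the limit inherits progressive measurability. Combined with $F^\varepsilon=f^\varepsilon+\ind_{0>\xi}$, this gives the final statement.
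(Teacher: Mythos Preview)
Your proposal is correct and follows essentially the same route as the paper: test the Duhamel formula against $\phi(\theta_{0,t})$, reduce via the product rule to the key semimartingale identity \eqref{eq:5} for $\langle \mathcal{S}(t,s)X_0,\phi(\theta_{0,s}(\theta_{s,t}))\rangle$, and prove \eqref{eq:5} by approximating $\mathbf{z}$ with smooth $z^n$, exploiting the cancellation from $\diver a-\partial_\xi b=0$, and passing to the limit via \cite[Theorem 8]{DOR15}, translation invariance of the $\mathrm{Lip}^\gamma$-norm, and Theorem~\ref{thm:existence}. The only cosmetic difference is that you spell out the progressive measurability argument through the contraction iterates of $\mathscr{K}$, whereas the paper simply notes it follows from the construction.
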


As the next step we prove a stochastic version of Proposition \ref{oper}(i) for $p=1$.

\begin{lemma}\label{prop:update}
The family $\mathcal{S}=\{\mathcal{S}(t,s),\,0\leq s\leq t\leq T\}$ consists of bounded linear operators on $L^1(\Omega\times\mr^N\times\mr)$. In particular, if $X_0\in L^1(\Omega\times\mr^N\times\mr)$ then 
\begin{equation}\label{eq:6}
\sup_{0\leq s\leq T}\E\sup_{s\leq t\leq T}\|\mathcal{S}(t,s)X_0\|_{L^1_{x,\xi}}\leq C \,\stred\|X_0\|_{L^1_{x,\xi}}.
\end{equation}

\begin{proof}
Assume in addition that $X_0$ is nonnegative, bounded and compactly supported in $(x,\xi)$. In general, \eqref{eq:5} holds true for all $\phi\in C^1_c(\mr^N\times\mr)$, nevertheless, since $X\in L^1(\mr^N\times\mr)$ a.s., which follows from Proposition \ref{oper}, the assumption on the test function $\phi$ can be relaxed and we may take $\phi\equiv 1$. Taking expectation, the stochastic integral vanishes due to the additional assumption and we obtain
\begin{equation}\label{eq:7}
\E\|\mathcal{S}(t,s)X_0\|_{L^1_{x,\xi}}\leq \stred\|X_0\|_{L^1_{x,\xi}}.
\end{equation}
Besides, the nonnegativity assumption can be immediately omitted by splitting $X_0$ into negative and positive part.
In the general case of $X_0\in L^1(\Omega\times\mr^N\times\mr)$, we approximate $X_0$ in $L^1(\Omega\times\mr^N\times\mr)$ by $X_0^\delta$ bounded and compactly supported such that 
$$\E\|X_0^\delta\|_{L^1_{x,\xi}}\leq \stred\|X_0\|_{L^1_{x,\xi}}.$$
Apply \eqref{eq:7} to $X_0^\delta$. Due to linearity of $\mathcal{S}(t,s)$ it implies that $\mathcal{S}(t,s)X_0^\delta$ is Cauchy in $L^1(\Omega\times\mr^N\times\mr)$, the limit is necessarily $\mathcal{S}(t,s)X_0$ so Fatou's lemma yields \eqref{eq:7} and the first claim follows.

To obtain, \eqref{eq:6}, we test \eqref{eq:5} again by $\phi\equiv 1$, take supremum and expectation. Applying Burkholder-Davis-Gundy's and weighted Young's inequalities and \eqref{eq:7} we obtain
\begin{align*}
\E\sup_{s\leq t\leq T}\|\mathcal{S}(t,s)X_0\|_{L^1_{x,\xi}}&\leq \E\|X_0\|_{L^1_{x,\xi}}+\E\sup_{s\leq t\leq T}\bigg|\int_s^t\langle X,\partial_\xi g\rangle\,\dif W\bigg|\\
&\leq\E\|X_0\|_{L^1_{x,\xi}}+C\E\bigg(\int_s^T\|X\|_{L^1_{x,\xi}}^2\,\dif t\bigg)^{1/2}\\
&\leq \E\|X_0\|_{L^1_{x,\xi}}+C\E\bigg(\sup_{s\leq t\leq T}\|X\|_{L^1_{x,\xi}}\bigg)^{1/2}\bigg(\int_s^T\|X\|_{L^1_{x,\xi}}\,\dif t\bigg)^{1/2}\\
&\leq \E\|X_0\|_{L^1_{x,\xi}}+\frac{1}{2}\E\sup_{s\leq t\leq T}\|X\|_{L^1_{x,\xi}}+C\,\E\int_s^T\|X\|_{L^1_{x,\xi}}\,\dif t\\
&\leq C\,\E\|X_0\|_{L^1_{x,\xi}}+\frac{1}{2}\E\sup_{s\leq t\leq T}\|\mathcal{S}(t,s)X_0\|_{L^1_{x,\xi}}
\end{align*}
and the proof is complete.
\end{proof}
\end{lemma}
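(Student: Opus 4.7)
The proof splits naturally into two parts: the pointwise-in-time $L^1$ estimate $\E\|\mathcal{S}(t,s)X_0\|_{L^1_{x,\xi}}\leq C\E\|X_0\|_{L^1_{x,\xi}}$, and the estimate with $\sup_{s\leq t\leq T}$ inside the expectation. Note that Proposition \ref{oper}(i) already gives a pathwise bound, but the constant there depends on bounds for the Jacobian of $\psi_{s,t}$ which in turn depend on the (random) rough-path norm of $\mathbf{\Lambda}(\omega)$; one cannot simply take expectation. Instead, one must exploit the martingale structure available through the weak formulation \eqref{eq:5} established in Proposition \ref{lemma:7}.

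The plan is as follows. First I would reduce to the case where $X_0$ is nonnegative, bounded and compactly supported in $(x,\xi)$. For such $X_0$, Proposition \ref{oper}(i) (applied pathwise) gives $X(t):=\mathcal{S}(t,s)X_0\in L^1(\R^N\times\R)$ a.s., and $X(t)\geq 0$ since $X(t,x,\xi)=X_0(\psi_{s,t}(x,\xi))$. Next I would take $\phi\equiv 1$ in \eqref{eq:5}: since \eqref{eq:5} is stated for $\phi\in C^1_c$, this requires a standard truncation argument using a cutoff $\phi_R\in C^1_c$ with $\phi_R\uparrow 1$, combined with the pathwise $L^1$ control on $X$ and the boundedness of $g,\partial_\xi g,G^2,\partial_\xi G^2$ to pass to the limit (the drift term $\tfrac{1}{2}\langle\partial_\xi(G^2\partial_\xi X),1\rangle$ vanishes by integration by parts, while $\langle\partial_\xi X g,1\rangle = -\langle X,\partial_\xi g\rangle$). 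This yields
\begin{equation*}
\|X(t)\|_{L^1_{x,\xi}}=\int X(t)\,\dd x\,\dd\xi=\|X_0\|_{L^1_{x,\xi}}+\int_s^t\langle X(r),\partial_\xi g\rangle\,\dd W.
\end{equation*}
Since $X_0$ is bounded and compactly supported, combined with the pathwise bound from Proposition \ref{oper}(i), one verifies that the stochastic integrand is in $L^2(\Omega\times[s,T])$, hence the stochastic integral is a true martingale. Taking expectation gives $\E\|X(t)\|_{L^1_{x,\xi}}=\E\|X_0\|_{L^1_{x,\xi}}$. Removing the nonnegativity assumption by splitting $X_0=X_0^+-X_0^-$ and using linearity of $\mathcal{S}(t,s)$ yields the inequality \eqref{eq:7}. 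For general $X_0\in L^1(\Omega\times\R^N\times\R)$, I would approximate by bounded compactly supported $X_0^\delta$ with $\E\|X_0^\delta\|_{L^1}\leq\E\|X_0\|_{L^1}$; linearity of $\mathcal{S}(t,s)$ together with \eqref{eq:7} shows $(\mathcal{S}(t,s)X_0^\delta)$ is Cauchy in $L^1(\Omega\times\R^N\times\R)$ and its limit must coincide a.s.\ with $\mathcal{S}(t,s)X_0$; Fatou then transfers \eqref{eq:7} to the general case.

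For the estimate with $\sup_{s\leq t\leq T}$ inside the expectation, I would take the supremum of the identity displayed above and apply the Burkholder–Davis–Gundy inequality to the martingale, giving (since $|\langle X(r),\partial_\xi g\rangle|\leq\|\partial_\xi g\|_\infty\|X(r)\|_{L^1_{x,\xi}}$)
\begin{equation*}
\E\sup_{s\leq t\leq T}\|X(t)\|_{L^1_{x,\xi}}\leq\E\|X_0\|_{L^1_{x,\xi}}+C\,\E\Bigl(\int_s^T\|X(r)\|_{L^1_{x,\xi}}^2\,\dd r\Bigr)^{1/2}.
\end{equation*}
Bounding one factor by $\sup_{s\leq t\leq T}\|X(t)\|_{L^1_{x,\xi}}^{1/2}$ and the other by $(\int_s^T\|X(r)\|_{L^1_{x,\xi}}\,\dd r)^{1/2}$, then applying weighted Young's inequality $ab\leq\tfrac{1}{2}a^2+\tfrac{1}{2}b^2$ absorbs $\tfrac12\E\sup_{s\leq t\leq T}\|X(t)\|_{L^1}$ into the left-hand side, leaving a term $C\E\int_s^T\|X(r)\|_{L^1_{x,\xi}}\,\dd r\leq C(T-s)\E\|X_0\|_{L^1_{x,\xi}}$ by \eqref{eq:7}. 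This completes \eqref{eq:6}.

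The main obstacle is the first step, namely justifying that \eqref{eq:5} may be tested against $\phi\equiv 1$. Because the weak formulation is only formulated for compactly supported $\phi$, and because the pathwise $L^1$ bound from Proposition \ref{oper}(i) has a random constant, the cutoff limit must be controlled in terms of $\|X_0\|_{L^1_{x,\xi}}$ uniformly in $R$, which is exactly what the pathwise estimate provides. Once this is settled, the remainder is a standard BDG/Young's inequality argument and an approximation by nice initial data.
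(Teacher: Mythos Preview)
Your proposal is correct and follows essentially the same approach as the paper's proof: reduce to nonnegative, bounded, compactly supported $X_0$, test \eqref{eq:5} with $\phi\equiv 1$ (justified via the pathwise $L^1$ bound from Proposition~\ref{oper}), take expectation to kill the stochastic integral and obtain \eqref{eq:7}, extend to general $X_0$ by approximation/Fatou, and then derive \eqref{eq:6} via BDG, the factorization trick, and weighted Young's inequality. You are in fact more explicit than the paper about the truncation argument and about why the stochastic integral is a true martingale.
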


\begin{cor}\label{prop:update1}
The family $\mathcal{S}=\{\mathcal{S}(t,s),\,0\leq s\leq t\leq T\}$ consists of bounded linear operators on $L^2(\Omega;L^1(\mr^N\times\mr))$. In particular, if $X_0\in L^2(\Omega;L^1(\mr^N\times\mr))$ then 
\begin{equation}\label{eq:61}
\sup_{0\leq s\leq T}\E\sup_{s\leq t\leq T}\|\mathcal{S}(t,s)X_0\|^2_{L^1_{x,\xi}}\leq C \,\stred\|X_0\|^2_{L^1_{x,\xi}}.
\end{equation}

\begin{proof}
The proof follows the lines of Lemma \ref{prop:update}. The key observation is that if $X_0$ is nonnegative, bounded and compactly supported in $(x,\xi)$ then, on the one hand,
\begin{equation*}
\begin{split}
\E\|\mathcal{S}(t,s)X_0\|^2_{L^1_{x,\xi}}&\leq C\,\stred\|X_0\|^2_{L^1_{x,\xi}}+C\, \E\bigg|\int_s^t \langle X,\partial_\xi g\rangle\,\dif W\bigg|^2\\
&\leq C\,\stred\|X_0\|^2_{L^1_{x,\xi}}+C\, \E\int_s^t\|X(r)\|_{L^1_{x,\xi}}^2\,\dif r\\
\end{split}
\end{equation*}
and the Gronwall lemma implies
\begin{equation}\label{eq:71}
\begin{split}
\E\|\mathcal{S}(t,s)X_0\|^2_{L^1_{x,\xi}}&\leq C\,\stred\|X_0\|^2_{L^1_{x,\xi}}.
\end{split}
\end{equation}
On the other hand, Burkholder-Davis-Gundy's inequality and \eqref{eq:71} implies
\begin{align*}
\E\sup_{s\leq t\leq T}\|\mathcal{S}(t,s)X_0\|^2_{L^1_{x,\xi}}&\leq C\,\E\|X_0\|^2_{L^1_{x,\xi}}+C\,\E\sup_{s\leq t\leq T}\bigg|\int_s^t\langle X,\partial_\xi g\rangle\,\dif W\bigg|^2\\
&\leq C\,\E\|X_0\|^2_{L^1_{x,\xi}}+C\,\E\int_s^T\|X\|_{L^1_{x,\xi}}^2\,\dif t\\
&\leq C\,\E\|X_0\|^2_{L^1_{x,\xi}}.
\end{align*}
\end{proof}
\end{cor}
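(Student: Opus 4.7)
The plan is to mimic the strategy of Lemma \ref{prop:update} but adapt it to the second-moment setting; the key difference is that the clever splitting trick $\|X\|_{L^1}^2=\sup\|X\|_{L^1}\cdot\|X\|_{L^1}$ used there no longer closes the estimate directly, so instead I would first establish the non-supremum bound by Gronwall and then feed it back into a Burkholder--Davis--Gundy estimate with the supremum.

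First I would reduce to the case where $X_0$ is nonnegative, bounded and compactly supported in $(x,\xi)$. Nonnegativity can be arranged by splitting $X_0=X_0^+-X_0^-$ and using linearity of $\mathcal{S}(t,s)$. Under this additional assumption, the representation $\mathcal{S}(t,s)X_0(x,\xi)=X_0(\psi_{s,t}(x,\xi))$ from Corollary \ref{cor:weaksol} guarantees that $X(t):=\mathcal{S}(t,s)X_0$ remains nonnegative and (by the bounds on the Jacobian used in Proposition \ref{oper}) integrable in $(x,\xi)$, so that $\|X(t)\|_{L^1_{x,\xi}}=\langle X(t),1\rangle$. I would then test the weak formulation \eqref{eq:5} (which by Proposition \ref{lemma:7} is solved by $\mathcal{S}(t,s)X_0$ with $\phi(\theta_{0,s}(\theta_{s,t}))$ replaced by the admissible limit $\phi\equiv 1$; the relaxation is justified by the compact support of $X(t)$ together with the assumption $g(x,0)=0$, which ensures that all the integration-by-parts boundary terms vanish) to obtain an identity of the form
\begin{equation*}
\|X(t)\|_{L^1_{x,\xi}}=\|X_0\|_{L^1_{x,\xi}}-\int_s^t\langle \partial_\xi X\cdot g,1\rangle\,\dif W_r,
\end{equation*}
where the It\^o correction drops after integrating the $\partial_\xi(G^2\partial_\xi X)$ term by parts.

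Next I would square the above identity, take expectation and apply Burkholder--Davis--Gundy together with the linear growth of $|\partial_\xi g|$ to get
\begin{equation*}
\E\|X(t)\|_{L^1_{x,\xi}}^2\leq C\,\E\|X_0\|_{L^1_{x,\xi}}^2+C\int_s^t\E\|X(r)\|_{L^1_{x,\xi}}^2\,\dif r,
\end{equation*}
and Gronwall's lemma then yields $\E\|\mathcal{S}(t,s)X_0\|^2_{L^1_{x,\xi}}\leq C\,\E\|X_0\|^2_{L^1_{x,\xi}}$, uniformly in $s,t$. Finally, taking the supremum in $t$ inside the expectation and applying BDG once more produces
\begin{equation*}
\E\sup_{s\leq t\leq T}\|X(t)\|_{L^1_{x,\xi}}^2\leq C\,\E\|X_0\|_{L^1_{x,\xi}}^2+C\,\E\int_s^T\|X(r)\|_{L^1_{x,\xi}}^2\,\dif r,
\end{equation*}
and the last term is controlled by the previous Gronwall estimate, giving \eqref{eq:61}. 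The removal of the nonnegativity, boundedness and compact support assumptions is then a routine approximation argument in $L^2(\Omega;L^1(\mr^N\times\mr))$ based on the linearity of $\mathcal{S}(t,s)$ combined with Fatou's lemma.

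The only nontrivial point is the justification of testing \eqref{eq:5} against $\phi\equiv 1$ and the associated integration by parts; once this is in place everything proceeds by BDG and Gronwall. I do not expect genuine obstacles beyond this.
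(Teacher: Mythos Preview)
Your proposal is correct and follows essentially the same two-step strategy as the paper: first obtain the fixed-time bound $\E\|\mathcal{S}(t,s)X_0\|_{L^1_{x,\xi}}^2\leq C\,\E\|X_0\|_{L^1_{x,\xi}}^2$ via It\^o isometry/BDG and Gronwall, then feed this back into a BDG estimate for the supremum. The only minor slip is that you invoke ``linear growth of $|\partial_\xi g|$'' where what is actually used (and what the assumptions $g\in\mathrm{Lip}^{\gamma+3}$ provide) is boundedness of $\partial_\xi g$, so that $|\langle X,\partial_\xi g\rangle|\leq C\|X\|_{L^1_{x,\xi}}$.
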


\begin{proof}[Proof of Theorem \ref{thm:bgkconvergence}]
In view of Proposition \ref{lemma:7} we remark that $F^\varepsilon$ also satisfies the following weak formulation of \eqref{eq:bgk2}: let $\phi\in C_c^2(\mr^N\times\mr)$ and $\alpha\in C^1_c([0,T))$ then
\begin{equation}\label{formul}
\begin{split}
\int_0^T&\big\langle F^\varepsilon(t),\phi(\theta_t)\big\rangle\partial_t\alpha(t)\,\dif t+\big\langle F_0^\varepsilon,\phi\big\rangle\alpha(0)\\
&=\int_0^T\langle\partial_\xi F^\varepsilon(t) g,\phi(\theta_t)\rangle\alpha(t)\,\dif W(t)-\frac{1}{2}\int_0^T\langle\partial_\xi(G^2\partial_\xi F^\varepsilon(t)),\phi(\theta_t)\rangle\alpha(t)\,\dif t\\
&\hspace{1cm}-\frac{1}{\varepsilon}\int_0^T\big\langle\ind_{u^\varepsilon(t)>\xi}-F^\varepsilon(t),\phi(\theta_t)\big \rangle\alpha(t)\,\dif t
\end{split}
\end{equation}
and a similar expression holds true for $f^\varepsilon$, namely, it satisfies the weak formulation of \eqref{bgk4}.
Taking the limit on the left hand side of \eqref{formul} is quite straightforward. Indeed, according to the representation formula \eqref{eq:sol} it holds that the set $(F^\varepsilon)$ is bounded in $L^\infty(\Omega\times[0,T]\times\mr^N\times\mr)$, more precisely, $F^\varepsilon\in[0,1],\,\varepsilon\in(0,1).$ Therefore, by the Banach-Alaoglu theorem, there exists $F\in L^\infty(\Omega\times[0,T]\times\mr^N\times\mr)$ such that, up to subsequences, 
\begin{equation}\label{weakstar}
F^\varepsilon\overset{w^*}{\longrightarrow}F\quad\text{in}\quad L^\infty(\Omega\times[0,T]\times\mr^N\times\mr).
\end{equation}
As a consequence,
$$\int_0^T\big\langle F^\varepsilon(t),\phi(\theta_t)\big\rangle\partial_t\alpha(t)\,\dif t\overset{w^*}{\longrightarrow} \int_0^T\big\langle F(t),\phi(\theta_t)\big\rangle\partial_t\alpha(t)\,\dif t\quad\text{in}\quad L^\infty(\Omega),$$
$$\frac{1}{2}\int_0^T\langle\partial_\xi( G^2\partial_\xi F^\varepsilon(t)), \phi(\theta_t)\rangle\alpha(t)\,\dif t\overset{w^*}{\longrightarrow}\frac{1}{2}\int_0^T\langle\partial_\xi( G^2\partial_\xi F(t)), \phi(\theta_t)\big\rangle\alpha(t)\,\dif t\quad\text{in}\quad L^\infty(\Omega),$$
According to the hypotheses on the initial data,
$$\big\langle F_0^\varepsilon,\phi\big\rangle\alpha(0)=\big\langle \ind_{u_0>\xi},\phi\big\rangle\alpha(0).$$
We intend to prove a similar convergence result for the stochastic term as well. Since
$$\big\langle F^\varepsilon(t),\partial_\xi\big(g\phi(\theta_t)\big)\big\rangle\alpha(t)\overset{w}{\longrightarrow}\big\langle F(t),\partial_\xi\big(g\phi(\theta_t)\big)\big\rangle\alpha(t)\quad \text{in} \quad L^2(\Omega\times [0,T])$$
and the stochastic integral $\varPhi\mapsto\int_0^T\varPhi\,\dd W$ regarded as bounded linear operator from $L^2(\Omega\times[0,T])$ to $L^2(\Omega)$ is weakly continuous it follows
$$\int_0^T\langle\partial_\xi F^\varepsilon(t) g,\phi(\theta_t)\rangle\alpha(t)\,\dif W(t)\overset{w}{\longrightarrow}\int_0^T\langle\partial_\xi F(t) g,\phi(\theta_t)\rangle\alpha(t)\,\dif W(t)\quad\text{in}\quad L^2(\Omega).$$

Furthermore, since we established convergence of all the terms in \eqref{formul} except for the third one on the right hand side, multiplying \eqref{formul} by $\varepsilon$ gives the convergence of this remaining term to $0$, that is,
\begin{equation}\label{klad}
\int_0^T\big\langle\ind_{u^\varepsilon(t)>\xi}-F^\varepsilon(t),\phi(\theta_t)\big\rangle\alpha(t)\,\dif t\overset{}{\longrightarrow} 0\quad\text{in}\quad L^2(\Omega).
\end{equation}
As the next step, we show that the same convergence holds true if we replace the test function $\phi(\theta)\alpha$ by a general function $\beta\in L^1([0,T]\times\R^N\times\R)$. To this end, recall that linear combinations of tensor functions of the form $\phi\alpha$, where $\phi\in C^2_c(\R^N\times\R)$, $\alpha\in C^1_c([0,T))$, are dense in $L^1([0,T]\times\R^N\times\R)$. Hence if $\beta\in L^1([0,T]\times\R^N\times\R)$ then there exists $\sum_{i=1}^n\phi_i\alpha_i$ with $\phi_i\in C^2_c(\mr^N\times\R)$ and $\alpha_i\in C^1_c([0,T))$, $i=1,\dots,n$, such that
$$\bigg\|\beta-\sum_{i=1}^n\phi_i\alpha_i\bigg\|_{L^1_{t,x,\xi}}<\delta$$
so, due to the fact that $\ind_{u^\varepsilon>\xi}-F^\varepsilon\in [-1,1]$, we obtain
\begin{align*}
\bigg|\int_0^T\big\langle\ind_{u^\varepsilon(t)>\xi}-F^\varepsilon(t),\beta(t,\theta_t)\big\rangle\,\dif t\bigg|&\leq \delta+\sum_{i=1}^n\bigg|\int_0^T\big\langle\ind_{u^\varepsilon(t)>\xi}-F^\varepsilon(t),\phi_i(\theta_t)\big\rangle\alpha_i(t)\,\dif t\bigg|
\end{align*}
thus
$$\int_0^T\big\langle\ind_{u^\varepsilon(t)>\xi}-F^\varepsilon(t),\beta(t,\theta_t)\big\rangle\,\dif t\overset{}{\longrightarrow} 0\quad\text{in}\quad L^2(\Omega).$$
Consequently, we may take $\beta(t,\pi_t)$ instead of $\beta$ to finally deduce
$$\ind_{u^\varepsilon>\xi}- F^\varepsilon\overset{w^*}{\longrightarrow} 0\quad\text{in}\quad L^\infty(\Omega\times[0,T]\times\mr^N\times\mr)$$
and, in particular, for all $\phi\in C^1_c(\R)$,\footnote{Here $\langle\cdot,\cdot\rangle_\xi$ denotes the duality between the space of distributions on $\R$ and $C^1_c(\R)$.}
\begin{equation}\label{lll}
\langle\partial_\xi\ind_{u^\varepsilon>\xi}-\partial_\xi F^\varepsilon,\phi\rangle_\xi\overset{w^*}{\longrightarrow} 0\quad\text{in}\quad L^\infty(\Omega\times[0,T]\times\mr^N).
\end{equation}
In order to obtain the convergence in the remaining term of \eqref{formul} and in view of the kinetic formulation \eqref{eq:weakkinformul}, we need to show that the term $\frac{1}{\varepsilon}(\ind_{u^\varepsilon>\xi}-F^\varepsilon)$ can be written as $\partial_\xi m^\varepsilon$ where $m^\varepsilon$ is a random nonnegative measure over $[0,T]\times\mr^N\times\mr$ bounded uniformly in $\varepsilon$. If we define
\begin{equation}\label{meas}
\begin{split}
m^\varepsilon(\xi)&=\frac{1}{\varepsilon}\int_{-\infty}^\xi \big(\ind_{u^\varepsilon>\zeta}-F^\varepsilon(\zeta)\big)\,\dif\zeta=\frac{1}{\varepsilon}\int_{-\infty}^\xi \big(\chi_{u^\varepsilon}(\zeta)-f^\varepsilon(\zeta)\big)\,\dif\zeta,
\end{split}
\end{equation}
it is easy to check that $m^\varepsilon\geq 0$ a.s. since $F^\varepsilon\in[0,1]$. Indeed, $m^\varepsilon(-\infty)=m^\varepsilon(\infty)=0$ and $m^\varepsilon(t,x,\cdot)$ is increasing if $\xi\in(-\infty,u^\varepsilon(t,x))$ and decreasing if $\xi\in(u^\varepsilon(t,x),\infty)$.

%
%
%

Let us proceed with a uniform estimate for $(u^\varepsilon)$ and $(m^\varepsilon)$.

\begin{prop}\label{densities}
The set of local densities $(u^\varepsilon)$ satisfies the uniform estimate
\begin{equation*}
\E\sup_{0\leq t\leq T}\|u^\varepsilon(t)\|^2_{L^1_x}\leq C\,\E\|u_0\|^2_{L^1_x}.
\end{equation*}

\begin{proof}
It follows from the definition of $u^\varepsilon$ in \eqref{dens} and from \eqref{eq:hsol} that
\begin{equation*}
\begin{split}
u^\varepsilon(t)&=\me^{-\frac{t}{\varepsilon}}\int_\mr\mathcal{S}(t,0)\chi_{u_0}\,\dif\xi+\frac{1}{\varepsilon}\int_0^t\me^{-\frac{t-s}{\varepsilon}}\int_\mr\mathcal{S}(t,s)\chi_{u^\varepsilon(s)}\,\dif\xi\,\dif s.
\end{split}
\end{equation*}
Let us now define the following auxiliary function
$$H(s)=\left|\int_\mr\mathcal{S}(t,s)\chi_{u^\varepsilon(s)}\,\dif\xi\right|.$$
Then
$$H(t)\leq \me^{-\frac{t}{\varepsilon}} H(0)+(1-\me^{-\frac{t}{\varepsilon}})\max_{0\leq s\leq t} H(s)$$
and we conclude that $H(t)\leq H(0),\,t\in[0,T]$. In order to estimate $H(0)$, we apply Corollary \ref{prop:update1} and obtain
\begin{equation*}
\begin{split}
\E\sup_{0\leq t\leq T}\|u^\varepsilon(t)\|^2_{L^1_x}&\leq \E\sup_{0\leq t\leq T}\big\|\mathcal{S}(t,0)\chi_{u_0}\big\|^2_{L^1_{x,\xi}}
\leq C\,\E\|u_0\|^2_{L^{1}_{x}}.
\end{split}
\end{equation*}
\end{proof}
\end{prop}

\begin{prop}\label{kin}
For all $t^*\in[0,T]$ it holds true that
\begin{equation}\label{kin1}
\stred\bigg|\int_{[0,t^*]\times\mr^N\times\mr}\,\dif m^\varepsilon(t,x,\xi)\bigg|^2+\stred\langle f^\varepsilon(t^*),\xi\rangle^2\leq C\big(\E\|u_0\|_{L^2_x}^4+\E\|u_0\|^2_{L^1_{x}}\big).
\end{equation}

\begin{proof}
Here we follow the ideas of \cite{gess}. Let $\{0=t_0<t_1<\cdots<t_n=t^*\}$ be a partition of $[0,t^*]$ with step size $h$, to be chosen below.
It follows immediately from the formula \eqref{eq:hsol} that for every $t_i\in[0,t^*)$, $t\mapsto f^\varepsilon(t_i+t)$ is a solution to \eqref{bgk4} on $[0,t^*-t_i]$ with the initial condition $f^\varepsilon(t_i)$ and therefore the corresponding version of \eqref{eq:bgkweak} holds true, namely, for all $\phi\in C^1_c(\mr^N\times\mr)$ and all $t\in[t_i,t_{i+1}]$
\begin{align*}
\dif \langle f^\varepsilon(t),\phi(\theta_{t_i,t})\rangle&=-\langle \partial_\xi f^\varepsilon g,\phi(\theta_{t_i,t})\rangle\,\dif W+\frac{1}{2}\langle\partial_\xi(G^2\partial_\xi f^\varepsilon),\phi(\theta_{t_i,t})\rangle\,\dif t+\langle\partial_\xi m^\varepsilon,\phi(\theta_{t_i,t})\rangle.
\end{align*}

Now, we need to test by $\phi(\xi)=\xi$. Since $f^\varepsilon\in L^\infty(0,T;L^1(\mr^N\times\mr))$ a.s., we can test by constants, in particular, we do not need compactly supported test functions. Therefore, let $\phi_R\in C^1(\mr)$ be an approximation of $\phi$ which is bounded, monotone increasing, i.e. $\partial_\xi\phi_R\geq0$, and preserves the sign, i.e. $\xi\sgn\phi_R(\xi)\geq 0$, and which satisfies $|\partial_\xi\phi_R|,|\partial^2_\xi\phi_R|\leq C$ uniformly in $R$. Using the weak formulation for $f^\varepsilon$ above we deduce
\begin{align*}
\int_{t_i}^{t}\langle m^\varepsilon,\partial_\xi\phi_R(\theta^0_{t_i,s})\rangle\,\dif s+\langle f^\varepsilon(t),\phi_R(\theta^0_{t_i,t})\rangle&=\langle f^\varepsilon(t_i),\phi_R\rangle-\int_{t_i}^{t}\langle\partial_\xi f^\varepsilon g,\phi_R(\theta^0_{t_i,s})\rangle\,\dif W_s\\
&\quad+\frac{1}{2}\int_{t_i}^{t}\langle\partial_\xi(G^2\partial_\xi f^\varepsilon),\phi(\theta^0_{t_i,s})\rangle\,\dif s.
\end{align*}
Observe that $0\leq \sgn(\xi) f^\varepsilon(\xi)\leq 1$ as a consequence of \eqref{eq:hsol}. Since also $\sgn(\xi)\theta^0_{t_i,t}(\xi)\geq 0$ due to the assumption \eqref{eq:null}, the second term on the left hand side is nonnegative. Moreover, on a small time interval $\partial_\xi\theta^0_{t_i,t}$ remains close to its initial value, that is, choosing $h$ sufficiently small (which can be justified using Theorem \ref{thm:existence}) we may assume that for every $i$
$$\inf_{t_i\leq t\leq t_{i+1}}\partial_\xi\theta_{t_i,t}^0\geq \frac{1}{2}$$
and consequently also the first term on the left hand side is nonnegative. Thus we deduce
\begin{align*}
\begin{aligned}
&\stred\bigg|\int_{t_i}^{t}\langle m^\varepsilon,(\partial_\xi\phi_R)(\theta^0_{t_i,s})\partial_\xi\theta^0_{t_i,s}\rangle\,\dif s\bigg|^2+\stred\langle f^\varepsilon(t),\phi_R(\theta^0_{t_i,t})\rangle^2\\
&\leq C\,\stred\langle f^\varepsilon(t_i),\phi_R\rangle^2+C\,\E\bigg|\int_{t_i}^{t}\langle\partial_\xi f^\varepsilon g,\phi_R(\theta^0_{t_i,s})\rangle\,\dif W_s\bigg|^2+C\,\E\bigg|\int_{t_i}^{t}\langle\partial_\xi(G^2\partial_\xi f^\varepsilon),\phi_R(\theta^0_{t_i,s})\rangle\,\dif s\bigg|^2.
\end{aligned}
\end{align*}
Moreover, it follows from \eqref{eq:hsol}, Lemma \ref{prop:update} and Proposition \ref{densities} that
\begin{align}\label{eq:9}
\stred\|f^\varepsilon(t)\|^2_{L^1_{x,\xi}}\leq \sup_{0\leq s\leq t\leq T}\E\|\mathcal{S}(t,s)\chi_{u^\varepsilon(s)}\|^2_{L^1_{x,\xi}}\leq C\,\E\|u_0\|^2_{L^1_{x}}
\end{align}
and therefore the third term on the right hand side can be estimated as follows
\begin{align*}
\E\bigg|\int_{t_i}^{t}\langle\partial_\xi(G^2\partial_\xi f^\varepsilon),\phi_R(\theta^0_{t_i,s})\rangle\,\dif s\bigg|^2&\leq C(t-t_i)^2\,\E\|u_0\|^2_{L^1_{x}}.
\end{align*}
where the constant $C$ does not depend on $R$ due to the assumption on the derivatives of $\phi_R$ above. For the stochastic integral we have
\begin{align*}
\E\bigg|\int_{t_i}^{t}\langle\partial_\xi f^\varepsilon g,\phi_R(\theta^0_{t_i,s})\rangle\,\dif W_s\bigg|^2&=\E\int_{t_i}^{t}\langle\partial_\xi f^\varepsilon g,\phi_R(\theta^0_{t_i,s})\rangle^2\,\dif s\\
&\leq C\,\E \int_{t_i}^t\langle f^\varepsilon,\phi_R(\theta^0_{t_i,s})\rangle^2\,\dif s+C\,\E\int_{t_i}^t \|f^\varepsilon\|_{L^1_{x,\xi}}^2\,\dif s\\
&\leq C\,\E \int_{t_i}^t\langle f^\varepsilon,\phi_R(\theta^0_{t_i,s})\rangle^2\,\dif s+C(t-t_i)\,\E\|u_0\|_{L^1_x}^2
\end{align*}
hence the Gronwall lemma yields
\begin{align}\label{eq:8a}
\begin{aligned}
&\stred\bigg|\int_{t_i}^{t}\langle m^\varepsilon,(\partial_\xi\phi_R)(\theta^0_{t_i,s})\partial_\xi\theta^0_{t_i,s}\rangle\,\dif s\bigg|^2+\stred\langle f^\varepsilon(t),\phi_R(\theta^0_{t_i,t})\rangle^2\leq C_h\Big(\stred\langle f^\varepsilon(t_i),\phi_R\rangle^2+\E\|u_0\|_{L^1_x}^2\Big).
\end{aligned}
\end{align}

Therefore, if $i=0$ then we estimate the first term on the right hand side of \eqref{eq:8a} by
\begin{align}\label{eq:100}
\stred\langle f^\varepsilon_0,\phi_R\rangle^2&\leq \E \langle\chi_{u_0},\xi\rangle^2=\frac{1}{4}\,\E\|u_0\|_{L^2_x}^4
\end{align}
and obtain by Fatou's lemma
\begin{align}\label{eq:10}
\begin{aligned}
\stred\bigg|\int_{0}^{t_{1}}&\langle m^\varepsilon,1\rangle\,\dif t\bigg|^2+\stred\langle f^\varepsilon(t_{1}),\theta^0_{0,t_{1}}\rangle^2\leq C_h\Big(\E\|u_0\|_{L^2_x}^4+\E\|u_0\|^2_{L^1_{x}}\Big).
\end{aligned}
\end{align}
In order to get a similar estimate on $[t_1,t_2]$ we go back to \eqref{eq:8a} and assume without loss of generality (using Theorem \ref{thm:existence} again) that $h$ was small enough so that for every $i$
$$\sup_{t_i\leq t\leq t_{i+1}}|\xi-\theta^0_{t_i,t}|\leq 1.$$
Consequently, by \eqref{eq:9} and \eqref{eq:10}
\begin{align*}
\stred\langle f^\varepsilon(t_1),\xi\rangle^2&\leq C\,\stred\langle f^\varepsilon(t_{1}),\theta^0_{0,t_{1}}\rangle^2+C\,\stred\langle f^\varepsilon(t_1),(\xi-\theta^0_{0,t_1})\rangle^2\leq C\big(\E\|u_0\|_{L^2_x}^4+\E\|u_0\|_{L^1_{x}}^2\big).
\end{align*}
Iterating the above technique finitely many times, the claim follows.
\end{proof}
\end{prop}

As a consequence of Proposition \ref{densities}, the assumptions of Lemma \ref{kinetcomp} are satisfied for $\nu^\varepsilon_{t,x}=\delta_{u^\varepsilon(t,x)=\xi}$ and hence there exists a Young measure $\nu_{t,x}$ vanishing at infinity such that $\nu^\varepsilon\rightarrow \nu$ in the sense given by this Lemma. We deduce from \eqref{lll} that $\partial_\xi F=-\nu$ hence $F$ is a kinetic function.

Next, we verify the second estimate from \eqref{integrov}.
Due to the definition of $m^\varepsilon$ in \eqref{meas}, it follows from \eqref{kin1} that
$$\E\bigg|\frac{1}{\varepsilon}\int_0^T\langle\chi_{u^\varepsilon(t)}-f^\varepsilon(t),\xi\rangle\,\dif t\bigg|^2+\E\langle f^\varepsilon(t),\xi\rangle^2\leq C\big(\E\|u_0\|_{L^2_x}^4+\E\|u_0\|^2_{L^1_x}\big)$$
which implies
\begin{equation}\label{eq:101}
\E\bigg|\int_0^T\langle\chi_{u^\varepsilon(t)},\xi\rangle\,\dif t\bigg|^2\leq C\big(\E\|u_0\|_{L^2_x}^4+\E\|u_0\|^2_{L^1_x}\big).
\end{equation}
Rewriting the left hand side by the same argument as in \eqref{eq:100} we deduce
$$\E\bigg|\int_0^T\|u^\varepsilon(t)\|_{L^2_x}^2\dif t\bigg|^2\leq C\big(\E\|u_0\|_{L^2_x}^4+\E\|u_0\|^2_{L^1_x}\big)$$
and as a consequence the estimate \eqref{eqqq} follows.

Finally, in order to show that $F$ is a generalized kinetic solution to \eqref{eq}, we will prove that there exists a kinetic measure $m$ such that, for all $\phi\in C^2_c(\R^N\times\R)$ and $\alpha\in C^1_c([0,T))$,
\begin{equation}\label{nino}
\int_0^T\big\langle\partial_\xi m^\varepsilon,\phi(\theta_t)\big\rangle\alpha(t)\,\dd t\overset{w}{\longrightarrow}\int_0^T\big\langle\partial_\xi m,\phi(\theta_t)\big\rangle\alpha(t)\,\dd t\quad\text{in}\quad L^1(\Omega).
\end{equation}
According to \eqref{kin1} and the fact that $m^\varepsilon\geq0$ a.s. we deduce that each $m^\varepsilon$ is a nonnegative finite measure over $[0,T]\times\R^N\times\R$ a.s. 
Besides, due to the convergence in \eqref{formul}, we obtain that for all $\phi\in C^2_c(\R^N\times\R)$ and all $\alpha\in C^1_c([0,T))$ the left hand side of \eqref{nino} indeed converges weakly in $L^1(\Omega)$ to some limit. Besides, due to Proposition \ref{kin}, the set of measures $(m^\varepsilon)$ is bounded in $L^2_w(\Omega;\mathcal{M}_b([0,T]\times\mr^N\times\mr))$, i.e. the space of weak-star measurable mappings from $\Omega$ to $\mathcal{M}_b([0,T]\times\mr^N\times\mr)$ with finite $L^2(\Omega;\mathcal{M}_b([0,T]\times\mr^N\times\mr))$-norm. Since $L^2_w(\Omega;\mathcal{M}_b([0,T]\times\mr^N\times\mr))$ is the dual of the separable space $L^2(\Omega;C_0([0,T]\times\mr^N\times\mr))$, the Banach-Alaoglu theorem applies and yields existence of $m\in L^2_w(\Omega;\mathcal{M}_b([0,T]\times\mr^N\times\mr))$ such that, up to a subsequence,
\begin{equation}\label{convm1}
m^\varepsilon\overset{w^*}{\longrightarrow}m\quad\text{in}\quad L^2_w(\Omega;\mathcal{M}_b([0,T]\times\mr^N\times\mr)).
\end{equation}
This in turn verifies the convergence (and identification of the limit) in \eqref{nino}. It remains to prove that $m$ is indeed a kinetic measure. Clearly, since all $m^\varepsilon$ are nonnegative, the same remains valid for $m$. The points (i) and (ii) from Definition \ref{def:kinmeasure} follow directly from the construction of $m$ and the uniform estimate \eqref{kin1}. The remaining point Definition \ref{def:kinmeasure}(iii) can be justified as follows. Let $\phi\in C_0(\R^N\times\R)$ and define
$$x^\varepsilon(t):=\int_{[0,t]\times\R^N\times\R}\phi(x,\xi)\dd m^\varepsilon(s,x,\xi).$$
If $\vartheta\in L^\infty(\Omega)$ and $\gamma\in L^\infty(0,T)$ then by Fubini's theorem
$$\E\bigg[\vartheta\int_0^T\gamma(t)x^\varepsilon(t)\dd t\bigg]=\E\bigg[\vartheta\int_{[0,T]\times\R^N\times\R}\phi(x,\xi)\varGamma(s)\dd m^\varepsilon(s,x,\xi)\bigg],$$
where $\varGamma(s)=\int_s^T\gamma(t)\dd t$ is continuous and $\varGamma(T)=0$. Since the right hand side converges to
$$\E\bigg[\vartheta\int_{[0,T]\times\R^N\times\R}\phi(x,\xi)\varGamma(s)\dd m(s,x,\xi)\bigg]$$
due to \eqref{convm1}, we may apply Fubini's theorem again to deduce that
$$x(t):=\int_{[0,t]\times\R^N\times\R}\phi(x,\xi)\dd m(s,x,\xi)$$
is a weak limit in $L^1(\Omega\times[0,T])$ of progressively measurable processes and is therefore also progressively measurable.

Altogether, we have proved that $m$ is a kinetic measure and $F$ is a generalized kinetic solution to \eqref{eq}. Since any generalized kinetic solution is actually a kinetic one, due to Reduction Theorem \ref{thm:reduction}, it follows that $F=\ind_{u>\xi}$ and $\nu=\delta_u$, where $u\in L^4(\Omega;L^2(0,T;L^2(\mr^N)))$ is the unique kinetic solution to \eqref{eq}.

The weak-star convergence of $f^\varepsilon$ to $\chi_u$ follows immediately from \eqref{weakstar} and therefore the proof is complete.
\end{proof}

\section*{Acknowledgment}

The author wishes to thank the anonymous referees for providing many useful suggestions.



\begin{thebibliography}{19}
\bibitem{BVW} C. Bauzet, G. Vallet, P. Wittbold, A degenerate parabolic-hyperbolic Cauchy problem with a stochastic force, Journal of Hyp. Diff. Eq. 12 (3) (2015) 501-533.
\bibitem{bauzet} C. Bauzet, G. Vallet, P. Wittbold, The Cauchy problem for conservation laws with a multiplicative noise, Journal of Hyp. Diff. Eq. 9 (4) (2012) 661-709.
\bibitem{vov1} F. Berthelin, J. Vovelle, A BGK approximation to scalar conservation laws with discontinuous flux, Proc. of the Royal Society of Edinburgh A 140 (5) (2010) 953-972.
\bibitem{car} J. Carrillo, Entropy solutions for nonlinear degenerate problems, Arch. Rational Mech. Anal. 147 (1999) 269-361.
\bibitem{caruana} M. Caruana, P. Friz, Partial differential equations driven by rough paths, Proc. Lond. Math. Soc. (3) 100 (2010), no. 1, 177-215.
\bibitem{cohn} D.L. Cohn, Measure Theory, Birkh\"auser, Boston, Basel, Stuttgart, 1980.
\bibitem{karlsen} C.Q. Chen, Q. Ding, K.H. Karlsen, On nonlinear stochastic balance laws, Arch. Rational Mech. Anal. 204 (2012) 707-743.
\bibitem{chen} G.\,Q. Chen, B. Perthame, Well-posedness for non-isotropic degenerate parabolic-hyperbolic equations, Ann. Inst. H. Poincar\'e Anal. Non Lin\'eaire 20 (4) (2003) 645-668.
\bibitem{crisan} D. Crisan, J. Diehl, P.\,K. Friz, and H. Oberhauser, Robust filtering: correlated noise and multidimensional
observation, Ann. Appl. Probab. 23 (2013), no. 5, 2139-2160.
\bibitem{degen2} A. Debussche, M. Hofmanov\'a, J. Vovelle, Degenerate parabolic stochastic partial differential equations: Quasilinear case, to appear in Ann. Probab., arXiv:1309.5817.
\bibitem{debus2} A. Debussche, J. Vovelle, Invariant measure of scalar first-order conservation laws with stochastic forcing, to appear in Probability Theory and Related Fields, arXiv:1310.3779.
\bibitem{debus} A. Debussche, J. Vovelle, Scalar conservation laws with stochastic forcing, J. Funct. Anal. 259 (2010) 1014-1042.
\bibitem{DV} A. Debussche, J. Vovelle, Scalar conservation laws with stochastic forcing, revised version, \url{http://math.univ-lyon1.fr/~vovelle/DebusscheVovelleRevised.pdf}.
\bibitem{DOR15} J. Diehl, H. Oberhauser, S. Riedel, A L\'evy area between Brownian motion and rough paths with applications to robust nonlinear filtering and rough partial differential equations, Stoch. Pr. and their Appl. 125 (2015) 161-181.
\bibitem{diperna} R.\,J. DiPerna, P.\,L. Lions, Ordinary differential equations, transport theory and Sobolev spaces, Invent. Math. 98 (1989) 511-547.
\bibitem{feng} J. Feng, D. Nualart, Stochastic scalar conservation laws, J. Funct. Anal. 255 (2) (2008) 313-373.
\bibitem{friz2} P.\,K. Friz, B. Gess, Stochastic scalar conservation laws driven by rough paths,  to appear in Ann. Inst. H. Poincar\'e Analyse Non Lin\'eaire, arXiv:1403.6785.
\bibitem{friz} P. Friz, N.\,B. Victoir, Multidimensional Stochastic Processes as Rough Paths, Theory and Applications, Cambridge Studies in Advanced Mathematics, vol. 120, Cambridge University Press, Cambridge, 2010.
\bibitem{GS} B. Gess, P.\,E. Souganidis, Long-time behavior, invariant measures and regularizing effects for stochastic scalar conservation laws, arXiv:1411.3939.
\bibitem{gess} B. Gess, P.\,E. Souganidis, Scalar conservation laws with multiple rough fluxes, to appear in Comm. in Mathematical Sciences, arXiv:1406.2978.
\bibitem{bgk} M. Hofmanov\'a, A Bhatnagar-Gross-Krook approximation to stochastic scalar conservation laws, arXiv:1305.6450, to appear in Ann. Inst. H. Poincar\'e Probab. Statist.
\bibitem{hof} M. Hofmanov\'a, Degenerate parabolic stochastic partial differential equations, Stoch. Pr. Appl. 123 (12) (2013) 4294-4336.
\bibitem{holden} H. Holden, N.\,H. Risebro, Conservation laws with a random source, Appl. Math. Optim. 36 (2) (1997) 229-241.
\bibitem{vov} C. Imbert, J. Vovelle, A kinetic formulation for multidimensional scalar conservation laws with boundary conditions and applications, SIAM J. Math. Anal. 36 (2004), no. 1, 214-232.
\bibitem{jakubow} A. Jakubowski, The almost sure Skorokhod representation for subsequences in nonmetric spaces, Teor. Veroyatnost. i Primenen 42 (1997), no. 1, 209-216; translation in Theory Probab. Appl. 42 (1997), no. 1, 167-174 (1998).
\bibitem{kavian} O. Kavian, Introduction \`a la th\'eorie des points critiques, Springer, 1993.
\bibitem{kim} J.\,U. Kim, On a stochastic scalar conservation law, Indiana Univ. Math. J. 52 (1) (2003) 227-256.
\bibitem{kruzk} S.\,N. Kru\v{z}kov, First order quasilinear equations with several independent variables, Mat. Sb. (N.S.) 81 (123) (1970) 228-255.
\bibitem{lieb} E.\,H. Lieb, M. Loss, Analysis, 2nd ed., AMS 2001.
\bibitem{lps} P.\,L. Lions, B. Perthame, P.\,E. Souganidis, Scalar conservation laws with rough (stochastic) fluxes, Stochastic Partial Differential Equations:
Analysis and Computations 1 (2013), no. 4,
664--686.

\bibitem{lps1} P.\,L. Lions, B. Perthame, P.\,E. Souganidis, Scalar conservation laws with rough (stochastic) fluxes; the spatially dependent case, arXiv:1403.4424.
\bibitem{lpt1} P.L. Lions, B. Perthame, E. Tadmor, Formulation cin\'etique des lois de conservation scalaires multidimensionnelles, C.R. Acad. Sci. Paris (1991) 97-102, S\'erie I.
\bibitem{lions} P.L. Lions, B. Perthame, E. Tadmor, A kinetic formulation of multidimensional scalar conservation laws and related equations, J. Amer. Math. Soc. 7 (1) (1994) 169-191.

\bibitem{lyons1} T.\,J. Lyons, M. Caruana, T. L\'evy, Differential equations driven by rough paths, Lectures
from the 34th Summer School on Probability Theory held in Saint Flour, July 6-24, 2004, Lecture Notes in Mathematics 1908, Springer, Berlin, 2007.
\bibitem{lyons2} T.\,J. Lyons, Z. Qian, System Control and Rough Paths, Oxford Mathematical Monographs, Oxford University Press, 2002.
\bibitem{malek} J. M\'alek, J. Ne\v{c}as, M. Rokyta, M. R\r{u}\v{z}i\v{c}ka, Weak and Measure-valued Solutions to Evolutionary PDEs, Chapman \& Hall, London, Weinheim, New York, 1996.

\bibitem{perth} B. Perthame, Kinetic Formulation of Conservation Laws, Oxford Lecture Ser. Math. Appl., vol. 21, Oxford University Press, Oxford, 2002.
\bibitem{tadmor} B. Perthame, E. Tadmor, A kinetic equation with kinetic entropy functions for scalar conservation laws, Comm. Math. Phys. 136 (3) (1991) 501-517.
\bibitem{protter} P.E. Protter, Stochastic Integration and Differential Equations, Springer, 2004.
\bibitem{revuz} D. Revuz and M. Yor, Continuous martingales and Brownian motion, third ed., Grundlehren der Mathematischen Wissenschaften, vol. 293, Springer-Verlag, Berlin, 1999.
\bibitem{stoica} B. Saussereau, I.\,L. Stoica, Scalar conservation laws with fractional stochastic forcing: Existence, uniqueness and invariant measure, Stoch. Pr. Ap. 122 (2012) 1456-1486.
\bibitem{wittbold} G. Vallet, P. Wittbold, On a stochastic first order hyperbolic equation in a bounded domain, Infin. Dimens. Anal. Quantum Probab. Relat. Top. 12 (4) (2009) 613-651.
\bibitem{young} L.\,C. Young, Lectures on the Calculus of Variations and Optimal Control Theory, Saunders, Philadelphia, 1969.
\end{thebibliography}
\end{document}